% !TeX spellcheck = en_US
\documentclass[a4paper,11pt]{article}
\usepackage[UKenglish]{babel}
\usepackage[utf8]{inputenc}
\usepackage{graphicx}
\usepackage{amsmath}
\usepackage{amsthm}
\usepackage{amsfonts}
\usepackage{graphicx}
\usepackage{dsfont}
\usepackage{color}
\usepackage{hyperref}
\usepackage[top=2cm, bottom=2cm, left=2cm, right=2cm]{geometry}
\usepackage{stmaryrd}
\usepackage{version}
\usepackage{accents}
\usepackage{enumitem}
\usepackage{url}

%---------------------------------------------------------
% Shortcuts
%---------------------------------------------------------

\newcommand{\N}{\mathbb{N}}

\newcommand{\R}{\mathbb{R}}

\newcommand{\Z}{\mathbb{Z}}
\newcommand{\F}{\mathcal{F}}
\newcommand{\T}{\mathcal{T}}
\newcommand{\Pro}{\mathbb{P}}
\newcommand{\Esp}{\mathbb{E}}
\newcommand{\Var}{\mathrm{Var}}
\newcommand{\Cov}{\mathrm{Cov}}
\renewcommand{\det}{\mathrm{det}}

\newcommand{\eps}{\varepsilon}

\newcommand{\Leb}{\mathrm{Leb}}
\renewcommand{\d}[1]{\ensuremath{\operatorname{d}\!{#1}}}
\newcommand{\bigo}{\mathrm{O}}
\newcommand{\ubar}[1]{\underaccent{\bar}{#1}}
\def\quotient#1#2{%
	\raise0.5ex\hbox{$#1$}\Big/\lower0.5ex\hbox{$#2$}%
}

%--------------------------------------------------
% Environements
%--------------------------------------------------

\theoremstyle{plain}
\newtheorem{prop}{Proposition}[section]
\newtheorem{theorem}[prop]{Theorem}
\newtheorem{coro}[prop]{Corollary}
\newtheorem{lem}[prop]{Lemma}

\theoremstyle{definition}
\newtheorem{defi}[prop]{Definition}

\theoremstyle{remark}
\newtheorem{rem}[prop]{Remark}

%--------------------------------------------------
% Titlepage
%--------------------------------------------------

\title{\LARGE \bf Hydrodynamic Limit of a $(2+1)$-Dimensional Crystal Growth
Model in the Anisotropic KPZ Class.}
\author{Vincent Lerouvillois }
\date{}

\newcommand{\Addresses}{{% additional braces for segregating \footnotesize
		\bigskip
		\footnotesize

		\textsc{Univ Lyon, Universit\'e Claude Bernard Lyon
			1, CNRS UMR 5208, Institut Camille Jordan, F-69622 Villeurbanne,
			France}\par\nopagebreak
		\textit{E-mail address}:
		\texttt{lerouvillois@math.univ-lyon1.fr}
}
}

%===================================================

\begin{document}
\maketitle

\begin{abstract}
	We study a model, introduced initially by Gates and Westcott
		\cite{gates1995stationary} to describe crystal growth evolution, which
		belongs to the Anisotropic KPZ
	universality class~\cite{prahofer1997exactly}. It can be thought of as a
	$(2+1)$-dimensional generalisation of the well known (1+1)-dimensional
	Polynuclear Growth Model (PNG). We show the full hydrodynamic limit
	of this process i.e the convergence of the random interface height
	profile after ballistic space-time scaling to the viscosity solution
	of a Hamilton-Jacobi PDE: $\partial_tu = v(\nabla u)$ with $v$ an
	explicit non-convex speed function. The convergence holds in the
	strong almost sure sense.
\end{abstract}

\section{Introduction}

Crystal growth belongs to a wider class of random
interface growth phenomena that appear naturally in
physics and biology \cite{barabasi1995fractal}. Trying to better
understand the behavior of these natural phenomena is a source of
interest in itself. On the other hand,  random growth models mainly
caught the attention of mathematicians in the last couple of decades
because of their conjectural universality properties and relation
with the KPZ
(Kardar-Parisi-Zhang) equation \cite{kardar_dynamic_1986} which
presumably encodes their long-time fluctuation behavior (see e.g
\cite{corwin2012kardar,ferrari2010random,quastel_introduction_2011}
for reviews on the topic in dimension $(1+1)$ and \cite{toninelli20172+} in
dimension $(2+1)$).

To fix ideas, the microscopic $d$-dimensional interface is typically
modelled by the graph of a discrete height function
$h: \Z^d \times \R_+ \to \Z$ (here, $\mathbb R_+$ represents the  time
variable) and evolves according to an asymmetric Markovian dynamic which is
often related to interacting particles systems. The transition rates are
assumed to
depend only on height gradients, so that the dynamics is invariant  by vertical
translations of the interface.  The first problem one
may address is the law of large numbers or \emph{hydrodynamic limit},
i.e the typical macroscopic behavior of the randomly evolving height function.
Under
space-time ballistic rescaling of the form
$n^{-1}h(\lfloor nx \rfloor,nt)$, the height profile  is expected to
converge to the solution of a first-order non-linear PDE of
Hamilton-Jacobi type:
\begin{equation}\label{eq:HJ-intro}
\partial_tu = v(\nabla u),
\end{equation}
where the growth velocity $v$ only depends on the slope and not
on $u$ itself since the model is vertically translation
invariant. Next, and more challengingly, comes the study of
\emph{fluctuations}, i.e the behavior of the discrete height function
around its hydrodynamic limit. The large-scale
fluctuations are expected to look qualitatively like the
solution of the KPZ equation and in particular share the same universal
characteristics exponents. Most results in this direction are established for
$d=1$. In dimension two, growth models are conjectured
\cite{wolf1991kinetic} to fall into two universality classes
depending on the convexity properties of $v$. When $v$ is strictly
convex (or
concave), we speak simply of KPZ universality class:
it is predicted and numerically observed that fluctuations grow
like $t^{\beta}$ with a universal exponent $\beta>0$ and
spatial fluctuations at equilibrium grow with a ``roughness
exponent'' $\alpha = 2\beta/(\beta+1)$. When the Hessian of $v$ has
signature $(+,-)$ the model is conjectured to belong to the so-called
\emph{Anisotropic} KPZ (AKPZ) class where spatial and temporal fluctuations
are expected to grow logarithmically and  spatial fluctuations to scale
to a Gaussian Free Field, as is the case for the stochastic
heat equation with additive noise. One says that the non-linearity in
the KPZ equation is
\emph{irrelevant} in the AKPZ regime and
\emph{relevant} in the KPZ one.

The model we are considering in this paper was introduced by
Gates and Westcott in \cite{gates1995stationary} to describe crystal
growth evolution and its stationary states. The interface can be
described by a height function $h :\left(\R \times \Z\right) \times \R_+ \to
\Z$,
semi-discrete in space and continuous in time, whose level lines are
piece-wise constant functions with $\pm1$ jumps. Even if we adopt a
different viewpoint, the Gates-Westcott dynamic can be viewed as a
multi-line generalisation of the PNG dynamic where each level line
follows simultaneously the PNG dynamic with ``kink/antikink creations''
suppressed
whenever two lines intersect. Although the PNG is a solvable model
that can be mapped to the problem of the longest increasing subsequence of a
random permutation, to
random polymers and to random matrix
ensembles (see \cite{ferrari2005one} for a nice review on the topic),
the Gates-Westcott dynamics induces non-trivial  interaction among level lines,
which makes the model harder to analyse. In
\cite{prahofer1997exactly}, Prähofer and Spohn identified a
slope-dependent family of stationary distributions for the
dynamic restricted to a bi-dimensional torus (note that Gates
and Westcott already computed equilibrium measures in
\cite{gates1995stationary} but only for a one-dimensional subset
of slopes $\rho$). In a certain thermodynamic limit of
large torus, they were able
to compute the slope-dependent growth velocity $v(\rho)$ at
stationarity. This
is the natural candidate for the speed function $v(\rho)$ in
\eqref{eq:HJ-intro}. As expected, the Hessian of $v$ has signature
$(+,-)$ everywhere so the model belongs to the AKPZ
universality class. The authors of \cite{prahofer1997exactly}
also showed that the spatial fluctuations at equilibrium are of
logarithmic order with respect to the distance between
points; this is typical of the two-dimensional
Gaussian Free Field. However, they didn't treat the temporal
fluctuations (also expected to grow logarithmically). Our contribution
to the study of the model is the rigorous proof of the hydrodynamic
limit starting from arbitrary initial condition. As an intermediate step, we
also get a logarithmic upper bound on fluctuation growth w.r.t. time in the
stationary states (see Lemma~\ref{lem:varh0}).

In the literature, most results about hydrodynamic limits in multi-dimensional spaces are given
for convex velocities $v$, where the viscosity solution of
\eqref{eq:HJ-intro} can be expressed in terms of the
variational Hopf-Lax formula. The strategy is to show that the
discrete height function enjoys a variational formula (sometimes
called "envelope property") at the microscopic level, which
passes to the limit thanks to the sub-additive ergodic
theorem. This applies e.g to the Corner Growth Model
\cite[Section 9]{seppalainen2008translation}, Ballistic deposition
\cite{seppalainen2000strong} and a wider family of grows models on
$\Z^d$ \cite{rezakhanlou2002continuum}, and yields existence of
such a hydrodynamic limit without providing an explicit
expression of the speed function $v$. The function $v$
can be explicitly identified when equilibrium measures are known, as
is the case for various one-dimensional models, such as ASEP and
PNG. For two-dimensional models in the AKPZ class, such envelope property
and Hopf-Lax
	formula cannot hold, otherwise the speed function in the
hydrodynamic limit would automatically result to be convex.

In his seminal article \cite{rezakhanlou2001continuum}, Rezakhanlou
introduced a different approach to hydrodynamic limit for growth
processes based on a compactness argument and on a list of conditions
that allow to identify any limit point with the unique viscosity
solution of \eqref{eq:HJ-intro}. This method does not require
convexity of $v$, but the only examples for which a full hydrodynamic
limit was proved in \cite{rezakhanlou2001continuum} are
one-dimensional where the structure of ergodic translation invariant
stationary measures is better understood. For $d \geq 2$, only a
partial result was obtained, namely, that any limit in distribution of
the rescaled height profile is concentrated on a set of viscosity
solutions of Hamilton-Jacobi equations with a possibly random speed
function. However, a precise description of equilibrum measures is
available for some of these models (e.g the Gates-Westcott model
\cite{prahofer1997exactly} and models related to the two-dimensional
dimer model
\cite{ferrari2008anisotropic,borodin2014anisotropic,corwin2016stationary,toninellistat,chhita20192+}
where the stationary measures are given by translation invariant Gibbs
measures on perfect matchings \cite{kenyon2006dimers}). Inspired by
Rezakhanlou's
technique, Zhang obtained the first full hydrodynamic limit
\cite{zhang2018domino} for a $(2+1)$-dimensional growth
model. Specifically, he considered the dimer shuffling-algorithm,
whose stationary distributions are given by weighted random dimer
configurations on $\Z^2$. Let us also mention the works
\cite{borodin2014anisotropic,legras2017hydrodynamic} about a long jump
two-dimensional interlaced particle dynamic generalising the
Hammersley process. In \cite{borodin2014anisotropic}, the authors
showed the hydrodynamic limit starting from a very specific initial
condition (with a CLT for temporal fluctuations on scale $\log t$)
while in \cite{legras2017hydrodynamic}, the authors proved the
hydrodynamic limit either up to the first time when a shock appears,
or under the assumption of a convex initial profile
\cite{evans2014envelopes}.

The present article follows the main idea of
\cite{rezakhanlou2001continuum,zhang2018domino} in terms of proof structure.
The idea consists in constructing a sequence (labeled by the parameter $n$
associated to the ballistic rescaling)
of discrete random semi-groups associated to the rescaled microscopic
dynamic, showing compactness in some sense and identifying
the limiting continuous semi-group with the one associated with the unique
viscosity solution of the PDE. The identification relies both on the
sufficient conditions given in
\cite{rezakhanlou2001continuum} (summarised in Proposition
\ref{prop:condaxiom})
and on a precise analysis of the stationary processes. With respect to
\cite{rezakhanlou2001continuum,zhang2018domino}, non-trivial additional
difficulties we had to overcome in the proof of compactness are related to the
semi-continuous character of the model and to  unboundedness of the  slopes and
of the speed function. In particular, we had to control the evolution of
spatial
gradients (Proposition \ref{prop:difhGW}) while this was trivial in
\cite{rezakhanlou2001continuum,zhang2018domino} since gradients are bounded. To
do so, we related the height function along the first coordinate to the PNG
with
a random subset of Poissonian creations and used a representation in terms of
random directed polymers. Also, instead of showing tightness of probability
measures like in \cite{rezakhanlou2001continuum,zhang2018domino}, we showed
that,
for a certain topology, the sequence of random semi-groups is almost surely
contained in a (random) compact set and then proved almost sure uniqueness of
the possible sub-sequential limits. Let us emphasize that the
hydrodynamic limit we obtained is in the strong sense of almost sure
convergence (on an underlying probability space determined by the
Poissonian clocks).

The article is structured as follows. The Gates-Westcott model is
introduced in Section 2: we define the state space of admissible
height functions and its dynamic via a Poisson Point Process on
$\R \times \Z \times \R_+$ representing space-time locations of kink-antikink
creations. In section
3, we start by stating the main result: the hydrodynamic limit for the
height function (Theorem \ref{theo:principalGW}). Then, we remind
elements of Hamilton-Jacobi PDE theory and useful results on
equilibrium measures. The rest of the article is dedicated to the
proof of the main theorem (the strategy of the proof is briefly explained at
the end of Section 3). In Section 4, we first show elementary facts
about the microscopic dynamic and a fundamental property of locality
(Corollary \ref{coro:asymlocal}) and then construct the sequence
of random discrete semi-group mentioned above. Section 5 is about proving
compactness. A key step in this
proof is the control of random spatio-temporal gradients (Propositions
\ref{prop:controlsgrowth} and
\ref{prop:difhGW}). Then, we apply a Arzelà-Ascoli type theorem (Proposition
\ref{prop:ascoli}) and
show compactness of the sequence of discrete semi-groups. Finally, in
Section 6, we identify the limit points as the semi-group associated with the
unique viscosity solution of
\eqref{eq:hamilton-jacobiGW} thanks to
Proposition \ref{prop:condaxiom} and the results about equilibrium
measures.

\section{The Gates-Westcott model}

\subsection{Height function}

In this model, the surface will be described by a discrete height function $\varphi : \R \times \Z \to \Z$ which lives in the state space given as follows:
\begin{defi}\label{defi:statespace}
Let $\Gamma$ be the set of functions $h : \R \times \Z \to \Z$ satisfying the following two conditions:
\begin{enumerate}
\item For any $y \in \Z$, $x \mapsto h(x,y)$ is piece-wise constant with
a locally finite number of  $\pm 1$-valued jumps. By convention, we impose that the values
at discontinuity points make the function upper semi-continuous.
\item For any $x \in \R$, $h(x,y+1) - h(x,y) \in \{-1, 0\}$.
\end{enumerate}
\end{defi}

\noindent Because of condition 1, the discontinuities along direction $x$ can be of three different types:
\begin{itemize}
	\item \emph{kink}: $h(x,y) = h(x^-,y) = h(x^+,y) + 1$
\item \emph{antikink}: $h(x,y) = h(x^-,y) + 1 = h(x^+,y)$
\item \emph{kink-antikink pair}: $h(x,y) = h(x^-,y) + 1 = h(x^+,y) + 1$.
\end{itemize}

A height function looks like a stack of terraces seen from a plane
(see Figure \ref{fig:GW}), the edges of each terrace along the $x$
direction corresponding to the kinks and antikinks of the height
function.  Due to the first condition in Definition
\ref{defi:statespace}, each function $h(\cdot,y)$ is entirely
determined by the position of its kinks and antikinks and its height
at any point $x_0 \in \R$. In other words, the kinks and antikinks
define the variations of the height function along the $x$ direction.

\begin{rem}
  In the article \cite{gates1995stationary} of Gates and Westcott,
  condition 2 was replaced by the height function being integer-valued and
  non-decreasing
  along the $y$ direction so that arbitrary slopes could be allowed
  (which is physically more realistic). However, there exists a
  one-to-one correspondence between height functions according to
  these two definition variants, as explained in
  \cite[p.91]{prahoferthesis}.
\end{rem}

\subsection{Dynamic}

Let $\omega$ be a Poisson Point Process of intensity $2$ on
$\R \times \Z \times \R_+$ seen as a random, locally finite, set of points
in $\R \times \Z \times \R_+$ that will be called
\emph{creations}. Starting from a configuration in the state space
$\Gamma$, the Gates-Westcott dynamic is defined by three rules: the
first two are deterministic while the last one is random.
\begin{itemize}
\item \emph{Lateral Expansion} : each terrace exands laterally at
  speed $1$, i.e. each kink (resp. antikink) of the height function is
  moved at speed $+1$ (resp $-1$) along the $x$ direction.
\item \emph{Annihilation} : whenever two terraces meet, they merge,
  i.e. whenever a kink and an antikink meet, they annihilate each
  other.
\item \emph{Creation} : If $(x,y,t) \in \omega$, then the height $h$
  at $(x,y)$ increases by one at time $t$ if the height function
  obtained remains in $\Gamma$.  In other words, a kink-antikink pair
  is created at time $t$ and at space position $(x,y)$ if the height
  function remains in $\Gamma$ after the transition, i.e. if
  $h(x,y-1,t^-) - h(x,y,t^-) = 1$, $h(x,y,t^-) - h(x,y+1,t^-) = 0$ and
  if there is no preexisting discontinuity of  $h(\cdot,y,t^-)$ at $x$.  Note
  that the last condition is verified
  with probability $1$,  since the discontinuities
  are locally finite hence countable for any function in~$\Gamma$.
\end{itemize}

\begin{figure}[htbp]
\begin{center}
\includegraphics[scale=0.8]{./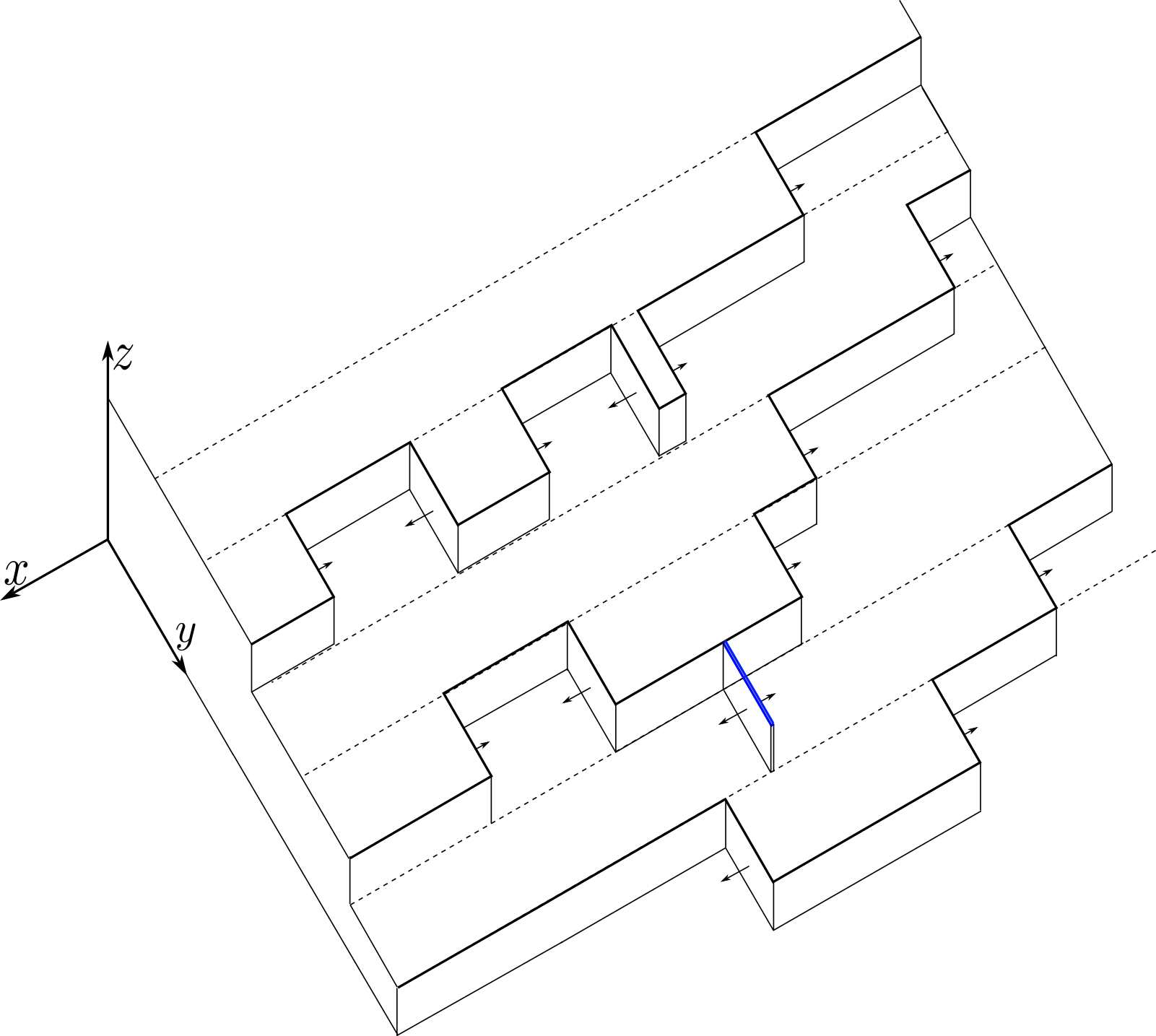}
\end{center}
\vspace{-0.5cm}
\caption{ \footnotesize A section of a height function. The lateral expansion is indicated by arrows. A newly created terrace expansion is shown in blue. }
\label{fig:GW}
\end{figure}

\begin{rem}\label{rem:welldefined}
		As usual in interacting particle systems, some care has to
		be taken to ensure that the process is well defined on the
		infinite lattice. If we worked in a finite domain, there would be
		a finite number of creations in finite time intervals and we could
		know the height function deterministically up to the first time of
		creation, determine whether this creation occurs or not and repeat
		the procedure inductively on the number of creations. On the
		infinite lattice it makes no sense to look at ``the first
		creation'' but existence and locality of the dynamics can be
		proven by a modification of the classical disagreement percolation argument
		\cite[Sec. 3.2]{martinelli1999lectures} used for Glauber dynamics on
		infinite graphs. Namely, suppose we want to determine the
		evolution of $h(x,y,t)$ for all
		$(x,y,t)\in [a,b]\times \llbracket c, d \rrbracket \times [0,T]$. Since kinks/antikinks move with
		speed $1$, we see that creations whose $x$-coordinate is outside
		$[a-T,b+T]$ do
		not matter. Also, let  $y^-$ (resp. $y^+$) be the largest integer smaller
		than $c$ (resp. the smallest integer larger than $d$)
		such that there are no creations at
		$(x,y,t)\in [a-T,b+T]\times \{y^\pm\} \times [0,T]$, then the creations
		that happen for $y>y^+$ or $y<y^-$ also do not matter and then
		the evolution of $h(x,y,t)$ for all $(x,y,t)\in [a,b]\times \llbracket c, d \rrbracket \times [0,T]$ is
		determined by the finitely many creations in the bounded domain
		$(x,y,t)\in [a-T,b+T]\times \llbracket y^-,y^+\rrbracket\times
		[0,T]$.Finally, for any $a<b \in \R$ and $T>0$, the random variables $y^\pm$ are almost surely finite.
		Later (cf. Proposition \ref{prop:prolinGW}) we will prove a more quantitative
		locality statement: the height at a
		point up to time $T$ is determined by creations that occur in a domain that, with
		high probability, grows linearly with $T$.
\end{rem}

\begin{rem}\label{rem:lines} The Gates-Westcott model can be equivalently
described in
  terms of level lines of the height function (i.e. the bold lines
  drawn by the terraces edges seen from above in figure
  \ref{fig:GW}) as explained  in
  \cite{gates1995stationary,prahofer1997exactly,prahoferthesis}. From
  this point of view, the dynamic is nothing but the Polynuclear
  Growth (PNG) Model dynamic \cite{ferrari2005one} applied simultaneously to each level line, creations being  suppressed whenever two lines intersect.
\end{rem}

\section{The main result}

\subsection{Hydrodynamic limit}

First of all, let us introduce a few definitions and notations.  We
denote by $\Omega$ the set of locally finite subsets of
$\R \times \Z \times \R_+$ endowed with the $\sigma$-algebra and the
probability measure induced by a Poisson Point Process of intensity
$2$ on $\R \times \Z \times \R_+$. For all $\omega \in \Omega$, for all
admissible height function $\varphi \in \Gamma$ and for all
$(x,y,t) \in \R \times \Z \times \R_+$, we define
\begin{equation}
 h(x,y,t;\varphi,\omega)
\end{equation}
as the height function at time $t$ obtained by applying the
Gates-Westcott dynamic described in the previous section with initial
height profile $\varphi$ and creations $\omega$. Let us also define
the continuous state-space
\begin{equation}\label{eq:gammabar}
\bar{\Gamma} := \left\{f \in \mathcal{C}\left(\R^2\right), \ \forall x \in \R,
\ \forall y_1 \leq y_2 \in \R, \ f(x,y_2) - f(x,y_1) \in [-(y_2-y_1),0] \right\}.
\end{equation}
Notice that a continuously differentiable function on $\R^2$ is in
$\bar{\Gamma}$ if and only if its gradient takes values in
$\R \times [-1,0]$.
\begin{theorem}\label{theo:principalGW}
  Let $(\varphi_n)_{n \in \N} \in \Gamma^{\N}$ be a sequence of
  admissible initial height functions approaching a continuous
  function $f \in \bar{\Gamma}$ in the following sense:
\begin{equation}\label{eq:hypcondinitialGW}
\forall R>0 \qquad \sup_{|x|,|y| \leq R} \left| \frac{1}{n} \varphi_n(nx,\lfloor ny \rfloor) - f(x,y) \right| \underset{n \to \infty}{\longrightarrow} 0 \ .
\end{equation}

\noindent Then, for almost all $\omega$ in $\Omega$,

\begin{equation}\label{eq:limitehydroGW}
\forall T>0 \quad \forall R>0 \qquad  \sup_{|x|,|y| \leq R, t \in [0,T]}
\left| \frac{1}{n} h(nx,\lfloor ny \rfloor , nt;\varphi_n,\omega) - u(x,y,t)
\right| \underset{n \to \infty}{\longrightarrow} 0 \ ,
\end{equation}
where $u$ is the unique viscosity solution of the Hamilton-Jacobi equation
\begin{equation}\label{eq:hamilton-jacobiGW}
\left\{
\begin{aligned}
\partial_t u  & =  v( \nabla u) \\
u(\cdot,\cdot,0) & = f \ ,
\end{aligned}
\right.
\end{equation}
with
\begin{equation}\label{eq:speedfunction}
v(\rho_1,\rho_2)= \frac{1}{\pi} \sqrt{ \pi^2 \rho_1^2 + 4 \sin^2(\pi \rho_2)} \ .
\end{equation}
\end{theorem}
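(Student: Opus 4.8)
The plan is to follow the Rezakhanlou–Zhang strategy outlined in the introduction: realise the rescaled height functions as a sequence of random discrete semigroups, prove that this sequence almost surely lies in a compact set for a suitable topology, extract subsequential limits, and identify every limit point with the semigroup of the unique viscosity solution of \eqref{eq:hamilton-jacobiGW}. Concretely, I would first fix the probability space of Poissonian creations $\omega$ once and for all, and for each $n$ define the scaled height $h_n(x,y,t;\varphi) := \tfrac1n h(nx,\lfloor ny\rfloor,nt;\varphi,\omega)$, together with the associated evolution operators. Using the locality property (Corollary \ref{coro:asymlocal} / Proposition \ref{prop:prolinGW}) one controls, with high probability and uniformly on compacts, the speed at which information propagates, so that $h_n$ is determined by creations in a region growing linearly in $t$. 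This is the ingredient that upgrades tightness in distribution to an almost-sure statement: on a full-measure event the relevant creation configuration is ``tame'' on every compact space-time box.

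The second block is compactness. Here the main work — and, I expect, the main obstacle — is the control of spatio-temporal gradients of $h_n$, since unlike in \cite{rezakhanlou2001continuum,zhang2018domino} the slopes of the interface are unbounded. Temporal regularity (an equicontinuity estimate in $t$) should follow from a bound on the growth speed, i.e.\ from Proposition \ref{prop:controlsgrowth}: the number of creations that can affect a given column in time $nt$ is essentially Poisson with linear mean, giving $|h_n(x,y,t)-h_n(x,y,s)|\le C|t-s|$ up to lower-order fluctuations. The delicate part is spatial regularity in the $x$-direction, where gradients are genuinely unbounded; the idea is the one flagged in the introduction, namely to relate $x\mapsto h(\cdot,y,t)$ to a PNG interface driven by a Poissonian subset of creations and then to invoke the last-passage/directed-polymer representation to bound increments $|h_n(x,y,t)-h_n(x',y,t)|$ in terms of $|x-x'|$ plus a controllable error (Proposition \ref{prop:difhGW}). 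In the $y$-direction condition 2 of Definition \ref{defi:statespace} already gives a deterministic Lipschitz bound with constant $1$. Combining these with the Arzelà–Ascoli-type statement (Proposition \ref{prop:ascoli}) yields, almost surely, that $(h_n)$ lives in a compact subset of $C(\R^2\times\R_+)$ with the topology of uniform convergence on compacts, so subsequential limits $u$ exist; and since the initial data converge by \eqref{eq:hypcondinitialGW}, any such $u$ satisfies $u(\cdot,\cdot,0)=f$.

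The third block is identification of the limit. I would verify that any subsequential limit $u$ satisfies the list of sufficient conditions of Proposition \ref{prop:condaxiom}: it is in $\bar\Gamma$, it has the semigroup/monotonicity/translation-invariance properties inherited from the microscopic dynamics, and — the crucial analytic input — when the initial profile is an affine function of slope $\rho=(\rho_1,\rho_2)$ the limit is $u(x,y,t)=\rho_1x+\rho_2y+v(\rho)t$ with $v$ exactly \eqref{eq:speedfunction}. This last point is where the precise description of the stationary measures enters: starting $\varphi_n$ from (a lattice approximation of) slope $\rho$, one puts the dynamics in its Prähofer–Spohn slope-$\rho$ stationary state, uses the logarithmic bound on temporal fluctuations in equilibrium (Lemma \ref{lem:varh0}) together with a Borel–Cantelli/ergodic argument to show $\tfrac1n h(0,0,nt)\to v(\rho)t$ almost surely, and reads off that the growth velocity is the function computed in \cite{prahofer1997exactly}. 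Proposition \ref{prop:condaxiom} then forces $u$ to coincide with the unique viscosity solution of \eqref{eq:hamilton-jacobiGW}; since this holds for every subsequential limit, the whole sequence converges, giving \eqref{eq:limitehydroGW}. The uniqueness of viscosity solutions for $\partial_t u=v(\nabla u)$ with continuous $v$ and $\bar\Gamma$-valued data is standard Hamilton–Jacobi theory and will be quoted.

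I expect the spatial-gradient control in the $x$-direction (Proposition \ref{prop:difhGW}) to be the hardest single step, because it is precisely the place where unboundedness of the slopes breaks the arguments of the earlier papers and forces the detour through the PNG/directed-polymer representation; the conversion of tightness into almost-sure compactness via locality is a close second in terms of technical care.
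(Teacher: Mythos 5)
Your proposal follows essentially the same route as the paper: the same three-block structure (random discrete semigroups plus locality, compactness via the spatio-temporal gradient controls of Propositions \ref{prop:controlsgrowth} and \ref{prop:difhGW} and the Arzel\`a--Ascoli-type Proposition \ref{prop:ascoli}, identification via Proposition \ref{prop:condaxiom} with the linear-profile compatibility extracted from the Pr\"ahofer--Spohn stationary measures and the variance bounds of Lemmas \ref{lem:varh0} and \ref{lem:uniformequilibrium}). The only slight imprecision is the heuristic for temporal regularity: the bound $|h_n(x,y,t)-h_n(x,y,s)|\lesssim |t-s|$ comes not from a Poisson count on a column but from the longest-increasing-subsequence (Ulam/Hammersley) estimate applied to a light-rectangle of area of order $(t-s)^2$, which is the same mechanism as the spatial control.
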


\begin{rem}
	For any continuous function $f \in \bar{\Gamma}$, we can always find a
	sequence of functions $\varphi_n \in \Gamma$ approaching $f$ in the sense
	of
	\eqref{eq:hypcondinitialGW} as we will show later in Proposition
	\ref{prop:defphiGW}.
\end{rem}

Before proving this theorem, we will remind the definition of the
viscosity solution of Hamilton-Jacobi equations in the next section,
explain why it is unique and state sufficient conditions to identify
it. In Section \ref{sec:equilibrium}, we will present useful results
about equilibrium measures taken from
\cite{prahofer1997exactly,prahoferthesis}, where the speed function in
\eqref{eq:speedfunction}  is also computed.

\subsection{Viscosity solutions of Hamilton-Jacobi equations}

In this section, we briefly recall some elements of the theory of
Hamilton-Jacobi Partial Differential Equations. In order to show
Theorem \ref{theo:principalGW}, all we need to know about viscosity
solutions is gathered in Theorem \ref{theo:unique} and in Proposition
\ref{prop:condaxiom}. The interested reader can find more background
and motivations about Hamilton-Jacobi equations in the monography
\cite{evans2010partial} for instance.

Given $f, v \in \mathcal{C}(\R^d)$, we consider the following first order PDE:
\begin{equation}\label{eq:H-J}
\left\{
\begin{array}{rcll}
\partial_t u & = & v(\nabla u) & \text{on } \R^d \times (0, +\infty)\\
u(\cdot,0) & = & f & \text{on } \R^d.
\end{array}
\right.
\end{equation}
\noindent Under some further regularity conditions on $v$ and $f$, it
is possible to apply the method of characteristics to obtain a local
classical solution. In general, whatever the regularity of $v$ and
$f$, shocks for $\nabla u$ appear in finite time and the solution is
no more differentiable. In order to give a definition of solution that
is global in time, we introduce the classical concept of \emph{viscosity
solution} that
guarantees existence and uniqueness under suitable assumptions.

\begin{defi}\label{defi:solvisc}
	We say that $u : \R^d \times [0,T] \to \R$ is a \emph{viscosity solution}
	of
	\eqref{eq:H-J} on $\R^d\times [0,T]$ if~$u$ is continuous, $u(.,0) = f$ and
	$u$ is
	both a subsolution and a supersolution.

	A function $u$ is a \emph{subsolution} (respectively a
	\emph{supersolution}) if for all~$\phi \in
	\mathcal{C}^{\infty}(\R^d \times(0,T))$ and all
		$(x_0,t_0) \in \R^d \times (0,T)$ such that $\phi(x_0,t_0) =
		u(x_0,t_0)$
		and $\phi \geq u$ (resp. $\phi \leq u$) on a neighbourhood of
		$(x_0,t_0)$, the following inequality holds:
		\begin{equation}
		\begin{aligned}
		\partial_t \phi(x_0,t_0) &\leq v(\nabla \phi(x_0,t_0))\\
			(\text{resp. } \partial_t \phi(x_0,t_0) &\geq v(\nabla
			\phi(x_0,t_0)) \ ).
		\end{aligned}
		\end{equation}

\end{defi}

We won't address the question of general existence of viscosity
solutions because, in our case, we will show existence by proving that the hydrodynamic limit is indeed a solution of \eqref{eq:hamilton-jacobiGW}. However, a result of uniqueness will be needed to identify the potential limit points. The following Theorem shown by Ishii can be obtained as a corollary
of  \cite[Th. 2.5]{ishii1984uniqueness}.
\begin{theorem}\label{theo:unique}
  If $v$ is globally Lipschitz, there is at most one viscosity
  solution of~\eqref{eq:H-J} on~$\R^d\times[0,T]$.
\end{theorem}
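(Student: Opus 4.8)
The plan is to deduce the statement from the comparison principle for viscosity solutions of \eqref{eq:H-J}, which I would establish by the classical Crandall--Ishii--Lions doubling-of-variables argument; as the text already indicates, once the global Lipschitz continuity of $v$ is recognised as the structural hypothesis in play, this is exactly \cite[Th.~2.5]{ishii1984uniqueness}, so for the write-up it is enough to check its hypotheses and invoke it. The self-contained route is as follows. First, reduce uniqueness to the comparison statement: if $u$ is a viscosity subsolution and $w$ a viscosity supersolution of \eqref{eq:H-J} on $\R^d\times[0,T]$ with $u(\cdot,0)\le w(\cdot,0)$, then $u\le w$ on $\R^d\times[0,T]$. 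Applying this to the pair $(u_1,u_2)$ and then to $(u_2,u_1)$ forces $u_1=u_2$ for any two viscosity solutions with the same initial data $f$. (The solutions that must be compared in the application are Lipschitz --- they are the hydrodynamic limits constructed later, whose gradients are controlled --- so they are uniformly continuous and $u-w$ is bounded, which is all that is used below.)

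To prove the comparison statement, suppose for contradiction that $\sigma:=\sup_{\R^d\times[0,T]}(u-w)>0$. Fix $\eta>0$, then $\eps>0$ small (depending on $\eta$ and $\sigma$), and on $(\R^d\times[0,T])^2$ consider
\[
\Phi(x,t,y,s)=u(x,t)-w(y,s)-\frac{|x-y|^2}{2\eps}-\frac{(t-s)^2}{2\eps}-\alpha(|x|^2+|y|^2)-\frac{\eta}{T-t}.
\]
For every $\alpha>0$ the term $-\alpha(|x|^2+|y|^2)$ makes $\Phi$ coercive (here one uses that $u-w$ is bounded and $w$ uniformly continuous), so $\Phi$ attains its maximum at some $(\bar x,\bar t,\bar y,\bar s)$; the quadratic penalties in $\eps$ force $\bar x$ close to $\bar y$ and $\bar t$ close to $\bar s$; the term $\eta/(T-t)$ makes $u$ a strict subsolution and keeps $\bar t$ bounded away from $T$; and, since $\sigma>0$ and $\eps$ is small, $u(\cdot,0)\le w(\cdot,0)$ rules out $\bar t=0$ and $\bar s=0$, so that $\bar t,\bar s\in(0,T)$. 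Comparing the value of $\Phi$ at the maximiser with its value at a fixed reference point bounds $\alpha(|\bar x|^2+|\bar y|^2)$ by a constant independent of $\alpha$, whence $\alpha|\bar x|+\alpha|\bar y|=O(\sqrt{\alpha})\to0$ as $\alpha\to0$ with $\eps,\eta$ fixed. Writing $p_\eps:=(\bar x-\bar y)/\eps$, I would then feed the smooth function touching $u$ from above at $(\bar x,\bar t)$ into the subsolution inequality and the one touching $w$ from below at $(\bar y,\bar s)$ into the supersolution inequality, obtaining
\[
\frac{\bar t-\bar s}{\eps}+\frac{\eta}{(T-\bar t)^2}\ \le\ v(p_\eps+2\alpha\bar x),\qquad \frac{\bar t-\bar s}{\eps}\ \ge\ v(p_\eps-2\alpha\bar y).
\]
Subtracting and using the global Lipschitz bound $L:=\mathrm{Lip}(v)$ gives
\[
\frac{\eta}{T^2}\ \le\ \frac{\eta}{(T-\bar t)^2}\ \le\ v(p_\eps+2\alpha\bar x)-v(p_\eps-2\alpha\bar y)\ \le\ 2L\big(\alpha|\bar x|+\alpha|\bar y|\big).
\]
Since the right-hand side tends to $0$ as $\alpha\to0$, this is a contradiction. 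Hence $\sigma\le0$, the comparison principle holds, and uniqueness follows.

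I expect the one genuinely delicate point to be the non-compactness of $\R^d$: one must guarantee that $\Phi$ attains a maximum and that the maximiser cannot escape to spatial infinity, so that the crucial bound $\alpha|\bar x|+\alpha|\bar y|\to0$ is available. In the application this is immediate, since the solutions to be compared are Lipschitz (hence $u-w$ bounded and $u,w$ uniformly continuous); at the level of generality of the statement one instead invokes finite speed of propagation, localising to a backward cone of slope $\mathrm{Lip}(v)$, which is itself a consequence of $v$ being \emph{globally} Lipschitz. Everything else is the routine Crandall--Ishii--Lions machinery, and the cancellation of the $p_\eps$ terms between the two viscosity inequalities above shows plainly why a global --- rather than merely local --- Lipschitz bound on $v$ is both what is used and what suffices; this is precisely \cite[Th.~2.5]{ishii1984uniqueness}.
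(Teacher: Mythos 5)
The paper itself offers no argument for this theorem: it is stated as a direct corollary of \cite[Th.~2.5]{ishii1984uniqueness}, and that citation is the entire proof. Your proposal both invokes the same reference and sketches the standard Crandall--Lions doubling-of-variables comparison principle, and the mechanics of that sketch are correct: the two viscosity inequalities at the doubled maximum, the cancellation of $p_\eps=(\bar x-\bar y)/\eps$, the bound $v(p_\eps+2\alpha\bar x)-v(p_\eps-2\alpha\bar y)\le 2L(\alpha|\bar x|+\alpha|\bar y|)$, and the estimate $\alpha|\bar x|+\alpha|\bar y|=O(\sqrt\alpha)$ from comparing $\Phi$ at the maximiser with a reference point all fit together to contradict $\eta/T^2>0$. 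The one substantive caveat --- which you flag yourself --- is that the theorem as stated in the paper assumes only that the solutions are \emph{continuous} on $\R^d\times[0,T]$, whereas your self-contained argument uses boundedness of $u-w$ and uniform continuity both to guarantee that $\Phi$ attains its maximum and to exclude $\bar t=0$ or $\bar s=0$ via the initial comparison. Handling merely continuous, possibly unbounded solutions with a globally Lipschitz Hamiltonian is exactly the content of Ishii's theorem, so your sketch does not by itself prove the statement at its stated generality; but since you identify the gap precisely, point to finite speed of propagation as the localisation device, and fall back on the same citation the paper uses, the proposal is sound. In the application the limit points constructed in the paper lie in $\bar\Gamma$ and are locally uniformly approximated by the rescaled dynamics, so the regularity your sketch needs is indeed available there.
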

Since the function
$v$ in \eqref{eq:speedfunction} is globally Lipschitz, there is at most one
viscosity
solution of
\eqref{eq:hamilton-jacobiGW}.

The next proposition gives sufficient conditions to identify the viscosity
solution of \eqref{eq:hamilton-jacobiGW}. Even if it is stated for the special
case of functions
 living in the two-dimensional continuous state-space $\bar{\Gamma}$ defined in
 \eqref{eq:gammabar} and for the speed function $v$ defined in
 \eqref{eq:speedfunction}, it can be easily extended to a more general
 framework.
\begin{prop}\label{prop:condaxiom}
	Let $T$ be a positive real number and $S(s,t,\cdot)_{0\leq s \leq t \leq T}$ be a family of functions from
	$\bar{\Gamma}$ into itself satisfying the following properties:
	\begin{enumerate}
		\item \emph{Translation invariance} : for all $f\in \bar{\Gamma}$, all $c \in
				\R$ and all~$s \leq t$,
		$$S(s,t,f+c) =	S(s,t,f) + c.$$
		\item \emph{Monotonicity}: for all $s \leq t$, and all $f,g \in
				\bar{\Gamma}$,
		$$f \leq g \Rightarrow S(s,t,f)
		\leq
		S(s,t,g).$$
		\item \emph{Locality}: There exists $\alpha > 1$ such that for all
		$f,g \in \bar{\Gamma}$, all $s \leq t$, all $x\in \R^2$ and all $R\geq0$
		\[
		\sup_{z \in \mathcal{B}(x,R)} | S(s,t,f)(z) - S(s,t,g)(z) |
		\leq \sup_{z \in \mathcal{B}(x,R+\alpha(t-s))}| f(z) - g(z)|,
		\]
		where $\mathcal{B}(x,r)$ is the ball of centre $x$ and radius $r$ for
		the supremum norm on $\R^2$.
		\item \emph{Semi-group} : for all $r \leq s \leq t$ and all $f
				\in \bar{\Gamma}$,
		$$S(r,t,f) = S(s,t,S(r,s,f))  \text{ and } S(t,t,f) = f.$$
		\item \emph{Compatibility with linear solutions} : for all linear
                  function $f_{\rho} : x \mapsto \rho\cdot x$ with $\rho \in \R
                  \times [-1,0]$
                  and all $s \leq t$, $$S(s,t,f_{\rho}) = f_{\rho} + v(\rho)(t-s).$$
	\end{enumerate}

\noindent For any $f \in \bar{\Gamma}$, if $(x,t) \mapsto
	S(0,t,f)(x)$ is continuous, then it is a viscosity solution of
	\eqref{eq:H-J}.
\end{prop}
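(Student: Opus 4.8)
The plan is to show that $u(x,t) := S(0,t,f)(x)$ is both a subsolution and a supersolution of \eqref{eq:H-J}, using the five axioms. By hypothesis $u$ is continuous and $u(\cdot,0)=S(0,0,f)=f$ by the semi-group property, so it remains to verify the differential inequalities at any point $(x_0,t_0)$ where a smooth $\phi$ touches $u$ from above (subsolution case) or below (supersolution case). First I would establish a local Lipschitz-in-time estimate for $u$ coming from axioms 3 and 5: comparing $f$ on a ball with the affine function matching $f$ at the centre, one gets that $t\mapsto S(s,t,f)(x)$ has increments controlled by $\alpha(t-s)$ up to the oscillation of $f$, so in particular $u$ is locally Lipschitz and the relevant one-sided derivatives make sense — this is needed so that the comparison with $\phi$ is not vacuous.

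The core of the argument is a linearisation step exploiting axioms 1, 2, 4 and 5 simultaneously. Fix $(x_0,t_0)$ with $t_0>0$ and a test function $\phi$ with $\phi(x_0,t_0)=u(x_0,t_0)$ and $\phi\ge u$ near $(x_0,t_0)$. Set $\rho:=\nabla\phi(x_0,t_0)$; the key sub-step is to check that $\rho\in\R\times[-1,0]$, which follows because $u=S(0,t_0,f)\in\bar\Gamma$ forces its (one-sided) slopes in the $y$-direction to lie in $[-1,0]$, and a smooth function touching from above at an interior point inherits this constraint on its gradient. Now for small $h>0$ consider $S(t_0-h,t_0,\cdot)$ applied to two comparison functions: on one hand the affine function $g_\rho(z):=u(x_0,t_0-h)+\rho\cdot(z-x_0)$, which after a translation by the constant $u(x_0,t_0-h)$ is exactly a linear function of slope $\rho$, so axioms 1 and 5 give $S(t_0-h,t_0,g_\rho)(x_0)=u(x_0,t_0-h)+v(\rho)h$; on the other hand $u(\cdot,t_0-h)=S(0,t_0-h,f)$, whose image under $S(t_0-h,t_0,\cdot)$ is $u(\cdot,t_0)$ by the semi-group property. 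Since $\phi$ is smooth, a Taylor expansion gives $\phi(z,t_0-h)=g_\rho(z)+\partial_t\phi(x_0,t_0)\cdot(-h)+o(h)+O(|z-x_0|^2)$ near $x_0$, and because $\phi\ge u$ this lets me sandwich $u(\cdot,t_0-h)$ between $g_\rho$ and $g_\rho$ shifted by a constant of size $\partial_t\phi(x_0,t_0)h+o(h)$ on a ball whose radius shrinks slowly enough that the locality axiom 3 confines the effect of the $O(|z-x_0|^2)$ error. Applying monotonicity (axiom 2) to the evolution of these sandwiching functions and reading off the value at $x_0$ via the locality estimate of axiom 3 yields
\[
u(x_0,t_0) \ \le\ u(x_0,t_0-h) + v(\rho)h + \partial_t\phi(x_0,t_0)h + o(h),
\]
and after dividing by $h$ and sending $h\to0$, combined with $u(x_0,t_0)-u(x_0,t_0-h)=\partial_t\phi(x_0,t_0)h+o(h)$, this collapses to $\partial_t\phi(x_0,t_0)\le v(\nabla\phi(x_0,t_0))$. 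The supersolution inequality is obtained by the mirror-image argument, reversing all inequalities and using $\phi\le u$.

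The delicate point — and the main obstacle — is making the sandwiching rigorous on a ball of the right radius: the quadratic error from $\phi$ means that $g_\rho$ only dominates/is-dominated-by $u(\cdot,t_0-h)$ on a ball of radius $\delta$ that must be taken fixed (or shrinking) independently of $h$, while the locality radius in axiom 3 grows like $\alpha h$; one has to choose $\delta$ small (controlling the $C^2$-error of $\phi$) and then take $h$ small relative to $\delta$ so that $\alpha h \ll \delta$, and carefully track that the constant by which the two affine bounds differ is $(\partial_t\phi(x_0,t_0)+\text{error}(\delta))h+o(h)$ with $\text{error}(\delta)\to0$ as $\delta\to0$. A secondary subtlety is verifying a priori that $g_\rho$ and its constant shifts actually lie in $\bar\Gamma$ so that $S(t_0-h,t_0,\cdot)$ can be applied to them — this is exactly where the constraint $\rho\in\R\times[-1,0]$ proved above is used. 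Once these bookkeeping issues are handled, the five axioms combine cleanly to give the viscosity inequalities, and continuity plus the correct initial data finish the proof.
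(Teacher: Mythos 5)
Your overall strategy is the paper's: replace $\phi$ near $(x_0,t_0)$ by an affine-in-space function of slope $\rho=\nabla\phi(x_0,t_0)$, check $\rho\in\R\times[-1,0]$ so that this function lies in $\bar\Gamma$, evolve it by $S(t_0-h,t_0,\cdot)$ using translation invariance and compatibility with linear solutions, and compare with the evolution of $u(\cdot,t_0-h)$ via monotonicity and locality, the quadratic Taylor error on the locality ball being $O((\alpha h)^2)=o(h)$. (The two-parameter $\delta$ versus $h$ bookkeeping you worry about is not needed: the ball imposed by locality has radius $\alpha h$, so the Taylor error there is automatically $o(h)$; likewise the preliminary Lipschitz-in-time estimate for $u$ is superfluous, since the viscosity inequalities only involve derivatives of the smooth test function.)

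However, the final step as written does not close. You anchor the affine comparison function at $u(x_0,t_0-h)$, setting $g_\rho(z)=u(x_0,t_0-h)+\rho\cdot(z-x_0)$, and then invoke the identity $u(x_0,t_0)-u(x_0,t_0-h)=\partial_t\phi(x_0,t_0)h+o(h)$. That identity is false in general: the touching condition $\phi\ge u$ with equality at $(x_0,t_0)$ only gives the one-sided bound $u(x_0,t_0)-u(x_0,t_0-h)\ge\phi(x_0,t_0)-\phi(x_0,t_0-h)=\partial_t\phi(x_0,t_0)h+o(h)$; there is no upper bound on this increment and $u$ need not be time-differentiable at a touching point. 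Moreover, even granting the identity, substituting it into your displayed inequality cancels the $\partial_t\phi$ terms on both sides and leaves only $0\le v(\rho)h+o(h)$, which is vacuous. The repair --- and what the paper actually does with its function $\psi$ --- is to anchor the affine comparison at $\phi$ rather than at $u$: Taylor expansion gives $u(z,t_0-h)\le\phi(z,t_0-h)\le\phi(x_0,t_0-h)+\rho\cdot(z-x_0)+Ch^2$ on $\mathcal{B}(x_0,\alpha h)$, so monotonicity, locality, translation invariance and compatibility with linear solutions yield $u(x_0,t_0)=S(t_0-h,t_0,u(\cdot,t_0-h))(x_0)\le\phi(x_0,t_0-h)+v(\rho)h+Ch^2$; since $\phi(x_0,t_0-h)=u(x_0,t_0)-\partial_t\phi(x_0,t_0)h+o(h)$, dividing by $h$ gives $\partial_t\phi(x_0,t_0)\le v(\rho)$ with no information about the time increments of $u$ required. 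With that substitution your argument coincides with the paper's proof.
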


The proof of this proposition is postponed to appendix \ref{sec:sufcond}.

\subsection{Equilibrium measures}\label{sec:equilibrium}

In this section, we briefly remind a few facts about equilibrium
measures, following Prähofer and Spohn
\cite{prahofer1997exactly,prahoferthesis}.  They identified a family
of random height functions, whose spatial height differences
have a law that is translation-invariant with a slope parameter
  $\rho$ in $\R \times (-1,0)$, and are stationary with respect to
time (Gates and Westcott already treated the case
$\rho \in \{0\}\times (-1,0)$ when they introduced their model in
\cite{gates1995stationary}).  Prähofer and Spohn also computed the
stationary growth
speed~$v(\rho)$ which gives the candidate speed function of the
Hamilton-Jacobi equation \eqref{eq:speedfunction} in Theorem
\ref{theo:principalGW} and showed that the variance of spatial height
differences behaves logarithmically. To do so, they used fermionic Fock
space tools to carry out a fine analysis of the equilibrium
measures. Let us sum up useful results, most of which can be recovered or easily deduced
from \cite[Section 6]{prahoferthesis} and others will be detailed in Appendix \ref{sec:kak_cor_eq}.

The starting point of Prähofer and Spohn
  \cite{prahofer1997exactly,prahoferthesis} is the analysis of the
  Gates-Westcott model in a periodized setting, i.e. on a torus
  $[-M,M) \times \llbracket -N, N-1 \rrbracket$. Let us remark that, even though we use different notations, we follow the construction of \cite{prahoferthesis} rather than \cite{prahofer1997exactly} in which a more complicated "twisted" periodic boundary condition is considered (both constructions lead to the same results in the infinite volume limit of the torus). The allowed height profiles have space gradients
  that are periodic with horizontal period $2N$ and vertical period
  $2M$.  They evolve according to the \emph{periodised Gates-Westcott
    dynamic} i.e the Gates-Wescott dynamic with periodised Poissonian
  creations $[\omega]^{M,N}$ defined from $\omega$ as follows:
\begin{equation}\label{eq:poissonperio}
(x,y,t) \in [\omega]^{M,N} \Leftrightarrow ([x]^M, [y]^N,t) \in \omega,
\end{equation}
where $[x]^M$ is the unique number in $[-M,M)$ equal to $x$ modulo
$2M$ and similarly for $[y]^N$. In \cite[Section
6.2]{prahoferthesis}, the author defined a family of random height
functions (see \cite[equation (6.9)]{prahoferthesis}) taking values in $\Gamma$, whose law is
indexed by weight parameters $\eta^\pm$ on antikinks and on
kinks and a slope parameter along $y$ (related to the density of
level lines). The space gradients of these functions are $2M,2N$
  periodic and their law is translation invariant and time
  stationary. Fixing properly the weights $\eta^\pm$ and the line
  density, one can guarantee that the average slope approaches any
  fixed $\rho$ in $\R \times (-1,0)$ when the size of the torus tends
  to infinity (sending first $M$  and then $N$ to infinity).  We call
  then $\varphi_{M,N,\rho}$ the stationary periodized profile with
  limit slope $\rho$, so that
\begin{equation}\label{eq:moyequi}
\forall (x,y) \in \R \times \Z \qquad \lim_{N \to \infty} \lim_{M \to \infty} \Esp \left[ \varphi_{M,N,\rho}(x,y)
\right] = \rho \cdot (x,y)
\end{equation}%
(we are fixing here $ \varphi_{M,N,\rho}(0,0)=0$) and stationarity translates into
\begin{equation}\label{eq:stationary}
	\forall t \geq 0 \qquad
	h(\cdot,\cdot,t;\varphi_{M,N,\rho},[\omega]^{M,N})
	-h(0,0,t;\varphi_{M,N,\rho},[\omega]^{M,N})
	 \overset{\mathrm{law}}{=}
	\varphi_{M,N,\rho}(\cdot,\cdot).
	\end{equation}

      In \cite{prahofer1997exactly,prahoferthesis}, the authors showed
      that the joint probability density of kinks, antikinks and
      occupation variables (i.e the set of $(x,y)$ such that
      $\varphi_{M,N,\rho}(x,y+1)-\varphi_{M,N,\rho}(x,y)=-1$) has a
      determinantal structure and identified the associated
      kernel. When the size of the torus tends to infinity, the
      expression of this kernel somehow simplifies (see \cite[equation
      (6.20)]{prahoferthesis}).  Also, the average
      growth velocity is equal to the sum of the kink and antikink
      densities (independent of time by stationarity),  and one obtains
      \cite[Equation (6.24)]{prahoferthesis}:
\begin{equation}\label{eq:speedequilibrum}
\forall (x,y,t) \in \R \times \Z \times \R_+ \qquad \lim_{N \to \infty} \lim_{M \to \infty} \Esp  \left[
h(x,y,t;\varphi_{M,N,\rho},,[\omega]^{M,N})  \right] = \rho \cdot (x,y) +
v(\rho)\,  t,
\end{equation}
with  $v(\rho)$ as in \eqref{eq:speedfunction}.

Prähofer and Spohn also computed the covariance (or "structure
  function") between kinks, antikinks and occupation variables (see  \cite[Equation (6.30)]{prahoferthesis} and \cite[Equation (27) and
  (29)]{prahofer1997exactly}). They deduced that, after taking the infinite volume limit,
the variance of the height difference at equilibrium is equivalent to
$\pi^{-2}\log(\|(x,y)\|)$ as $\|(x,y)\|\to\infty$, but under the technical constraint
that $y/x$ is
constant or $x=\mathrm{o}(y)$. For our purposes, we will simply need the following upper bound that holds without technical restriction on
$x,y$:
  \begin{equation}\label{eq:logfluct}
  \lim_{N \to \infty} \lim_{M \to \infty} \Var \left(
  \varphi_{M,N,\rho}(x,y) \right) =
  \underset{\|(x,y)\| \to
  	\infty}{\bigo} \left(
  \log \left( \|(x,y)\| \right)\right).
  \end{equation}
	Equation \eqref{eq:logfluct} can be is easily shown by bounding the variance of $\varphi_{M,N,\rho}(x,y)$ by twice the sum of the variance of $\varphi_{M,N,\rho}(x,0)$ and the variance of $\varphi_{M,N,\rho}(x,y) - \varphi_{M,N,\rho}(x,0)$ (by Cauchy-Schwarz inequality) which grow logarithmically w.r.t $|x|$ and $|y|$, according to the asymptotic computations of Prähofer and Spohn.

	Finally, it can be shown that the kink/antikink covariance	decays like the inverse of the distance squared multiplied by a bounded
	oscillating term (an upper bound will be proven in Appendix \ref{sec:kak_cor_eq}). Note that this is similar to the large-distance behavior of
	dimer-dimer correlations in dimer models \cite{kenyon2009lectures}.
From this, it is easy deduce (see Appendix \ref{sec:kak_cor_eq}) that
\begin{equation}\label{eq:varkinks}
\lim_{N \to \infty} \lim_{M \to \infty} \Var \left(
N^{\pm}_{M,N,\rho}(\Lambda_R)
\right)  = \underset{R \to \infty}{\bigo} \left(
R^2 \log R\right),
\end{equation}
where
$N^{\pm}_{M,N,\rho}(\Lambda_R)$ is the number of
antikinks/kinks of $\varphi_{M,N,\rho}$ in the domain $\Lambda_R := [-R,R]
\times \llbracket -R , R \rrbracket$.

%---------------------------------------------------

\section*{Strategy of proof of Theorem \ref{theo:principalGW}}
\addcontentsline{toc}{section}{Strategy of proof} The crucial point is
Proposition \ref{prop:condaxiom} which gives sufficient
conditions for identifying the viscosity solution of
\eqref{eq:hamilton-jacobiGW}. Most of
  these conditions are naturally satisfied by the microscopic
  Gates-Westcott dynamics, apart from the ``compatibility with linear
  solutions'' which requires a study of the process started from the translation invariant stationary measures, beyond what was obtained in
  \cite{prahofer1997exactly,prahoferthesis}. The rest of the proof is
  based on compactness arguments, that allow to show
  sub-sequential existence of $S(s,t,\cdot)$ as the limit of the random
  microscopic semi-group $S_n(s,t,\cdot,\omega)$,
  associated to the rescaled Gates-Westcott
  dynamics. At the end, one identifies the limiting continuous semi-group thanks to
  Proposition \ref{prop:condaxiom}. The main steps of the proof are
  summed up as follows:
\begin{enumerate}
	\item \emph{Construction of a sequence of random discrete semi-groups}
	$(S_n(s,t;.,\omega))_{0 \leq s \leq t \leq T, \, n \in \N}$ (that will be defined more precisely in Section \ref{sec:defopGW}, Definition \ref{defi:S_n}):
	\[
	S_n(s,t;.,\omega) :
	\left\{
	\begin{aligned}
	\bar{\Gamma}  & \longrightarrow  \mathcal{F}(\R^2) \\
	f & \longmapsto \frac{1}{n}h(n.,\lfloor n. \rfloor,
	n(t-s);\varphi_n^f,\tau_{ns}\omega),
	\end{aligned}
	\right.
	\]
	with $\varphi_n^f \in \Gamma$ approaching $f$ in the sense of
        \eqref{eq:hypcondinitialGW}:
        $\|\frac{1}{n}\varphi_n^f(n.,\lfloor n. \rfloor) - f
        \|_{\infty} \leq 2/n$, $\tau_{ns}\omega$ is the time translation by $ns$ of $\omega$ defined later in \eqref{eq:translation} and where $\F(\R^2)$ is the set of
        functions from $\R^2$ to $\R$.  The function
        $S_n(s,t,f;\omega)$ should be thought of as the
        rescaled height function following the dynamic starting close
        from the continuous initial profile $f$ and with Poissonian
        creations taken between the macroscopic times $s$ and $t$.

	\item \emph{Compactness} : Show that there exists a subset $\Omega_0
	\subseteq \Omega$ of probability $1$ such that for any fixed $\omega \in
	\Omega_0$, from any subsequence $(n_k)_{k \in \N}$, we can extract a
	subsubsequence $(n_{k_l})_{l \in \N}$ such that for any function
	$f\in \bar{\Gamma}$, $(S_{n_{k_l}}(\cdot,\cdot;f,\omega))_{l \in \N}$ (seen
	as a sequence of
	functions from $\{(s,t)\in[0,T]^2, \, s \leq t\}$ to $\F(\R^2)$) converges
	for the topology of uniform convergence on all compact sets to a certain
	limiting function $S(\cdot,\cdot;f,\omega)$ which is continuous in space
	and time. The proof
	relies on a control of spatio-temporal height differences and on an
	adaptation of Arzelà-Ascoli's Theorem (see
	Proposition \ref{prop:ascoli}).

	\item \emph{Identification of the limit} : Show that any such limit
	$S(.,.;.,\omega)$
	satisfies the sufficient conditions of Proposition \ref{prop:condaxiom}
	thus $(x,y,t) \mapsto S(0,t;f,\omega)(x,y)$ is the unique
	viscosity solution of \eqref{eq:hamilton-jacobiGW}. The knowledge on
	equilibrium measures explained in section \ref{sec:equilibrium} will be
	used to show compatibility with linear solutions.

      \end{enumerate}

\section{Construction of a sequence of random discrete semi-groups}

Let us start by defining, for later use, the set of creations that  lead to an
actual height increase.
\begin{defi}\label{defi:omegaphi}
	For all $\omega \in \Omega$ and all $\varphi \in \Gamma$,
	we define the subset of $\emph{effective creations}$:
	\begin{equation}
	\omega^{\varphi} := \{ (x,y,t) \in \omega: \ h(x,y,t;\varphi,\omega) -
	h(x,y,t^-;\varphi,\omega) = 1 \}.
	\end{equation}
	It is a
		 subset of $\omega$ that depends (non trivially) only on $\varphi$
		and $\omega$.  For all $y\in \Z$, we define  the restriction of
		$\omega^{\varphi}$ and $\omega$ to line $y$:
	\begin{eqnarray}
	\omega^{\varphi}_y & := &\omega^{\varphi} \cap \left(\R \times \{y\} \times
	\R_+ \right)\\
	 \omega_y & := & \omega \cap \left(\R \times \{y\} \times
	 \R_+ \right)
	\end{eqnarray}
	By abuse of notation,
	we will
	see $\omega_y$ and $\omega^{\varphi}_y$ as subsets of $\R^2$.
\end{defi}

\subsection{Useful properties of the microscopic dynamic}\label{sec:microscopic}

In this section, we present useful properties satisfied by the
microscopic dynamic that will be useful to apply Proposition
\ref{prop:condaxiom} later on but also to show compactness.

\begin{lem}[Translation invariance]\label{lem:transinv}
	For all constant $m \in \Z$, all $\omega \in \Omega$, all $\varphi \in
	\Gamma$ and all $t \in \R_+$,
	\begin{equation}
	h(\cdot,\cdot,t;\varphi+m,\omega) = h(\cdot,\cdot,t;\varphi,\omega)
	+ m.
	\end{equation}
\end{lem}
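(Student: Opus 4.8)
The statement to prove is Lemma~\ref{lem:transinv}: that adding a constant $m \in \Z$ to the initial profile $\varphi$ results in exactly the same shift of the height function at all later times, for the \emph{same} realization of the creations $\omega$. The natural approach is to argue that the Gates-Westcott dynamic, as described by the three rules (lateral expansion, annihilation, creation), depends only on the \emph{gradients} of the height function (the positions of kinks and antikinks) and not on its absolute height, so that a global vertical shift commutes with the evolution.

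\textbf{Key steps.} First I would observe that $\varphi$ and $\varphi + m$ have exactly the same kinks, antikinks, and kink-antikink pairs along every line $y$ (since these are defined by differences $h(x^\pm,y) - h(x,y)$, which are unchanged by adding a constant), and likewise the same occupation variables $h(x,y) - h(x,y+1) \in \{-1,0\}$. Hence $\varphi + m \in \Gamma$ whenever $\varphi \in \Gamma$. Second, I would verify that each of the three dynamical rules preserves the relation ``differs by the constant $m$'': lateral expansion and annihilation move kinks/antikinks according to rules that refer only to kink/antikink positions, which are identical for the two configurations; and a creation at $(x,y,t) \in \omega$ is effective for $\varphi$ if and only if it is effective for $\varphi + m$, because the admissibility conditions $h(x,y-1,t^-) - h(x,y,t^-) = 1$, $h(x,y,t^-) - h(x,y+1,t^-) = 0$, and absence of a preexisting discontinuity at $x$ are all gradient conditions, invariant under the vertical shift. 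The cleanest way to make this rigorous is to use the inductive/local construction of the dynamic recalled in Remark~\ref{rem:welldefined}: on any bounded space-time window the evolution is determined by finitely many creations, processed one at a time; by induction on the number of creations already processed, if the two height functions differ by $m$ just before a given creation (or annihilation event), they still differ by $m$ just after. Passing to the limit over exhausting windows gives the claim for all $(x,y,t)$.

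\textbf{Main obstacle.} There is no serious analytic obstacle here; the only thing requiring care is bookkeeping in the infinite-volume construction, i.e. making the informal ``gradients determine the dynamic'' precise in a way compatible with the disagreement-percolation argument of Remark~\ref{rem:welldefined}. Concretely, one should note that the truncation regions (the $x$-cutoff $[a-T,b+T]$ and the integers $y^\pm$) are defined purely in terms of $\omega$ and are therefore the same for $\varphi$ and $\varphi + m$; on the resulting finite window the two processes are genuine finite-state Markov chains driven by the same clocks, and the inductive argument applies verbatim. Since the window can be taken arbitrarily large, the identity $h(\cdot,\cdot,t;\varphi+m,\omega) = h(\cdot,\cdot,t;\varphi,\omega) + m$ holds everywhere. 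I expect this lemma to be essentially immediate once the locality of the construction is invoked, and it will be used repeatedly (together with monotonicity and locality) when checking the hypotheses of Proposition~\ref{prop:condaxiom}.
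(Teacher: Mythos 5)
Your proposal is correct and follows essentially the same route as the paper's proof, which likewise argues that the dynamic depends only on height gradients (kink/antikink positions and vertical differences), so the evolution commutes with adding a constant; your extra care about the finite-volume inductive construction is a valid elaboration of the same idea.
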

\begin{proof}
	Having fixed $\omega$, by definition, the Gates-Westcott dynamic only
	depends on the height differences of the initial height function
	(kinks/antikinks and
	relative height differences along $y$). Therefore, the temporal height
	growth
	$h(\cdot,\cdot,t;\varphi,\omega)-\varphi$
	depends on $\varphi$ only through its spatial height differences hence is
	invariant
	by addition of a constant~$m$ to the initial function $\varphi$.
\end{proof}

\begin{lem}[Monotonicity]\label{lem:monotoneGW}
	For all $\varphi_1,\varphi_2 \in \Gamma$, for all $\omega \in \Omega$, and
	all $t\in
	\R_+$,
	\begin{equation}
	\varphi_1 \leq \varphi_2  \Rightarrow  h(.,.,t;\varphi_1,\omega) \leq
	h(.,.,t;\varphi_2,\omega) .
	\end{equation}
\end{lem}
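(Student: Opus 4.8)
The plan is to run a graphical coupling: drive both height functions by the \emph{same} realisation $\omega$ of the creations and check that the pointwise order is preserved by each elementary operation of the dynamics. Fix a target point $(x,y,t)\in\R\times\Z\times\R_+$; it suffices to prove $h(x,y,t;\varphi_1,\omega)\le h(x,y,t;\varphi_2,\omega)$. By the locality discussion of Remark~\ref{rem:welldefined}, each of these two values is determined by the restrictions of $\varphi_1$ and $\varphi_2$ to one common bounded region of $\R\times\Z$ and by the finitely many creations of $\omega$ inside one common bounded space-time box; list those relevant creations as $(x_i,y_i,t_i)$ with $0<t_1<\dots<t_K\le t$ (coincidences of creation times occur with probability zero and can be broken by a fixed convention on the order in which simultaneous creations are processed; a creation sitting on a preexisting discontinuity is simply discarded by the rule of Section~2 and thus causes no issue). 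We then prove, by induction on $k=0,1,\dots,K$, that $h(\cdot,\cdot,s;\varphi_1,\omega)\le h(\cdot,\cdot,s;\varphi_2,\omega)$ for every $s\le t_{k+1}$, with the convention $t_{K+1}:=t$.

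On any time interval free of the relevant creations, the dynamics reduces to lateral expansion together with annihilation of colliding kink/antikink pairs, and this deterministic flow acts independently on each horizontal line $y$. I would first record the explicit description of this flow on line $y$:
\[
h(x,y,t_0+s)=\sup_{|x-x'|\le s} h(x',y,t_0),
\]
i.e.\ it is the ``creation-free PNG flow'', each super-level set of $h(\cdot,y,t_0)$ growing at speed $1$ on both sides while mergers are automatically absorbed into the supremum. A short verification then shows that this formula indeed realises lateral expansion and annihilation, that it keeps the configuration in $\Gamma$ — the $\pm1$-jump and upper-semicontinuity structure along $x$ is preserved, and, since $h(x,y,t_0)-1\le h(x,y+1,t_0)\le h(x,y,t_0)$ for all $x$, taking suprema over the same ball of radius $s$ on both rows preserves condition~2 of Definition~\ref{defi:statespace} — and that it commutes with adding a constant. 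Monotonicity over such an interval is then immediate, since the right-hand side of the displayed formula is a nondecreasing function of $h(\cdot,y,t_0)$.

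It remains to cross a single creation $(x_0,y_0,t_{k+1})$. Write $h_i:=h(\cdot,\cdot,t_{k+1}^-;\varphi_i,\omega)$, so $h_1\le h_2$ by induction. If the creation is not effective for $\varphi_1$, then $h_1$ is unchanged at $(x_0,y_0)$ while $h_2$ can only remain equal or increase, so the order persists. If it is effective for $\varphi_1$ and $h_1(x_0,y_0)<h_2(x_0,y_0)$, then raising $h_1(x_0,y_0)$ by one keeps $h_1(x_0,y_0)\le h_2(x_0,y_0)$. The only remaining case is that the creation is effective for $\varphi_1$ while $h_1(x_0,y_0)=h_2(x_0,y_0)$; here I claim it is automatically effective for $\varphi_2$ as well, so both heights rise by one and the order is preserved. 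Indeed, effectiveness for $\varphi_1$ means $h_1(x_0,y_0-1)-h_1(x_0,y_0)=1$, $h_1(x_0,y_0)-h_1(x_0,y_0+1)=0$, with no preexisting discontinuity at $x_0$; combining $h_1\le h_2$, the equality at $(x_0,y_0)$ and condition~2 of Definition~\ref{defi:statespace} applied to $h_2$ forces $h_2(x_0,y_0-1)=h_2(x_0,y_0)+1$ and $h_2(x_0,y_0+1)=h_2(x_0,y_0)$, which are precisely the conditions for the creation to be effective for $\varphi_2$. Iterating over the $K$ creations closes the induction. The genuinely delicate point of the argument is the deterministic step — establishing the supremum representation of the expansion/annihilation flow and verifying that it stays in $\Gamma$, in particular that condition~2 along $y$ is preserved — whereas the creation step is just the short case analysis above.
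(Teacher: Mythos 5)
Your proof is correct and follows essentially the same route as the paper's: reduce to finitely many creations by locality (Remark~\ref{rem:welldefined}), note that the deterministic expansion/annihilation flow preserves the pointwise order, and then check the single nontrivial creation case where the two heights coincide at the creation point, using $h_1\le h_2$ together with the slope constraint along $y$ to force the creation to be accepted for $\varphi_2$ whenever it is accepted for $\varphi_1$. The only difference is cosmetic: you make explicit, via the creation-free supremum formula $h(x,y,t_0+s)=\sup_{|x'-x|\le s}h(x',y,t_0)$, the deterministic monotonicity step that the paper dismisses as ``not hard to show''.
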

\begin{proof}
	As explained in Remark \ref{rem:welldefined}, the dynamic can be defined
	locally and thus it
	is enough to show this Lemma when there are finitely many creations. It is
	not hard to show that the
	deterministic part of the dynamic (lateral expansion and annihilation) is
	non-decreasing with respect to the initial condition. We just have to check
	that any creation preserves monotonicity.

	Suppose that there
	is a
	creation at $(x,y,t)$ and that $h(.,.,s;\varphi_1,\omega) \leq
	h(.,.,s;\varphi_2,\omega)$ for $s < t$. Let us show that
	$h(x,y,t;\varphi_1,\omega) \leq
	h(x,y,t;\varphi_2,\omega)$. If $h(x,y,t^-;\varphi_1,\omega) <
	h(x,y,t^-;\varphi_2,\omega)$, then there is nothing to show since the
	height can
	only jump by one after a creation. If $h(x,y,t^-;\varphi_1,\omega) =
	h(x,y,t^-;\varphi_2,\omega)$ and if the creation is allowed for the dynamic
	starting from~$\varphi_1$, then so it is for the one starting from
	$\varphi_2$ because
	\begin{flalign*}
	&&h(x,y-1,t^-;\varphi_2,\omega) -
	h(x,y,t^-;\varphi_2,\omega) &\geq
	h(x,y-1,t^-;\varphi_1,\omega) - h(x,y,t^-;\varphi_1,\omega) = 1 &&\\
	\rlap{\text{and}} && h(x,y,t^-;\varphi_2,\omega) -
	h(x,y+1,t^-;\varphi_2,\omega) &\leq
	h(x,y,t^-;\varphi_1,\omega) - h(x,y+1,t^-;\varphi_1,\omega) = 0.&&
	\end{flalign*}
	In any case, the monotonicity is preserved
	after a creation.
\end{proof}

For $\omega \in \Omega$, we define $\tau_s \omega$, the time translation by   $s$ of $\omega$
as
follows:
\begin{equation}\label{eq:translation}
\forall (x,y,t) \in \R \times \Z \times \R_+, \; (x,y,t) \in \tau_s \omega
\Leftrightarrow (x,y,t+s) \in \omega.
\end{equation}

\begin{lem}[Markov property]\label{lem:markovGW}
	For all $\varphi \in \Gamma$, all $0 \leq s \leq t$ and all $\omega \in
	\Omega$,
	\begin{equation}
	h(.,.,t;\varphi,\omega) = h(.,.,t-s;h(.,.,s;\varphi,\omega),\tau_s \omega),
	\end{equation}
	and for all $0 \leq r \leq s \leq t$,
	\begin{equation}
	h(.,.,t-r;\varphi,\tau_r\omega) =
	h(.,.,t-s;h(.,.,s-r;\varphi,\tau_r\omega),\tau_s \omega).
	\end{equation}
\end{lem}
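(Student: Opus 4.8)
The plan is to reduce everything to the case of finitely many creations, exactly as in the proofs of Lemmas~\ref{lem:transinv} and~\ref{lem:monotoneGW}. By the locality discussion in Remark~\ref{rem:welldefined}, for any bounded space-time window $[a,b]\times\llbracket c,d\rrbracket\times[0,t]$ the evolution of $h$ on that window is determined by the finitely many creations of $\omega$ in a bounded domain. Hence it suffices to establish the identity when $\omega$ has only finitely many points in the relevant window; the general statement follows by exhausting $\R\times\Z$ by such windows. For the finite case, I would order the creation times $0<t_1<t_2<\dots<t_k$ (almost surely distinct) and argue by induction on $k$.

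The first identity, $h(\cdot,\cdot,t;\varphi,\omega)=h\bigl(\cdot,\cdot,t-s;h(\cdot,\cdot,s;\varphi,\omega),\tau_s\omega\bigr)$, is essentially a ``semi-group in time'' statement for the deterministic-plus-Poissonian dynamics. The key observation is that between two consecutive creation times the dynamics is purely deterministic (lateral expansion at speed $\pm1$ and annihilation of kink/antikink pairs), and this deterministic flow $\Phi_r$ trivially satisfies $\Phi_{r_1+r_2}=\Phi_{r_2}\circ\Phi_{r_1}$; moreover a creation at time $t_i$ acts as an instantaneous map depending only on the current configuration. Thus $h(\cdot,\cdot,t;\varphi,\omega)$ is a composition, in chronological order, of deterministic flow segments and creation maps, and cutting this composition at time $s$ yields precisely the right-hand side, once one notes that $\tau_s\omega$ re-indexes the creations after time $s$ by subtracting $s$ from their time coordinate — which is exactly what is needed so that the ``second half'' of the composition, run for time $t-s$ with creations $\tau_s\omega$ starting from the time-$s$ configuration, reproduces the original segments $t_i\mapsto t_i-s$. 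One must also handle the boundary case where $s$ coincides with some $t_i$ (probability zero, but for a clean statement one checks that the upper-semi-continuity convention in Definition~\ref{defi:statespace} makes the configuration right-continuous in time at creations, or simply restricts to $s\notin\{t_i\}$ and uses continuity). The second identity is obtained from the first by applying it to $\tau_r\omega$ in place of $\omega$, with the time $s-r$ in place of $s$ and $t-r$ in place of $t$, and using the obvious composition rule $\tau_{s-r}(\tau_r\omega)=\tau_s\omega$, which follows directly from \eqref{eq:translation}.

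I expect the main obstacle to be purely bookkeeping rather than conceptual: one has to be careful that the locality reduction from Remark~\ref{rem:welldefined} is uniform enough that the same bounded window controls both sides of the identity simultaneously (the left side run for time $t$, the right side run in two stages for times $s$ and $t-s$), so that passing to the infinite-volume limit is legitimate. Since kinks and antikinks travel at speed~$1$, enlarging the spatial window by $t$ in each direction and choosing the vertical cut-off lines $y^\pm$ free of creations in that enlarged window handles this: the finitely many creations in $[a-t,b+t]\times\llbracket y^-,y^+\rrbracket\times[0,t]$ govern every quantity appearing in the statement, and on this finite configuration the inductive argument above applies verbatim. No genuinely new estimate is needed; the lemma is the time-homogeneity of the construction made precise.
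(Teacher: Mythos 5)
Your proposal is correct and follows essentially the same route as the paper: reduce to finitely many creations via Remark~\ref{rem:welldefined}, observe that the first identity is immediate from the construction of the dynamic as a chronological composition of deterministic flows and creation maps cut at time $s$, and deduce the second identity by applying the first with $(s,t,\omega)$ replaced by $(s-r,t-r,\tau_r\omega)$. The paper's proof is just a terser version of the same argument.
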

\begin{proof}
	From Remark \ref{rem:welldefined}, we can assume that $\omega$ contains
	finitely
	many points. In this case, the first point follows directly from the
	construction
	of the dynamic. The second point is obtained from the first point applied
	to $(s',t',\omega')=(s-r,t-r,\tau_r \omega)$.
\end{proof}

Next, as announced in Remark \ref{rem:welldefined}, we are going to show that
the dynamic on a bounded space-time domain only depends on
the initial height function and the creations on a bigger domain that grows
linearly with time with high probability. To make this statement precise, for
any $x \in \R^2$, $R\geq0$, $t \in \R_+$ and $\alpha >0$, let us define
\begin{equation}\label{eq:defA}
A_{x,R,t,\alpha} = \left\{ \omega \in \Omega, \quad
\begin{aligned}
& \forall \varphi,
\varphi'
\in \Gamma \quad \forall \omega' \in \Omega \\
& \text{if } \varphi=\varphi' \text{ on } \mathcal{B}(x,R+\alpha \, t)\text{
and
} \omega'=\omega \text{ on }
\mathcal{B}(x,R+\alpha \, t) \times [0,t] \\
& \text{then } \forall u \leq t, \ {h(\cdot,\cdot,\cdot;\varphi,
	\tau_{u}\omega)} = {h(\cdot,\cdot,\cdot;\varphi',
	\tau_{u}\omega')} \text{ on } \mathcal{B}(x,R) \times [0,t-u]
\end{aligned}
\right\}
\end{equation}
where the notation $\mathcal{B}$ abusively denotes the ball (for the supremum
norm) intersected
with
$\R \times \Z$.

\begin{prop}[Linear propagation of information]\label{prop:prolinGW}
	There exist constants $\alpha>1$ and $\gamma>0$, such that for all $R>0$,
	all $t \in \R_+$ and all $x \in \R^2$,
	\[
\Pro\left(A_{nx,nR,nt,\alpha} \right) \stackrel{n \to \infty} = 1 -
{\mathrm{O}}\left( e^{-\gamma t \, n} \right)\ .
\]
      \end{prop}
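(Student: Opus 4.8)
The plan is to realise the bad event — the event that the height on $\mathcal B(x,R)$ up to time $t$ is \emph{not} determined by $\varphi$ and $\omega$ restricted to the enlarged cylinder $\mathcal B(x,R+\alpha t)\times[0,t]$ — as being contained in a percolation-type event for the Poisson creations, and then to estimate the probability of that percolation event by a first-moment / Peierls argument. As recalled in Remark~\ref{rem:welldefined}, information propagates in the $x$-direction at speed exactly $1$ (kinks and antikinks move ballistically), so the only mechanism by which information can escape the horizontal slab $[x_1-R-\alpha t,\ x_1+R+\alpha t]$ is via the $y$-direction: a creation at line $y$ can, via the creation rule, influence lines $y\pm1$, and thus a chain of creations stacked in successive lines can transmit information vertically. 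The key point is that such a chain must be \emph{long}: to reach distance $\sim \alpha t$ away in the $y$-direction within time $t$ it must use at least $\sim\alpha t$ creations that are moreover ``causally ordered'' (consecutive ones on adjacent lines, with increasing times). For $\alpha$ large enough this event is exponentially unlikely because the Poisson intensity is fixed ($=2$).

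Concretely, first I would fix the horizontal window once and for all: by the speed-$1$ bound, for \emph{any} realisation, creations with $x$-coordinate outside $[x_1-R-t,\ x_1+R+t]$ cannot affect $h$ on $\mathcal B(x,R)\times[0,t]$; so it suffices to control the vertical direction, and I may as well work with $\alpha>1$ and the horizontal window $[x_1-R-\alpha t, x_1+R+\alpha t]$ which is wider. Next I would make precise the notion of a ``causal vertical path'': a sequence of creations $(x^{(1)},y^{(1)},t^{(1)}),\dots,(x^{(k)},y^{(k)},t^{(k)})\in\omega$ with $|y^{(i+1)}-y^{(i)}|=1$, $t^{(1)}<\dots<t^{(k)}$, all $x^{(i)}$ in the horizontal window, and all $t^{(i)}\le t$. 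One shows — essentially by unwinding the locality argument of Remark~\ref{rem:welldefined}, i.e. by the disagreement-percolation idea — that if $\varphi=\varphi'$ and $\omega=\omega'$ on $\mathcal B(x,R+\alpha t)\times[0,t]$ but $h\ne h'$ somewhere on $\mathcal B(x,R)\times[0,t-u]$ for some $u\le t$, then there exists a causal vertical path starting from a line at distance $\le R$ from $x_2$ and ending at a line at distance $>R+\alpha t$ from $x_2$; in particular it has length $k> \alpha t$. Then I would bound the probability that such a long causal path exists: fixing the starting line ($\le 2R+1$ choices, but actually one sums over all of them) and then, at each of the $k$ steps, the number of creations available on the two neighbouring lines in a time interval of length $\le t$ and a space interval of the fixed horizontal width is Poisson of parameter $\lambda \asymp (R+\alpha t)\,t$; summing over the ordered chain and optimising, the expected number of causal paths of length $k$ is at most $C^k \lambda^k/k!$ or, more cleanly, one gets for the number of length-$\ge \alpha t$ paths a bound of the form $e^{-c\alpha t}$ provided $\alpha$ is chosen large compared with the intensity. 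After the ballistic rescaling $x\mapsto nx$, $R\mapsto nR$, $t\mapsto nt$ this becomes $\mathrm O(e^{-\gamma t n})$, which is the claim.

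The main obstacle is the combinatorial/probabilistic estimate on the number of causal vertical chains of creations: one must count ``self-avoiding-in-$y$, time-increasing'' chains of Poisson points and show their expectation decays exponentially in the length, uniformly after rescaling. The subtlety is that the relevant chains live in a semi-discrete space-time and the creations at each level are continuous in $(x,t)$, so one cannot naively use a bounded branching number as in discrete oriented percolation; instead one integrates out the continuous time-coordinates (a $k$-fold ordered integral over $[0,t]$ giving the $t^k/k!$) and the continuous $x$-coordinates over a window of width $\asymp R+\alpha t$, and one should be a little careful that the factorial beats both the window width and the branching in $y$ (which is just $2$ at each step, hence harmless). Choosing $\alpha$ large enough to make the per-step cost strictly less than $1$ after these integrations, and then noting that the constant $\alpha$ produced this way is automatically $>1$, closes the argument; the $\gamma>0$ is then explicit in terms of $\alpha$ and the Poisson intensity $2$.
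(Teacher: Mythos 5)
Your overall architecture --- realise the complement of $A_{nx,nR,nt,\alpha}$ inside a percolation-type event for the creations and kill it by a first-moment bound --- is exactly the paper's, and your reduction to a chain of creations via disagreement propagation is on the right track. But your definition of a ``causal vertical path'' keeps only two of the three constraints that the disagreement argument actually produces, and the one you drop is the one that makes the estimate close. The paper's chain $(x_i,y_i,t_i)$ satisfies, besides adjacency $|y_{i+1}-y_i|\le 1$ and time-ordering, the light-cone condition $|x_{i+1}-x_i|\le |t_{i+1}-t_i|$: the $(i+1)$-st creation must lie in the backward light cone of the $i$-th, because the height at $(x_i,y_{i+1},t_i^-)$ depends only on the effective creations of line $y_{i+1}$ inside the cone $C^-_{x_i,t_i}$ and on the initial profile on $[x_i-t_i,x_i+t_i]$ (speed-one propagation, cf.\ Lemma \ref{lem:varformPNG}). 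You instead only require every $x^{(i)}$ to lie in the fixed horizontal window of width $\asymp R+\alpha t$.

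Without the light-cone condition the first-moment bound fails. For a fixed $y$-sequence, integrating out the ordered times gives a single factor $T^k/k!$ with $T=nt$, but each $x^{(i)}$ still ranges freely over a window of width $W\asymp n(R+\alpha t)$, so the expected number of your chains of length $k=\lfloor\alpha nt\rfloor$ is of order $(2WT)^k/k!\asymp (c\,n)^k\to\infty$; enlarging $\alpha$ does not help, since $W$ grows with $\alpha$ as well. Indeed the event ``there exists a causal vertical path (in your sense) of length $\ge\alpha nt$'' is \emph{typical}, not rare, so the inclusion alone cannot give the result. With the light-cone condition, each creation is confined to the backward cone of the previous one; after a $\pi/4$ rotation the chain becomes a doubly monotone sequence of Poisson points, the count acquires a $1/(k!)^2$ rather than $1/k!$ (this is the longest-increasing-subsequence bound, Lemma \ref{lem:lisr} and Corollary \ref{coro:lis}), and the per-step cost becomes $O(e^2T^2/k^2)=O(e^2/\alpha^2)$, uniformly in $n$ and $R$ since only the time-diameter $T$, not the area of the window, enters the exponentiated factor. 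Choosing $\alpha$ of order $e$ (the paper takes $\alpha=\sqrt{24}\,e$) then also absorbs the $3^k$ branching in $y$. The missing ingredient is thus precisely the nested-light-cone structure of the disagreement chain and the second factorial it buys you.
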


\begin{proof}
To lighten the notations, without loss of generality, we will assume that
$x=0$. The idea of the proof is the following. If the height functions differ on
$\mathcal{B}(0,R)\times[0,t]$ and if initial conditions and creations
agrees on $\mathcal{B}(0,R+\alpha t)$, then there must exists a "chain of
creations" of length at least $\alpha t$ (connecting $\mathcal{B}(0,R)$ to the
complement of $\mathcal{B}(0,R+\alpha t)$) in a time interval of length less
than $t$
(see Lemma \ref{lem:chainecoloree}). This is unlikely if $\alpha$ is chosen big
enough and if $t$ goes to infinity.

\begin{lem}\label{lem:chainecoloree}
Let $\varphi,\varphi' \in \Gamma$
agreeing on $\mathcal{B}(0,R+\alpha t)$ and $\omega, \omega' \in \Omega$
agreeing
on $\mathcal{B}(0,R+\alpha t)\times [0,t]$. If
$h(\cdot,\cdot,\cdot,\varphi;\omega)$ and
$h(\cdot,\cdot,\cdot,\varphi';\omega')$ differ on $\mathcal{B}(x,R) \times
[0,t]$, then, there must exist a
sequence $(x_i,y_i,t_i)_{0 \leq i \leq k }$ with $k:=\lfloor \alpha t
\rfloor$
satisfying
\begin{itemize}
\item $|y_0| \leq R$ and $|y_{i+1}-y_{i}| \leq 1$, for all $i \in \llbracket 0, k-1 \rrbracket$,
\item $0 \leq t_k \leq \cdots \leq t_0 \leq t$ and $(x_i,t_i) \in \omega_{y_i}$ for all $i \in \llbracket 0, k \rrbracket$,
with $\omega_y$ as  in Definition \eqref{defi:omegaphi}
\item $|x_0| \leq R+t-t_0$ and $|x_{i+1}-x_{i}| \leq t_{i} - t_{i+1}$ for all $i \in \llbracket 0, k-1 \rrbracket$.
\end{itemize}
\end{lem}
Before proving this Lemma, let us finish the proof of Proposition
\ref{prop:prolinGW}. If $\omega \notin A_{0,R,t,\alpha}$, then there must
exists $\varphi,\varphi',\omega'$ as in Lemma \ref{lem:chainecoloree} and some
$u \in [0,t]$ such that $h(\cdot,\cdot,\cdot,\varphi;\tau_u \omega)$ and
$h(\cdot,\cdot,\cdot,\varphi';\tau_u \omega')$ differ on $\mathcal{B}(x,R)
\times [0,t-u]$. By applying Lemma \ref{lem:chainecoloree} at time $t-u$ and
with creations $\tau_u \omega$ and $\tau_u \omega'$ we get a chain of creations
$(x_i,y_i,t_i)_{0 \leq i \leq \lfloor \alpha t
	\rfloor }$ such that $(x_i,y_i,t_i+u)_{0 \leq i \leq \lfloor \alpha t
	\rfloor }$ satisfies the $3$ points in Lemma \ref{lem:chainecoloree}. In
	order to be consistent with the definition of $C^{\uparrow}$ in Appendix \ref{sec:lis}, we relabel this sequence by setting
	$(x'_i,y'_i,t'_i)_{0 \leq i \leq \lfloor \alpha t
		\rfloor } := (x_{\lfloor \alpha t
		\rfloor-i},y_{\lfloor \alpha t
		\rfloor-i},t_{\lfloor \alpha t
		\rfloor-i}+u)_{0 \leq i \leq \lfloor \alpha t
		\rfloor }$ so that $|x'_{i+1}-x'_{i}| \leq t'_{i+1} - t'_{i}$. Doing
		this, we see that
\[
^c\!A_{0,R,t,\alpha} \subseteq \bigcup_{\ubar{y}' \in \mathcal{Y}_{R,\lfloor
\alpha t \rfloor}}  C^{\uparrow}_{\omega,\ubar{y}'}(T_{R,t})
\]
where $\mathcal{Y}_{R,n}$ is defined by
$\mathcal{Y}_{R,n} := \{ (y'_0\cdots y'_n)\in \Z^{n+1}, \ |y'_n| \leq R,
\ \forall i \in \llbracket 0,n-1 \rrbracket \, |y'_{i+1}-y'_i|\leq 1 \}$
 and $T_{R,t}$ is the trapezoid defined by
$T_{R,t}:=\{(x,s), \ s \in [0,t], \ |x| \leq R+t-s\}$. By Corollary
\ref{coro:lis}, since $T_{R,t}$ is of vertical diameter $t$ and area
$2Rt + t^2$, for any
$\ubar{y} \in \mathcal{Y}_{R,\lfloor \alpha t \rfloor}$,
	\[
	\Pro\left(C^{\uparrow}_{\omega,\ubar{y}}(T_{R,t}) \right) \leq  2 \, (2Rt +
	t^2) \,
	\left(\frac{4e^2\, t^2 }{\lfloor \alpha t \rfloor^2}\right)^{\lfloor \alpha
	t \rfloor}.
      \]
	Now, since $\mathcal{Y}_{R,\lfloor \alpha t \rfloor}$ is of cardinality bounded by $(2
	\lfloor R \rfloor +1) \, 3^{\lfloor \alpha t \rfloor}$, by union bound,
		\begin{equation}\label{eq:unionlocal}
		\Pro\left(^c\!A_{0,R,t,\alpha} \right) \leq  2 \, (2Rt + t^2) \, (2
		\lfloor R \rfloor +1) \,
		\left(\frac{12e^2\, t^2 }{\lfloor \alpha t \rfloor^2}\right)^{\lfloor \alpha t \rfloor},
		\end{equation}
and thus, for $\alpha := \sqrt{24} \, e$ we get
\[
\Pro\left(^c\!A_{0,nR,nt,\alpha} \right) \stackrel{n \to \infty} = \bigo \left( n^3 \,
		2^{- \alpha n t } \right),
\]
and the proof of Proposition \ref{prop:prolinGW} is concluded by choosing any $\gamma < \alpha \ln 2$.
\end{proof}

\begin{proof}[Proof of Lemma \ref{lem:chainecoloree}]
Let us introduce some notations (we will also use the notation
$\omega^\varphi,\omega^{\varphi}_y $ as in Definition \ref{defi:omegaphi}).
For all $(x,t) \in \R \times \R_+$, we define
$C_{x,t}^- := \{ (z,s) \in \R \times \R_+, \: |z-x| \leq t -s\}$. By
speed one propagation of kinks/antikinks, $h(x,y,t,\varphi;\omega)$
only depends on $\omega_y^{\varphi} \cap C_{x,t}^-$ and on
$\varphi(z,y)$ for $z \in [x-t,x+t]$. This fact can also be seen as a
consequence of Lemma \ref{lem:varformPNG} below. Now, we are
going to construct by induction a chain of creations like in
Lemma \ref{lem:chainecoloree}.

\underline{Construction of $(x_0,y_0,t_0)$ :}  Assume that there exists
$(x,y,s)\in \mathcal{B}(0,R)\times[0,t]$ such that we have  $h(x,y,s,\varphi;\omega)\neq
h(x,y,s,\varphi';\omega')$. Let us fix such a $(x,y,s)$ and set $y_0:=y$. By the
discussion above, since
$\varphi(\cdot,y_0)$ and $\varphi'(\cdot,y_0)$ agree on the interval
$[-R-\alpha t , R + \alpha t] \supseteq [x-t,x+t]$ (because $|x| \leq R$ and
$\alpha > 1$), necessarily $\omega_{y_0}^{\varphi} \cap  C_{x,s}^-$ and
$(\omega')^{\varphi'}_{y_0} \cap C_{x,s}^-$ are distinct. In other words, we
can find $(x_0,t_0) \in \omega_{y_0} \cap C_{x,s}^-$ ($=\omega_{y_0}' \cap
C_{x,s}^-$ by assumption on $\omega'$) corresponding to a kink/antikink
creation that occurs
for one of the dynamics but not for both (and such that $|x_0| \leq R + t -
t_0$). Consequently, the height functions must differ either at
$(x_0,y_0-1,t_0^-)$, $(x_0,y_0,t_0^-)$  or $(x_0,y_0+1,t_0^-)$ (otherwise the creation would have
been accepted or rejected simultaneously in both dynamics).

 \underline{Construction of $(x_{i+1},y_{i+1},t_{i+1})$ :} According to the
 three possibilities above, we set $y_1$ to be equal to $y_0-1,y_0$ or $y_0+1$
 (respectively in the first, second and third possibility). If $y_1$ is still
 in $\mathcal{B}(0,R+\alpha t)$, we can repeat the procedure above and find
 some $(x_1,t_1) \in \omega_{y_1} \cap C_{x_0,t_0}^-$ (hence $|x_1-x_0| \leq
 t_0 - t_1$) corresponding to a creation that occurs for one of the dynamic but
 not for both and so on. This construction continues as long as $y_i \in
 \mathcal{B}(0,R+\alpha t)$ and note that $y_i$ cannot exit $\mathcal{B}(0,R+\alpha t)$ for $i \leq \lfloor
 \alpha t \rfloor$. Overall, we  constructed a sequence as in Lemma
 \ref{lem:chainecoloree}. Its length is at least $\lfloor \alpha t \rfloor
 +1$.
\end{proof}

Now, let us show a Lemma that relates the linear propagation of information
with a Lipschitz property with respect to the initial height profile.

\begin{lem}\label{lem:locality}
	For all $R\geq0$,
	all $s \leq t$, all $x\in
	\R^2$ and all $n \in \N$, the following event happens with probability $1-
	\bigo \left( e^{- \gamma  n}\right)$ as $n$ goes to infinity (with $\gamma$ as in Proposition \ref{prop:prolinGW}):
        \begin{eqnarray}
          \label{eq:cor3}
	 \sup_{z \in n \mathcal{B}(x,R)}|h(z,v-u;\varphi,\tau_u
	 \omega) -
	h(z,v-u;\varphi',\tau_u \omega)|  \leq  \sup_{z \in
	n\mathcal{B}(x,R+\alpha	(t-s))} |\varphi(z) - \varphi'(z)|,
        \end{eqnarray}
	for every $\varphi,\varphi' \in \Gamma$ and every $u,v$ such that  $ns  \leq u \leq v \leq nt$ (and with $\alpha$ as in Proposition~\ref{prop:prolinGW}).
      \end{lem}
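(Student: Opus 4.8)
The plan is to deduce Lemma~\ref{lem:locality} directly from the linear-propagation estimate of Proposition~\ref{prop:prolinGW} together with the monotonicity and translation-invariance properties established in Lemmas~\ref{lem:monotoneGW} and~\ref{lem:transinv}. The point is that being inside the good event $A_{nx,nR,nt-ns,\alpha}$ (with the appropriate scaling) already gives, via a standard sandwiching argument, exactly the Lipschitz bound \eqref{eq:cor3}, so the only probabilistic content is the measure of that event, which Proposition~\ref{prop:prolinGW} controls.

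First I would unwind the scalings. Fix $R\geq 0$, $s\leq t$, $x\in\R^2$ and $n\in\N$, and consider the event $A_{nx,\,nR,\,n(t-s),\,\alpha}$ from \eqref{eq:defA}. By Proposition~\ref{prop:prolinGW} applied with $R$ replaced by $R$, $t$ replaced by $t-s$ and $x$ replaced by $x$, this event has probability $1-\bigo(e^{-\gamma(t-s)n})=1-\bigo(e^{-\gamma n})$ when $t-s$ is bounded below; in general one absorbs the constant $t-s>0$ into the $\bigo$, which is harmless since $s,t$ are fixed. (If $s=t$ the statement is trivial.) So it suffices to prove that, deterministically, $\omega\in A_{nx,nR,n(t-s),\alpha}$ implies the inequality \eqref{eq:cor3} for all $\varphi,\varphi'\in\Gamma$ and all $ns\leq u\leq v\leq nt$.

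Next, the deterministic step. Set $M:=\sup_{z\in n\mathcal{B}(x,R+\alpha(t-s))}|\varphi(z)-\varphi'(z)|$; if $M=+\infty$ there is nothing to prove, so assume $M<\infty$. Define the truncated profiles: let $\tilde\varphi$ agree with $\varphi$ on $n\mathcal{B}(x,R+\alpha(t-s))$ and be suitably extended outside so that $\varphi'-M\leq\tilde\varphi\leq\varphi'+M$ everywhere while $\tilde\varphi\in\Gamma$ — concretely one may take $\tilde\varphi:=\min(\varphi,\varphi'+M)$ inside a slightly larger region and $\varphi'+M$ far away, checking it still satisfies the two conditions of Definition~\ref{defi:statespace} (capping by a translate of an admissible function preserves the $\pm1$ jump and the vertical $\{-1,0\}$ constraints; this is the one spot needing a short verification). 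Then on the ball $n\mathcal{B}(x,R+\alpha(t-s))$ we have $\tilde\varphi=\varphi$ and $\varphi'-M\leq\tilde\varphi\leq\varphi'+M$. Since $v-u\leq nt-ns=n(t-s)$, membership in $A_{nx,nR,n(t-s),\alpha}$ gives, taking $u$ as the time-shift parameter and using that $\tilde\varphi=\varphi$ and $\tau_u\omega=\tau_u\omega$ agree on the relevant ball,
\[
h(z,v-u;\varphi,\tau_u\omega)=h(z,v-u;\tilde\varphi,\tau_u\omega)\qquad\text{for all }z\in n\mathcal{B}(x,R).
\]
Now apply Lemma~\ref{lem:monotoneGW} to the sandwich $\varphi'-M\leq\tilde\varphi\leq\varphi'+M$ (legitimate since all three are in $\Gamma$) and then Lemma~\ref{lem:transinv} to pull the constants out:
\[
h(z,v-u;\varphi',\tau_u\omega)-M\;\leq\;h(z,v-u;\tilde\varphi,\tau_u\omega)\;\leq\;h(z,v-u;\varphi',\tau_u\omega)+M,
\]
which combined with the previous display yields $|h(z,v-u;\varphi,\tau_u\omega)-h(z,v-u;\varphi',\tau_u\omega)|\leq M$ for every $z\in n\mathcal{B}(x,R)$, i.e.\ exactly \eqref{eq:cor3}.

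The main obstacle — really the only non-bookkeeping point — is the construction of the truncated profile $\tilde\varphi$: one must exhibit an element of $\Gamma$ that coincides with $\varphi$ on the prescribed ball and is squeezed between $\varphi'-M$ and $\varphi'+M$ globally, because Lemma~\ref{lem:monotoneGW} requires genuine members of $\Gamma$ (not just local agreement) and the event in \eqref{eq:defA} is phrased in terms of pairs of admissible profiles. Capping/flooring by translates of $\varphi'$ is the natural candidate and preserves admissibility, but one should double-check that the capping is done on a region large enough that on $n\mathcal{B}(x,R+\alpha(t-s))$ the function is untouched — e.g.\ cap only outside a ball strictly containing that one — so that the hypothesis ``$\tilde\varphi=\varphi$ on $\mathcal{B}$'' of $A_{\cdot}$ is met. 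Everything else is a direct chaining of the named lemmas plus the probability estimate of Proposition~\ref{prop:prolinGW}.
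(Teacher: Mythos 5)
Your strategy is essentially the paper's: condition on the linear-propagation event $A$, build an auxiliary admissible profile by capping, and combine monotonicity (Lemma~\ref{lem:monotoneGW}) with vertical translation invariance (Lemma~\ref{lem:transinv}). The paper uses the one-sided profile $\tilde\varphi=\varphi\vee(\varphi'+m)$, which equals $\varphi'+m$ on the large ball and dominates $\varphi$ globally, proving one inequality and then swapping the roles of $\varphi,\varphi'$; your two-sided sandwich is the mirror image of the same idea and is equally valid.

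Two details need tightening. First, for $s>0$ the event $A_{nx,nR,n(t-s),\alpha}$ as defined in \eqref{eq:defA} only quantifies over time shifts $u\leq n(t-s)$ applied to $\omega$ itself, whereas the lemma requires shifts $u\in[ns,nt]$; you must either reduce to $s=0$ by stationarity of the Poisson process (as the paper does) or, equivalently, work with the event $\{\tau_{ns}\omega\in A_{nx,nR,n(t-s),\alpha}\}$, which has the same probability. Second, your description of $\tilde\varphi$ (``$\min(\varphi,\varphi'+M)$ inside a slightly larger region and $\varphi'+M$ far away'') is riskier than needed: a region-by-region gluing can create inadmissible jumps at the interface, and the cap alone does not enforce the lower bound $\tilde\varphi\geq\varphi'-M$ away from the ball. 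The clean choice is the global formula $\tilde\varphi=\bigl(\varphi\wedge(\varphi'+M)\bigr)\vee(\varphi'-M)$ with $M\in\N$ (note $M$ is automatically an integer since $\varphi,\varphi'$ are integer-valued, which is also needed to invoke Lemma~\ref{lem:transinv}); pointwise min and max of elements of $\Gamma$ remain in $\Gamma$, and on $n\mathcal{B}(x,R+\alpha(t-s))$ this $\tilde\varphi$ coincides with $\varphi$. With these fixes your argument goes through.
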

\begin{proof}
	By time translation invariance of the law of the Poisson process, we can assume that $s=0$.
	We are going to show that the event
	$A_{nx,nR,nt,\alpha}$ is included in the
	event in the l.h.s. of  \eqref{eq:cor3}. To do this, let us fix $\omega \in
	A_{nx,nR,nt,\alpha}$, $\varphi,\varphi' \in \Gamma$ and $0 \leq u \leq v
	\leq nt$. We
	set $$m = \sup_{z \in
		n\mathcal{B}(x,R+\alpha t)}|\varphi(z) - \varphi'(z)| \in \N \ , $$
	and $\tilde{\varphi} := \varphi \vee (\varphi' + m)$. It is not hard to
	show
	that $\tilde{\varphi} \in \Gamma$. Now, for all $z \in n\mathcal{B}(x,R)$,
	\begin{align*}
h(z,v-u;\varphi,\tau_u\omega)  &\leq
	h(z,v-u;\tilde{\varphi},\tau_u\omega) && \text{by Lemma
		\ref{lem:monotoneGW} since $\varphi \leq \tilde{\varphi}$}\\
	& = h(z,v-u;\varphi' + m,\tau_u\omega) && \text{($\omega
	\in
		A_{nx,nR,nt,\alpha}$ and $\tilde{\varphi} =
		(\varphi' + m)$ on
		$n\mathcal{B}(x,R+\alpha
		t)$)
	}\\
	 & = h(z,v-u;\varphi',\tau_u\omega) + m && \text{by Lemma
	\ref{lem:transinv}}.
	\end{align*}
	We can prove the other inequality by exchanging $\varphi$ and $\varphi'$
	which concludes this proof.
\end{proof}

Let us conclude this section by the next corollary which will be very useful
later on.
\begin{coro}[Asymptotic locality]\label{coro:asymlocal}
	There exists $\alpha > 1$ and a subset $\Omega_0 \subseteq \Omega$ of probability $1$ such
	that for all $\omega \in
	\Omega_0$, $x \in \R^2$, $R\geq0$,
	$(s,t) \in \R^2$ with $0 \leq s \leq t$, there exists $N(\omega)\in
	\N$ such that
	for all $n \geq N(\omega)$ and all
	$\varphi,\varphi' \in \Gamma$:
	\begin{equation}\label{eq:asymplipGW}
	\begin{aligned}
	& \sup_{s\leq u\leq v\leq t}\left\|h\left(n\cdot, \lfloor n \cdot
	\rfloor,n(v-u);\varphi,\tau_{nu}\omega\right) -
	h\left(n\cdot,
	\lfloor n \cdot \rfloor,n(v-u);\varphi',\tau_{nu}\omega\right)
	\right\| _{\infty}^{\mathcal{B}(x,R)} \\
	& \hspace{8cm} \leq
	\|\varphi(n\cdot,\lfloor n\cdot \rfloor)-\varphi'(n\cdot,\lfloor
	n\cdot \rfloor)\|_{\infty}^{\mathcal{B}(x,R+\alpha(t-s))}.
	\end{aligned}
	\end{equation}
\end{coro}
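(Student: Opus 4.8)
The plan is to turn the high-probability estimate of Lemma~\ref{lem:locality} into an almost sure one via Borel--Cantelli, and then to upgrade from a countable dense family of parameters to arbitrary $(x,R,s,t)$ by leaving a little slack in the propagation speed. Write $\alpha_0:=\sqrt{24}\,e$ for the constant of Proposition~\ref{prop:prolinGW} and set $\alpha:=\alpha_0+1$: Lemma~\ref{lem:locality} is to be applied with $\alpha_0$, and the extra unit of speed is exactly what will absorb the discretisation errors. Let $\Omega_{\mathrm{wd}}$ be the full-measure event on which the dynamics is everywhere well defined (Remark~\ref{rem:welldefined}), and, for each rational quadruple $q=(x_q,R_q,s_q,t_q)$ with $x_q\in\Q^2$, $R_q\in\Q\cap(0,\infty)$, $s_q,t_q\in\Q$, $s_q<t_q$, let $\mathcal{E}(q)$ be the event that~\eqref{eq:cor3} (with parameters $x_q,R_q,s_q,t_q$ and constant $\alpha_0$) holds for all $\varphi,\varphi'\in\Gamma$ and all $ns_q\le u\le v\le nt_q$ as soon as $n$ is large. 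By Lemma~\ref{lem:locality} the complement of the $n$-th of these events has probability $\bigo(e^{-cn})$ for some $c=c(q)>0$ (here $t_q>s_q$ is used), hence summable, so $\Pro(\mathcal{E}(q))=1$ by Borel--Cantelli; put $\Omega_0:=\Omega_{\mathrm{wd}}\cap\bigcap_q\mathcal{E}(q)$, a countable intersection of full-measure events.

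Now fix $\omega\in\Omega_0$ and $x\in\R^2$, $R\ge0$, $0\le s\le t$. The case $s=t$ is trivial: running for a null time, both sides of~\eqref{eq:asymplipGW} equal $\|\varphi(n\cdot,\lfloor n\cdot\rfloor)-\varphi'(n\cdot,\lfloor n\cdot\rfloor)\|_\infty^{\mathcal{B}(x,R)}$. So assume $s<t$, fix $\delta>0$ with $(3+2\alpha_0)\delta<t-s$, and choose a rational quadruple $q$ with $\|x-x_q\|_\infty<\delta/2$, $R+\delta<R_q<R+2\delta$, $s-\delta<s_q<s$, $t<t_q<t+\delta$; in particular $[s,t]\subset(s_q,t_q)$. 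I would then take a natural number $N=N(\omega,x,R,s,t)$ large enough that $1/n<\delta/2$ and that the defining property of $\mathcal{E}(q)$ holds at $n$. The choice of $q$ yields two inclusions of lattice balls (recall $n\mathcal{B}(\cdot,\cdot)$ is the sup-norm ball intersected with $\R\times\Z$): for $z\in\mathcal{B}(x,R)$ the point $(nz_1,\lfloor nz_2\rfloor)$ lies at sup-distance $\le nR+1$ from $nx$, hence at distance $<n(R+\delta)<nR_q$ from $nx_q$ (using $1<n\delta/2$ and $\|x-x_q\|_\infty<\delta/2$), so it belongs to $n\mathcal{B}(x_q,R_q)$; and every lattice point $w\in n\mathcal{B}(x_q,R_q+\alpha_0(t_q-s_q))$ equals $(nw_1',\lfloor nw_2'\rfloor)$ for $w':=w/n$ with
\[
\|w-nx\|_\infty< n\bigl(R_q+\alpha_0(t_q-s_q)+\tfrac{\delta}{2}\bigr)< n\bigl(R+\alpha_0(t-s)+(3+2\alpha_0)\delta\bigr)< n\bigl(R+\alpha(t-s)\bigr),
\]
so $w'\in\mathcal{B}(x,R+\alpha(t-s))$.

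Finally, for $n\ge N$, $\varphi,\varphi'\in\Gamma$ and $s\le u\le v\le t$ one has $nu\in[ns_q,nt_q]$, $nv\in[nu,nt_q]$, so applying~\eqref{eq:cor3} for $q$ with microscopic times $nu,nv$, sandwiched between the two ball inclusions above (the first enlarges the domain of the left-hand supremum, the second shrinks the resulting supremum of $|\varphi-\varphi'|$ into a ball around $x$), gives
\[
\bigl\|h(n\cdot,\lfloor n\cdot\rfloor,n(v-u);\varphi,\tau_{nu}\omega)-h(n\cdot,\lfloor n\cdot\rfloor,n(v-u);\varphi',\tau_{nu}\omega)\bigr\|_\infty^{\mathcal{B}(x,R)}\le\|\varphi(n\cdot,\lfloor n\cdot\rfloor)-\varphi'(n\cdot,\lfloor n\cdot\rfloor)\|_\infty^{\mathcal{B}(x,R+\alpha(t-s))};
\]
taking the supremum over $s\le u\le v\le t$ yields~\eqref{eq:asymplipGW}. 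The only genuinely delicate point is the quantifier order, i.e.\ obtaining one $\Omega_0$ that works for all real $(x,R,s,t)$ at once: the locality estimate is not monotone in $R$ and in $t-s$ in the direction a bare density argument would need, and the remedy is exactly the device above --- prove the countable version with the sharp speed $\alpha_0$ but state the corollary with the strictly larger $\alpha$, whose surplus swallows the approximation errors.
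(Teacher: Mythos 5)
Your proof is correct and follows exactly the route the paper sketches for this corollary: Borel--Cantelli applied to the high-probability events of Lemma~\ref{lem:locality} over a countable family of rational parameters, followed by rational approximation in $(x,R,s,t)$, with the enlargement from $\alpha_0$ to $\alpha=\alpha_0+1$ absorbing the discretisation and approximation errors. The ball inclusions and the choice $(3+2\alpha_0)\delta<t-s$ are carried out correctly, so nothing is missing.
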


The proof follows easily from
Lemma \ref{lem:locality}, Borel-Cantelli Lemma and rational approximation (up
to choosing an $\alpha$ slightly larger than the one in Lemma
\ref{lem:locality}).

\subsection{Definition of the sequence of random discrete
semi-groups}\label{sec:defopGW}

We are going to define a sequence of functions $(S_n(s,t,f,\omega))_{n\in \N}$
describing the rescaled dynamic,  between times
$s$ and $t$ and Poissonian creations $\omega$, starting at time $s$ from an initial height profile close
to a continuous function $f$.
\begin{defi}\label{defi:S_n}
	For all $\omega \in \Omega$ and $(s,t) \in \T :=\{ (s,t) \in [0,T]^2, \, s
	\leq t\}$, we define
	\begin{equation}
	S_n(s,t;.,\omega) :\left\{
	\begin{aligned}
	& \bar{\Gamma}  & \longrightarrow&  \F(\R^2) \\
	& f  & \longmapsto & \frac{1}{n} h(n\cdot,\lfloor n\cdot
	\rfloor,
	n(t-s);\varphi_n^f,\tau_{ns}\omega),
	\end{aligned}
	\right.
	\end{equation}
	where $\F(\R^2)$ denotes the
	set of functions from $\R^2$ to $\R$, $\tau_{ns}$ is the temporal
	translation defined in \eqref{eq:translation} and $\varphi_n^f$ is the
	height profile in $\Gamma$ approaching $f$ as in the following Proposition
	\ref{prop:defphiGW}.
\end{defi}

\begin{prop}\label{prop:defphiGW}
	For all $n \in \N$, there exists a mapping
	\begin{equation}
	\left\{
	\begin{aligned}
	\bar{\Gamma} &\longrightarrow& \Gamma \\
	f &\longmapsto& \varphi_n^f
	\end{aligned}
	\right.
	\end{equation}
	satisfying that for all $c \in \R$ and $f \in \bar{\Gamma}$,
	$\varphi_n^{f+n^{-1}\lfloor n c\rfloor} = \varphi_n^{f}
	+ \lfloor n c\rfloor$, and such that
	\begin{equation}\label{eq:approxphiGW}
	\sup_{x,y \in \R^2} \left|\frac{1}{n}\varphi_n^f(nx,\lfloor ny \rfloor) - f(x,y)\right|
	\leq \frac{2}{n},
	\end{equation}
	Therefore, the sequence of functions $(\varphi_n^f)_{n \in \N}$ approaches
	$f$ in the sense of \eqref{eq:hypcondinitialGW}.
\end{prop}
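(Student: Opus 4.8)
The plan is to reduce the statement to a single deterministic construction: given $f\in\bar{\Gamma}$ and $n$, build a function $\varphi\in\Gamma$ that stays within distance $<1$ of the rescaled profile $g:=g_n^f$, $g(u,v):=n\,f(u/n,v/n)$, on $\R\times\Z$, by a rule that is equivariant under adding an integer to $g$; then set $\varphi_n^f:=\varphi$. Note first that, by \eqref{eq:gammabar}, $g$ is continuous and $g(u,v_2)-g(u,v_1)\in[-(v_2-v_1),0]$ for $v_1\le v_2$; in particular $v\mapsto g(u,v)$ is $1$-Lipschitz and $g(u,m+1)-g(u,m)\in[-1,0]$ for all $m\in\Z$. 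Granting such a $\varphi$, for $(x,y)\in\R^2$ with $m:=\lfloor ny\rfloor$ one gets
\[
\big|\varphi_n^f(nx,m)-n\,f(x,y)\big|\;\le\;\big|\varphi(nx,m)-g(nx,m)\big|+n\big|f(x,\tfrac mn)-f(x,y)\big|\;<\;1+n\big|\tfrac mn-y\big|\;\le\;2,
\]
which is exactly \eqref{eq:approxphiGW} (hence \eqref{eq:hypcondinitialGW}); and since $g_n^{\,f+n^{-1}\lfloor nc\rfloor}=g+\lfloor nc\rfloor$ differs from $g$ by the \emph{integer} $\lfloor nc\rfloor$, equivariance of the rule yields $\varphi_n^{\,f+n^{-1}\lfloor nc\rfloor}=\varphi_n^f+\lfloor nc\rfloor$.

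The construction of $\varphi$ is by rounding $g$ along an $x$-grid \emph{common to all horizontal lines}. Concretely, one chooses a locally finite set $\{a_j\}_{j\in\Z}\subset\R$ (to be made fine enough below), and for $u\in\R$ lets $\underline u$ be the largest $a_j\le u$; one sets $\varphi(u,m):=\big\lfloor g(\underline u,m)+\tfrac12\big\rfloor$ on the interiors of the cells $(a_j,a_{j+1})$ and $\varphi(a_j,m):=\max\big(\lfloor g(a_{j-1},m)+\tfrac12\rfloor,\lfloor g(a_j,m)+\tfrac12\rfloor\big)$ at grid points (the $\max$ only to make $\varphi(\cdot,m)$ upper semicontinuous). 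The point of sampling \emph{every} line at the same grid is that condition~2 of Definition~\ref{defi:statespace} then holds for free and irrespective of the grid: for $u$ in a cell interior, $\varphi(u,m+1)-\varphi(u,m)=\lfloor g(\underline u,m+1)+\tfrac12\rfloor-\lfloor g(\underline u,m)+\tfrac12\rfloor\in\{-1,0\}$ because $t\mapsto\lfloor t+\tfrac12\rfloor$ is non-decreasing with $\lfloor t-1+\tfrac12\rfloor=\lfloor t+\tfrac12\rfloor-1$ and $g(\underline u,m+1)-g(\underline u,m)\in[-1,0]$; at grid points the same two inequalities pass through the $\max$ term by term. Condition~1 (piecewise constant with locally finitely many $\pm1$ jumps, upper semicontinuous) and the bound $|\varphi-g|<1$ on $\R\times\Z$ then follow \emph{provided} the grid is fine enough that $g(\cdot,m)$ oscillates by less than $1$ on every cell it meets (so that roundings of consecutive samples differ by at most $1$): indeed $|\varphi(u,m)-g(u,m)|\le\tfrac12+(\text{oscillation on the cell})$, and at grid points one uses $|\max(A,B)-C|\le\max(|A-C|,|B-C|)$. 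Equivariance is clear once the grid is chosen to depend on $g$ only through its oscillation data, which is invariant under $g\mapsto g+(\text{const})$, together with $\lfloor(t+k)+\tfrac12\rfloor=\lfloor t+\tfrac12\rfloor+k$ and $\max$ commuting with $+k$.

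The one real difficulty is the clause ``fine enough on each cell for \emph{every} line $m$''. A single locally finite grid cannot achieve this when the modulus of continuity of $g(\cdot,m)$ degrades as $|m|\to\infty$ (which $f\in\bar{\Gamma}$ permits, being merely continuous), so the break points must be refined differently on different lines; but condition~2 forced us to compare lines $m$ and $m+1$ on a common set of sampling points, and iterating this forces, on every bounded region, one partition usable for all lines. I would resolve this by exhausting $\R\times\Z$ by boxes $B_R:=[-R,R]\times\llbracket -R,R\rrbracket$: on $B_R$ the function $g$ is uniformly continuous, only the lines $|m|\le R$ genuinely affect the piece of $\varphi$ inside $B_R$, and one constructs the partition inductively in $R$ — refining the previous partition on $[-R,R]$ and extending it outward, each step adding only finitely many break points and controlling the roundings on those finitely many relevant lines, the lines $|m|>R$ being handled at a later stage. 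Keeping the $\pm1$-jump property across such $R$-boundaries and across lines (occasionally one must insert a single intermediate step) is routine bookkeeping, and is where the care goes; the vertical-increment constraint, by contrast, is automatic from the common-grid device and is the conceptual crux.
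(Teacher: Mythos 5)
Your reduction to a deterministic construction, the derivation of \eqref{eq:approxphiGW} from a pointwise bound $|\varphi-g|<1$ together with the $1$-Lipschitz property of $f$ in $y$, and the equivariance argument are all fine (modulo a small quantitative slip: to get $|\varphi(u,m)-g(u,m)|<1$ from ``$\tfrac12+\text{oscillation}$'' you need oscillation $<\tfrac12$ per cell, not $<1$). The genuine gap is exactly the point you flag and then declare ``routine bookkeeping'': the common-grid device cannot be globalized. A function $f\in\bar{\Gamma}$ is only continuous, so its modulus of continuity in $x$ on a fixed compact $x$-interval may degrade arbitrarily as $|y|\to\infty$ (the vertical constraint in \eqref{eq:gammabar} imposes nothing on $x$-regularity; e.g.\ $f(x,y)=-y/2+\tfrac12 d(y,\Z)\sin(\lambda_{\lfloor y\rfloor}x)$ with $\lambda_k\to\infty$ lies in $\bar\Gamma$). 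Hence the mesh required on $[0,1]$, say, to control line $m$ tends to $0$ as $|m|\to\infty$, and \emph{no} locally finite common grid exists. Your inductive exhaustion does not escape this: at stage $R$ you must further refine the partition on every previously treated interval to accommodate the new lines, so the refinement on a fixed compact never stabilizes, the limiting break-point set is not locally finite (so $\underline u$ is ill-defined and condition~1 of Definition~\ref{defi:statespace} is threatened), and the values $\varphi(u,m)$ on the already-treated lines keep changing at each stage with no guaranteed limit. Moreover the claim that only lines $|m|\le R$ affect $\varphi$ on $B_R$ is not usable here, since condition~2 couples line $R$ with line $R+1$ at every $x\in[-R,R]$. ``Inserting a single intermediate step'' to repair an oversized jump also fails: any break point you insert belongs to the common grid and perturbs all other lines.

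The paper's proof avoids the dilemma by giving up the common grid and instead making condition~2 checkable across \emph{unrelated} partitions. On each line $y$ separately it defines break points $X_i^y$ as the successive times at which $nf(\cdot/n,y/n)$ has moved by exactly $1$ from the last recorded value, and it records the \emph{exact} value $nf(X_i^y/n,y/n)$ (an integer by construction) rather than a rounded sample; these cells automatically adapt to each line's own modulus of continuity, so local finiteness and $\pm1$ jumps are immediate. Condition~2 is then verified directly: with $x$ lying in cell $[X_i^y,X_{i+1}^y)$ on line $y$ and $[X_j^{y+1},X_{j+1}^{y+1})$ on line $y+1$, the integer $\varphi_n^f(x,y+1)-\varphi_n^f(x,y)$ equals a vertical increment of $nf$ (in $[-1,0]$ by \eqref{eq:gammabar}) plus a horizontal increment within a single cell (in $(-1,1)$ by the stopping rule), hence lies in $(-2,1)\cap\Z=\{-1,0\}$. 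That integrality-plus-interval argument is the idea your proposal is missing; without it, or some substitute for it, the construction does not go through for general $f\in\bar{\Gamma}$.
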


\begin{rem}
  We cannot just choose
  $\varphi_n^f := (x,y) \mapsto \lfloor n f(n^{-1}x,n^{-1}y) \rfloor$
  because it could possibly have an accumulation point of
  discontinuities if $f$ oscillates too much; this would violate the first
  condition  in Definition \ref{defi:statespace}.
\end{rem}

\begin{proof}
	For any fixed $y \in \Z$, we are going to define $\varphi_n^f(\cdot,y)$
as piecewise	constant  on $\R_+$ (we will construct it similarly on $\R_-$).
	Let us define
	inductively $X_0^y=0$, $\varphi_n^f(0,y) := \lfloor nf(0,y/n) \rfloor$
	and
		\[
		\begin{aligned}
			&X_{i+1}^y := \inf\{x \geq X_i^y, \, |n f(x/n,y/n)-
			\varphi_n^f(X_i^y,y)|
		\geq 1 \}  \qquad \text{(with $\inf \emptyset = +\infty$)}\\
		& \varphi_n^f(\cdot,y) =
		\varphi_n^f(X_{i}^y,y) \quad \text{on $(X_i^y,X_{i+1}^y)$}\\
		& \varphi_n^f(X_{i+1}^y,y) =  n f(X_{i+1}^y/n,y/n).
		\end{aligned}
	\]
	By induction and by continuity of $f$, $\varphi_n^f(X_{i}^y,y) \in \Z$ for
	all $i$. Still by continuity, $X_{i+1}^y>X_i^y$ and
	$\{X_i^y, \, i \in \N\}$ is a locally finite subset of $\R_+$ with
	$\lim_{i \to \infty}X_i^y
	= +\infty$. Similarly, we construct $\varphi_n^f(\cdot,y)$ on negative real
	numbers. Up to
	modifying the value at discontinuity points, we
	obtain a function $\varphi_n^f(\cdot,y)$ which satisfies point 1 of
	Definition
	\ref{defi:statespace} and which satisfies the translation invariance
	property $\varphi_n^{f+n^{-1}c} = \varphi_n^{f}
	+ c$ for all $c \in \Z$ by construction.
	Moreover, by construction, for all $(x,y)
	\in \R \times \Z$,
	$|\varphi_n^f(x,y) -
	nf(x/n,y/n)| \leq 1$ and thus
	\begin{align*}
	&\sup_{(x,y) \in \R^2}|\frac{1}{n}\varphi_n^f(nx,\lfloor ny \rfloor) -
	f(x,y)| \\
	& \hspace{3cm} \leq \sup_{(x,y) \in \R^2}|\frac{1}{n}\varphi_n^f(nx,\lfloor
	ny
	\rfloor) -
	f(x,\lfloor ny \rfloor/n)| + \sup_{(x,y) \in \R^2}|f(x,\lfloor ny
	\rfloor/n)-f(x,y)| \\
	& \hspace{3cm} \leq \frac{1}{n} + |\lfloor ny \rfloor/n-y| \leq \frac{2}{n} \qquad
	\text{because
		$f \in
		\bar{\Gamma}$}.
	\end{align*}
	It remains to check that $\varphi_n^f$ satisfies point 2 of Definition
	\ref{defi:statespace} and hence is  in
	${\Gamma}$. To do this, let us fix $y \in \Z$ and show that for all
	$x\geq0$, $\varphi_n^{f}(x,y+1)-\varphi_n^{f}(x,y) \in \{-1,0\}$ (the case
	$x<0$ being similar). Let $x \geq 0$ and  $i,j$ be the unique integers such that $X_i^y \leq x < X_{i+1}^y$ and $X_j^{y+1} \leq x < X_{j+1}^{y+1}$. By construction of $\varphi_n^f$,
	\begin{align*}
	\varphi_n^f(x,y) &=  n
	f(X_i^y/n,y/n) \\
	\varphi_n^f(x,y+1) &=  n
	f(X_j^{y+1}/n,(y+1)/n) \ .
	\end{align*}
	There are two cases: either $X_j^{y+1} \in [X_i^y,X_{i+1}^{y})$ or $X_i^{y} \in [X_j^{y+1},X_{j+1}^{y+1})$. Since they are similar, we will just treat the first one. By definition of $X_{i+1}^y$, for all $z \in  [X_i^y, X_{i+1}^y)$ we have
	$|nf(z/n,y/n)-nf(X_i^y/n,y/n)|<1$ and thus
	\begin{align*}
	& \varphi_n^f(x,y+1) - \varphi_n^f(x,y)  =n f(X_j^{y+1}/n,(y+1)/n) -
n	f(X_i^y/n,y/n)  \\
	& =  \underbrace{nf(X_j^{y+1}/n,(y+1)/n) -
	nf(X_j^{y+1}/n,y/n)}_{\in[-1,0] \text{ since }f \in \bar{\Gamma}} +
	\underbrace{nf(X_j^{y+1}/n,y/n) - nf(X_i^y/n,y/n)}_{\in (-1,1) \text{ since
	} X_j^{y+1} \in [X_i^y,X_{i+1}^{y}) }.
	\end{align*}
	Finally, $\varphi_n^f(x,y+1) - \varphi_n^f(x,y) \in (-2,1)\cap \Z =
	\{-1,0\}$.
\end{proof}

\section{Compactness}

\subsection{Control on spatio-temporal height differences}\label{sec:control}

In this section, we control the spatio-temporal gradients of the
height function following the Gates-Westcott dynamic by comparison
with the PNG dynamic.  By construction,
$(x,t)\mapsto h(x,y,t;\varphi,\omega)$ follows the PNG dynamic (see
e.g \cite[Section 2]{ferrari2005one} for an introduction to the model)
starting from initial condition $\varphi(\cdot,y)$ with creation
locations given by~$\omega^{\varphi}_y$ as in Definition \ref{defi:omegaphi}.
This simple remark allows us
to use the representation of PNG model in terms of directed polymer on
Poisson points (see \cite[Section 3.1]{ferrari2005one}). First, we need
to introduce some new definitions.

\begin{defi}\label{defi:light}
For any finite set $A \subseteq
\R^2$, we define $L^{\uparrow}(A)$ as the maximal number of
points in $A$ that can be collected by a
\emph{light-path} i.e a continuous path $\gamma : [0,1] \to \R^2$ satisfying that
for any $0 \leq a \leq b$, we have $\gamma(b) - \gamma(a) \in \{(x,t)\in \R^2 \ |x|
\leq  t \}$.

We say that a rectangle $R \subseteq \R^2$ is a \emph{light-rectangle} if its
sides are
	parallel to the straight lines ${t=x}$ or ${t=-x}$. For any $s<t$ and
	$(x,s),(y,t)$ such
	that $|y-x| \leq t-s$, we note $R_{(x,s),(y,t)}$ the unique
	light-rectangle of diagonal $[(x,s),(y,t)]$.
\end{defi}

\begin{rem}\label{rem:lightrectangle}
We let the reader check that the area of $R_{(x,s),(y,t)}$ is $((t-s)^2-(y-x)^2)/2$ and that if $|x'-x|\leq s-s'$ then
	$R_{(x,s),(y,t)} \subseteq R_{(x',s'),(y,t)}$ while if $|y'-y|\leq
	t'-t$ then $R_{(x,s),(y,t)} \subseteq R_{(x,s),(y',t')}$.
\end{rem}

The next Lemma is an easy extension to arbitrary initial conditions of the
equivalence between the PNG and the directed polymers model as explained in
\cite[Section 2.3 and 3.1]{ferrari2005one} for special ``droplet'' and ``flat''
initial conditions (see also Figure \ref{fig:rep_graph}).

\begin{lem}\label{lem:varformPNG}
	For all $(x,y,t) \in \R \times \Z \times \R_+$, all $\varphi \in \Gamma$
	and $\omega \in \Omega$,
	\begin{equation}\label{eq:varform}
	h(x,y,t;\varphi,\omega) = \sup_{z \in [x-t,x+t]}\{\varphi(z,y) +
	L^{\uparrow}(\omega^{\varphi}_y \cap R_{(z,0),(x,t)}) \},
	\end{equation}
	and the supremum is attained for some $z\in[x-t,x+t]$.
\end{lem}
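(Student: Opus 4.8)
The plan is to prove Lemma \ref{lem:varformPNG} by reducing the Gates-Westcott dynamics along a fixed line $y$ to the standard PNG dynamics, and then invoking the well-known directed-polymer representation of PNG. As noted just before the statement, the map $(x,t)\mapsto h(x,y,t;\varphi,\omega)$ evolves according to the PNG dynamics on $\R$ started from $\varphi(\cdot,y)$ with nucleation events at the space-time points $\omega^{\varphi}_y$ (the \emph{effective} creations on line $y$, see Definition \ref{defi:omegaphi}): lateral growth at speed one and annihilation of colliding kink/antikink are exactly the deterministic PNG rules, and a Gates-Westcott creation on line $y$ that actually raises the height is precisely a PNG nucleation. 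So it suffices to prove the variational formula \eqref{eq:varform} for the PNG dynamics started from an arbitrary admissible (piecewise constant, $\pm1$ jumps, upper semi-continuous) initial profile $\psi:=\varphi(\cdot,y)$ with a locally finite set $N:=\omega^{\varphi}_y\subseteq\R^2$ of nucleations. Since the evolution at $(x,t)$ depends only on nucleations in the backward light cone $C^-_{x,t}$ and on $\psi$ on $[x-t,x+t]$ (speed-one propagation), and $N\cap C^-_{x,t}$ is almost surely finite, I may assume there are finitely many nucleations; the general case follows by a monotone limit.

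First I would set up the right-hand side of \eqref{eq:varform} as a candidate and check it solves the same dynamics. Define $G(x,t):=\sup_{z\in[x-t,x+t]}\{\psi(z,y)+L^{\uparrow}(N\cap R_{(z,0),(x,t)})\}$. I would first observe the supremum is attained: $z\mapsto\psi(z,y)$ is upper semi-continuous and has only finitely many jump values on the compact interval, and $z\mapsto L^{\uparrow}(N\cap R_{(z,0),(x,t)})$ is piecewise constant and non-increasing in a suitable sense (as $z$ moves toward $x$ the light-rectangle shrinks, cf. Remark \ref{rem:lightrectangle}), so the sup is a max. The main work is then a two-sided comparison. For the lower bound $h\ge G$: given the optimal $z$ and an optimal light-path $\gamma$ collecting $L^{\uparrow}(N\cap R_{(z,0),(x,t)})$ nucleations inside that light-rectangle, one tracks a terrace edge emanating from height level $\psi(z,y)$ and argues that each nucleation collected by $\gamma$ contributes $+1$ to the height at $(x,t)$ — this is the classical "last-passage from the polymer" half of the PNG correspondence, and the rectangle geometry guarantees the path stays causal. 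For the upper bound $h\le G$: proceed by induction on the number of nucleations. Before the first nucleation time the dynamics is purely deterministic lateral growth/annihilation, for which $h(x,t)=\sup_{z\in[x-t,x+t]}\psi(z,y)$ is elementary (a kink at $a$ and antikink at $b>a$ with height $k$ in between yields height $\ge k+1$ exactly on the expanding interval, which is precisely the sup of the staircase $\psi$ over the backward cone). When a nucleation occurs at $(w,\tau)\in N$, by the Markov property the height for $t>\tau$ is obtained by running the deterministic dynamics from the post-nucleation profile; comparing the variational formula just before and just after $\tau$ reduces to the identity $L^{\uparrow}(A\cup\{p\})=\max(L^{\uparrow}(A),\,1+\sup\{L^{\uparrow}(A\cap C^-_p)\}+L^{\uparrow}(A\cap C^+_p))$ for the new point $p$, together with the semigroup/concatenation property $R_{(z,0),(x,t)}\supseteq R_{(z,0),(w,\tau)}\sqcup R_{(w,\tau),(x,t)}$ when $p=(w,\tau)$ lies on the boundary light-ray — i.e. a light-path through $p$ splits into two light-paths.

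Concretely I would structure it as: (1) reduce to PNG on line $y$ and to finitely many nucleations by the locality remark; (2) prove the deterministic base case $h(x,t)=\sup_{z\in[x-t,x+t]}\psi(z,y)$; (3) prove existence of the maximizer $z$; (4) establish the combinatorial splitting identity for $L^{\uparrow}$ across a new point on a light-ray and the rectangle-concatenation fact; (5) run the induction over nucleation times, using Lemma \ref{lem:markovGW} to split time at each nucleation; (6) pass to the limit in the number of nucleations via monotonicity (the RHS is monotone and continuous under adding far-away nucleations, which have no effect inside the relevant cone). The main obstacle, I expect, is step (5) combined with getting the boundary/upper-semicontinuity bookkeeping exactly right: one must be careful that a nucleation is "effective" (recorded in $\omega^{\varphi}_y=N$) exactly when it genuinely increases the height, so the set $N$ against which $L^{\uparrow}$ is computed is the correct one, and that the $\pm1$ and upper-semicontinuity conventions of Definition \ref{defi:statespace} match the conventions implicit in the directed-polymer picture; this is where a naive argument could slip. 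Everything else is a faithful adaptation of the flat/droplet computation in \cite[Sections 2.3, 3.1]{ferrari2005one} to a general staircase initial condition, so I would cite that for the geometric heart of steps (2) and (4) and only spell out the initial-condition-dependent parts in detail.
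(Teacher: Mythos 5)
Your proposal is correct, and it is more detailed than what the paper actually provides: the paper gives no written proof of Lemma \ref{lem:varformPNG}, asserting it as ``an easy extension'' of the PNG/directed-polymer correspondence of \cite[Sections 2.3 and 3.1]{ferrari2005one} and justifying it via the graphical (level-line) representation of Figure \ref{fig:rep_graph} --- the height difference between two space-time points equals the number of kink/antikink world-lines crossed by any causal path, and an optimal light-path collects exactly one effective creation per line crossed. Your architecture is different: you verify that both sides of \eqref{eq:varform} satisfy the same recursion, via (i) the deterministic base case $h=\sup_{|z-x|\le t}\psi(z)$, (ii) the last-passage splitting identity for $L^{\uparrow}$ at a new nucleation together with rectangle concatenation, and (iii) an induction over nucleation times using Lemma \ref{lem:markovGW}, after first reducing to finitely many nucleations by speed-one locality. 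This buys a self-contained, checkable argument for arbitrary staircase initial data (precisely the part that is ``new'' relative to the droplet/flat cases in \cite{ferrari2005one}), at the cost of more bookkeeping; the paper's line-counting argument handles arbitrary initial conditions uniformly but is only sketched. You also correctly isolate the genuine subtlety, namely that the formula must be read against the \emph{effective} creations $\omega^{\varphi}_y$, so that line $y$ is an honest autonomous PNG once that set is given. One minor inaccuracy: the light-rectangle $R_{(z,0),(x,t)}$ is \emph{not} monotone (nor shrinking) as $z$ moves toward $x$ --- its area $((t)^2-(x-z)^2)/2$ actually grows; the attainment of the supremum should instead be argued from the fact that, for finite $N$, the map $z\mapsto L^{\uparrow}(N\cap R_{(z,0),(x,t)})$ is piecewise constant and upper semi-continuous (the rectangle being closed and depending continuously on $z$), so the sum with the u.s.c.\ function $\psi$ attains its maximum on the compact interval. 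This does not affect the validity of the rest of the argument.
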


\begin{figure}[htbp]
	\begin{center}
		\includegraphics[scale=0.8]{./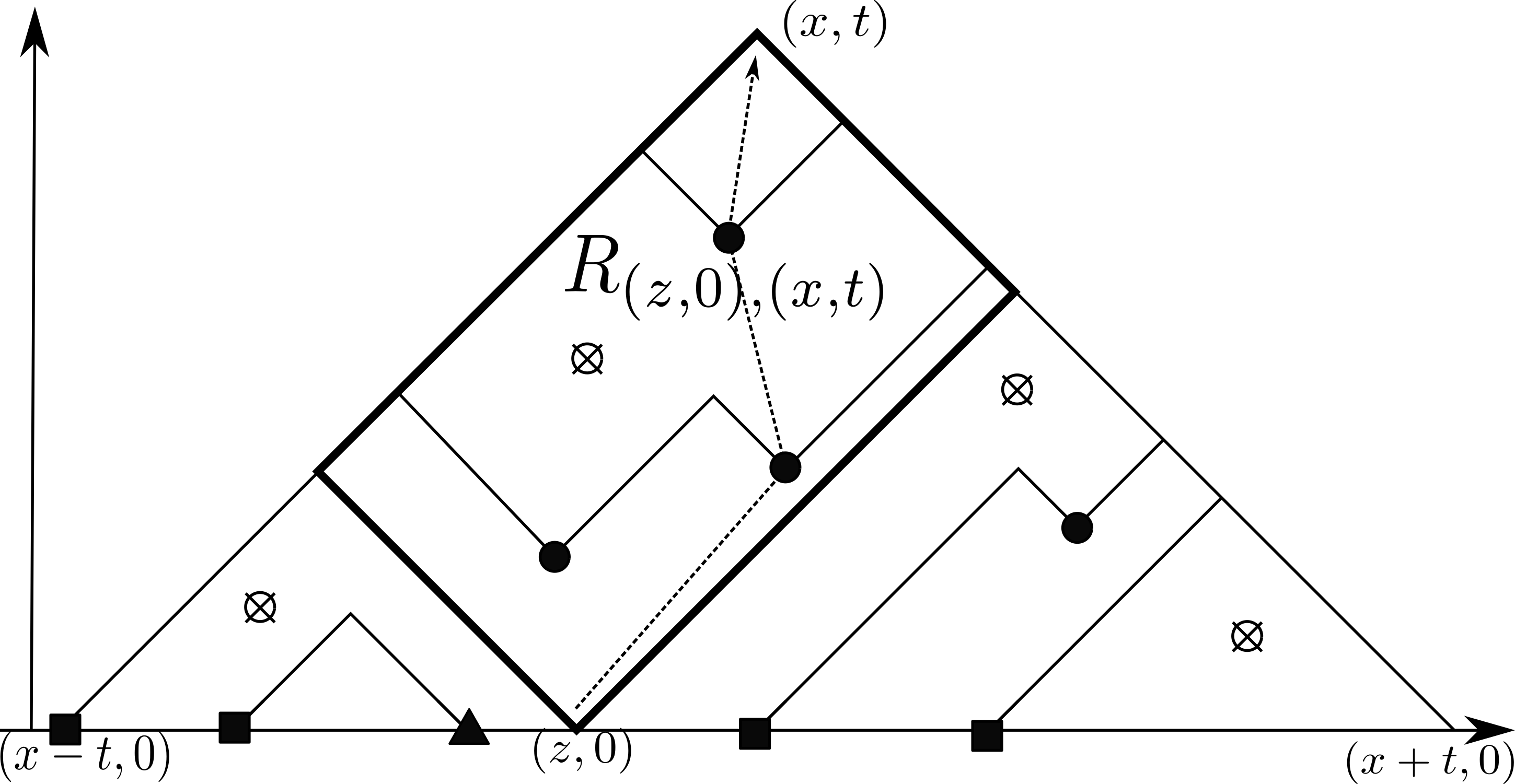}
	\end{center}
	\vspace{-0.5cm}
	\caption{\footnotesize The graphical representation of the PNG and its
	interpretation in terms of
	Directed Polymer. The lines are drawn from the initial antikinks (triangles)
	and kinks (squares) of $\varphi(\cdot,y)$
	and the effective creations in $\omega^\varphi_y$ (filled circles) (the
	creations in $\omega_y\setminus \omega_y^\varphi$ are marked by crossed
	circles and can be ignored). The height difference
	between two points is equal to the number of lines crossed by any
	light-path joining these points. A path touches at most one effective
	creation per line crossed and conversely, we can always find a path passing
	by effective creations that realises the
	maximum in the variational formula of  Lemma \ref{lem:varformPNG}.}
	\label{fig:rep_graph}
\end{figure}

In order to control the space gradients of the interface, we need an upper
bound
on $L^{\uparrow}(\omega_y^{\varphi} \cap R)$ (or on
$L^{\uparrow}(\omega_y \cap R)$ since $\omega_y^{\varphi} \subseteq
\omega_y$) for large rectangles $R$. This quantity is
well studied as it is related to the length of the longest increasing
subsequence of
a random uniform permutation, which was
shown first by Hammersley to behave like the square root of the number
of Poisson points in $R$ (this is also known as Ulam's problem;
see \cite{hammersley1972few}).
\begin{lem}\label{lem:lis}
	There
	exists a constant $c>0$ such that for all $\omega$ in a subset of $\Omega$
	of probability $1$, for
	all
	light-rectangle $R\subseteq \R^2$ and all $Y>0$,
	\begin{equation}\label{eq:lis}
	\limsup_{n \to \infty} \sup_{y \in \llbracket -nY,nY
	\rrbracket}\frac{1}{n}L^{\uparrow}(\omega_{y} \cap nR)
	\leq c \, \sqrt{\Leb(R)},
	\end{equation}
	where $\Leb(R)$ is the area of $R$. Therefore, up to intersecting this
	subset of
	probability $1$ with $\Omega_0$ (defined in Corollary \ref{coro:asymlocal})
	we can assume that \eqref{eq:lis} holds for all $\omega \in \Omega_0$.
\end{lem}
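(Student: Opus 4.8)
The plan is to reduce the statement to a large-deviation upper bound for the length of the longest increasing subsequence (LIS) of a planar Poisson process, and then to upgrade it to an almost sure \emph{uniform} statement by a union bound over the $\mathrm{O}(n)$ lines $y$ together with Borel--Cantelli. First I would perform the change of variables $T:(x,t)\mapsto (t+x,\,t-x)$. This linear map has Jacobian of absolute value $2$; it sends the forward light cone $\{|x|\le t\}$ onto the positive quadrant, hence light-paths to coordinatewise non-decreasing paths and light-rectangles to axis-parallel rectangles, and it multiplies areas by $2$. Since $\omega_y$, viewed as a subset of $\R\times\R_+$, is a Poisson process of intensity $2$, its image $T(\omega_y)$ is a Poisson process of intensity $1$; moreover $L^{\uparrow}(\omega_y\cap nR)$ is the length of the longest chain of $T(\omega_y)\cap nT(R)$ for the coordinatewise order (two distinct Poisson points a.s.\ never share a coordinate, so strictness is irrelevant). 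Consequently $L^{\uparrow}(\omega_y\cap nR)$ has the law of the LIS of a rate-$1$ Poisson process in an axis-parallel rectangle of area $\Leb(nT(R))=2n^2\Leb(R)$.

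Next I would invoke the classical first-moment bound for the LIS (see Appendix~\ref{sec:lis}, cf.\ the proof of Proposition~\ref{prop:prolinGW} and \cite{hammersley1972few}): for a rate-$1$ Poisson process $\Pi$ in an axis-parallel rectangle $Q$ and any integer $k$,
\[
\Pro\bigl(\mathrm{LIS}(\Pi)\ge k\bigr)\ \le\ \frac{\Leb(Q)^k}{(k!)^2}\ \le\ \Bigl(\frac{e^2\,\Leb(Q)}{k^2}\Bigr)^{\!k},
\]
the first inequality being Mecke's formula (the expected number of increasing $k$-chains equals $\Leb(Q)^k/(k!)^2$ for an axis-parallel rectangle), the second using $k!\ge(k/e)^k$. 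Fix a light-rectangle $R$ with $\Leb(R)>0$ and $Y>0$, and set $\lambda:=e^{3/2}$, so that $e^2/\lambda^2\le e^{-1}$. Applying the bound with $Q=nT(R)$ and $k=\lceil\lambda n\sqrt{2\Leb(R)}\,\rceil$ gives, for every $y$,
\[
\Pro\Bigl(\tfrac1n L^{\uparrow}(\omega_y\cap nR)\ge \lambda\sqrt{2\Leb(R)}\Bigr)\ \le\ e^{-\lambda n\sqrt{2\Leb(R)}}.
\]
A union bound over the at most $2nY+1$ integers $y\in\llbracket -nY,nY\rrbracket$ then yields
\[
\Pro\Bigl(\sup_{y\in\llbracket -nY,nY\rrbracket}\tfrac1n L^{\uparrow}(\omega_y\cap nR)\ge \lambda\sqrt{2\Leb(R)}\Bigr)\ \le\ (2nY+1)\,e^{-\lambda n\sqrt{2\Leb(R)}},
\]
which is summable in $n$. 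By Borel--Cantelli, almost surely $\limsup_{n}\sup_{y\in\llbracket -nY,nY\rrbracket}\tfrac1n L^{\uparrow}(\omega_y\cap nR)\le \sqrt2\,e^{3/2}\sqrt{\Leb(R)}$; the degenerate case $\Leb(R)=0$ is trivial since then $\omega_y\cap nR=\emptyset$ almost surely for each of the (countably many) $y$. So the lemma holds with $c:=\sqrt2\,e^{3/2}$, the precise value of $c$ being irrelevant.

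To produce a single full-measure event valid for all light-rectangles and all $Y$ at once, I would run the previous step only along the countable family of light-rectangles $R$ whose image $T(R)$ has rational corners and along $Y\in\N$, and intersect the resulting probability-one events. For arbitrary $R$ and $Y>0$, pick a rational light-rectangle $\widetilde R\supseteq R$ with $\Leb(\widetilde R)$ arbitrarily close to $\Leb(R)$ and set $\widetilde Y:=\lceil Y\rceil$; since $L^{\uparrow}$ is monotone under inclusion of the point set and $\llbracket -nY,nY\rrbracket\subseteq\llbracket -n\widetilde Y,n\widetilde Y\rrbracket$, the bound for $(\widetilde R,\widetilde Y)$ gives $\limsup_n\sup_{|y|\le nY}\tfrac1n L^{\uparrow}(\omega_y\cap nR)\le c\sqrt{\Leb(\widetilde R)}$, and letting $\widetilde R\downarrow R$ finishes the argument; finally one intersects this set with $\Omega_0$. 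The only genuinely delicate point is the large-deviation input: the LIS upper tail must decay exponentially in $n$ (not merely polynomially) so that it survives the union bound over the $\Theta(n)$ lines — this is exactly what the first-moment estimate provides, and it is also why the argument is insensitive to the aspect ratio of $R$, the law of the LIS depending on the rectangle only through its area.
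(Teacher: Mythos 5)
Your proposal is correct and follows essentially the same route as the paper: the tail bound $\Pro(L^{\uparrow}(\omega_y\cap R)\ge k)\le(2e^2\Leb(R)/k^2)^k$ you derive via the rotation and Mecke's first-moment computation is exactly the paper's Lemma~\ref{lem:lisr} (proved the same way in Appendix~\ref{sec:lis}), and the subsequent union bound over the $\mathrm{O}(n)$ lines, Borel--Cantelli, and outer approximation by rational light-rectangles using monotonicity of $L^{\uparrow}$ match the paper's argument step for step. The only differences are cosmetic (a slightly different constant, and your explicit treatment of the degenerate case $\Leb(R)=0$).
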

\begin{proof}
	By Lemma \ref{lem:lisr} in Appendix \ref{sec:lis}, for all $y\in \Z$ and
	all
	$k \in \N$,
	\[
	\Pro\left(L^{\uparrow}(\omega_{y} \cap R) \geq k \right) \leq
	\left(\frac{2e^2\,  \Leb(R)}{k^2}\right)^k.
	\]
	This is a classical inequality when dealing with longest increasing subsequences that can be found for example in \cite[Lemma 4.1]{seppalainen1996microscopic}. Therefore, for $c=2e$,
	\[
	\Pro\left(\sup_{y \in \llbracket -nY,nY \rrbracket}
	\left\{\frac{1}{n}L^{\uparrow}(\omega_{y} \cap nR)\right \} \geq  c
	\sqrt{\Leb(R)} \right) \leq  2 n Y \, 2^{- \lceil n c\sqrt{\Leb(R)}
	\rceil }.
	\]
	By Borel-Cantelli Lemma, for almost all $\omega$, for any $Y>0$,
	\[
	\limsup_{n \to \infty} \sup_{y \in \llbracket -nY,nY
	\rrbracket}\frac{1}{n}L^{\uparrow}(\omega_{y} \cap nR)
	\leq c \, \sqrt{\Leb(R)}.
	\]
	By countability, we can have this almost surely simultaneously for all
	light-rectangle $R$ with rational coordinate vertices. The full proof
	follows
	by density of rational numbers and by the monotonicity with respect to inclusion of
	$R \mapsto
	L^{\uparrow}(\omega_{y} \cap nR)$.
\end{proof}
Let us now give some consequences of Lemmas \ref{lem:varformPNG} and
\ref{lem:lis}.

\begin{prop}[Control on temporal growth]\label{prop:controlsgrowth}
	For all $\omega \in \Omega_0$, for all $f \in \bar{\Gamma}$, all $s
	\leq t$ and $(x,y)\in \R^2$,
	\begin{equation}
	\limsup_{n \to \infty} S_n(s,t,f;\omega)(x,y) \leq \sup_{|z-x|\leq
	t-s}f(z,y)  + \sqrt{2} \, c \ (t-s),
	\end{equation}
	where $c$ is the same constant as in Lemma \ref{lem:lis}.
\end{prop}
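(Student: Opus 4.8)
The plan is to bound $S_n(s,t,f;\omega)(x,y)$ from above by applying the variational formula of Lemma~\ref{lem:varformPNG} to the rescaled dynamics and then controlling the longest-light-path term via Lemma~\ref{lem:lis}. First I would fix $\omega \in \Omega_0$, $f \in \bar{\Gamma}$, $s \leq t$ and $(x,y) \in \R^2$, and write out $S_n(s,t,f;\omega)(x,y) = \frac{1}{n} h(nx, \lfloor ny \rfloor, n(t-s); \varphi_n^f, \tau_{ns}\omega)$ by Definition~\ref{defi:S_n}. Applying Lemma~\ref{lem:varformPNG} with the height function $h(\cdot, \lfloor ny \rfloor, \cdot; \varphi_n^f, \tau_{ns}\omega)$, which follows the PNG dynamic on line $\lfloor ny \rfloor$, gives
\[
h(nx, \lfloor ny \rfloor, n(t-s); \varphi_n^f, \tau_{ns}\omega) = \sup_{z \in [nx - n(t-s), nx + n(t-s)]} \Big\{ \varphi_n^f(z, \lfloor ny \rfloor) + L^{\uparrow}\big( (\tau_{ns}\omega)^{\varphi_n^f}_{\lfloor ny \rfloor} \cap R_{(z,0),(nx, n(t-s))} \big) \Big\}.
\]
Since $(\tau_{ns}\omega)^{\varphi}_{y} \subseteq (\tau_{ns}\omega)_y$ and $L^{\uparrow}$ is monotone under inclusion, I can drop the effective-creation restriction and bound the light-path term by $L^{\uparrow}\big( (\tau_{ns}\omega)_{\lfloor ny \rfloor} \cap R_{(z,0),(nx, n(t-s))} \big)$.

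Next I would pass to the limit in $n$. Dividing by $n$, the first term $\frac{1}{n}\varphi_n^f(z, \lfloor ny \rfloor)$ is, by the approximation bound \eqref{eq:approxphiGW} in Proposition~\ref{prop:defphiGW}, within $2/n$ of $f(z/n, \lfloor ny\rfloor/n)$, hence uniformly close to $f(z/n, y)$ since $f \in \bar\Gamma$ is Lipschitz in $y$; substituting $w = z/n \in [x-(t-s), x+(t-s)]$, the contribution of the first term converges to at most $\sup_{|w-x| \leq t-s} f(w,y)$. For the light-path term, I observe that the light-rectangle $R_{(z,0),(nx,n(t-s))}$ is contained in $n R$ for a fixed light-rectangle $R$ of area $(t-s)^2/2$ — namely $R = R_{(x-(t-s),0),(x,t-s)}$ or, more safely, a slightly enlarged fixed light-rectangle containing all the rescaled rectangles uniformly in $z$; its area is at most $(t-s)^2/2$ by Remark~\ref{rem:lightrectangle}. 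Since $\tau_{ns}\omega$ has the same law as $\omega$ and in fact, by the almost-sure statement of Lemma~\ref{lem:lis} applied on $\Omega_0$ (after the time shift, which I would handle by noting the bound in Lemma~\ref{lem:lis} holds simultaneously for all light-rectangles and is insensitive to the fixed time translation — or by re-running the Borel–Cantelli argument for $\tau_{ns}\omega$), I get $\limsup_n \frac{1}{n} L^{\uparrow}\big( (\tau_{ns}\omega)_{\lfloor ny \rfloor} \cap nR \big) \leq c\sqrt{\Leb(R)} \leq c \sqrt{(t-s)^2/2} = \frac{c}{\sqrt 2}(t-s)$. Wait — the target bound has $\sqrt 2 \, c (t-s)$, so I should be generous with the enclosing rectangle: taking the light-rectangle spanned by $[(x-(t-s),0),(x+(t-s),t-s)]$ has horizontal and vertical extent giving area up to $(t-s)^2$, yielding the factor $c(t-s)$; absorbing the sup over $z$ and a small enlargement to make the inclusion uniform gives at most $\sqrt 2\, c(t-s)$, which is the claimed constant.

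The one subtlety to be careful about — and the main obstacle — is the interchange of the supremum over $z \in [nx-n(t-s), nx+n(t-s)]$ (a set whose size grows with $n$) with the $\limsup_n$ and with the almost-sure bound of Lemma~\ref{lem:lis}. Lemma~\ref{lem:lis} is already stated with a supremum over lines $y \in \llbracket -nY, nY\rrbracket$, but here the issue is the supremum over the left-endpoint $z$; I would resolve it by using the monotonicity from Remark~\ref{rem:lightrectangle} — every $R_{(z,0),(nx,n(t-s))}$ with $|z - nx| \leq n(t-s)$ is contained in the single fixed dilated light-rectangle $n R_{(x-(t-s),0),(x+(t-s),t-s)}$ (shrinking the base point only enlarges the rectangle) — so the sup over $z$ of the light-path term is bounded by $L^{\uparrow}$ on one deterministic rescaled rectangle, to which Lemma~\ref{lem:lis} applies directly. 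The analogous sup in the $f$-term is handled by uniform continuity of $f$ on the compact interval. Combining the two limits and choosing the enclosing rectangle so that $\sqrt{\Leb} \leq \sqrt 2 (t-s)$ completes the proof.
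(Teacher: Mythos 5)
Your proposal follows the paper's proof essentially step by step: apply the variational formula of Lemma~\ref{lem:varformPNG}, drop the effective-creation restriction by monotonicity of $L^{\uparrow}$, replace $\frac1n\varphi_n^f$ by $f$ via \eqref{eq:approxphiGW}, and bound the supremum over $z$ of the light-path terms by $L^{\uparrow}$ on a single fixed rescaled light-rectangle to which Lemma~\ref{lem:lis} applies. Two points need tightening, though neither is a conceptual gap. First, the enclosing rectangle you name, $R_{(x-(t-s),0),(x+(t-s),t-s)}$, does not exist as a light-rectangle: its would-be diagonal has horizontal displacement $2(t-s)$ exceeding its time extent $t-s$. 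The correct choice (the paper's) is $R_{(x,-(t-s)),(x,t-s)}$, i.e.\ the rectangle with both diagonal endpoints at abscissa $x$ and time extent $2(t-s)$; by Remark~\ref{rem:lightrectangle} it contains every $R_{(z,0),(x,t-s)}$ with $|z-x|\leq t-s$ (lowering the base point within the backward light cone only enlarges the rectangle), and its area is exactly $2(t-s)^2$, which yields $c\sqrt{2(t-s)^2}=\sqrt2\,c\,(t-s)$ with no ``small enlargement'' or absorption needed. Second, for the time shift, re-running Borel--Cantelli for $\tau_{ns}\omega$ is awkward since the shift depends on $n$; the clean route is to translate the set rather than the process, namely
\begin{equation*}
L^{\uparrow}\bigl((\tau_{ns}\omega)_{\lfloor ny\rfloor}\cap nR_{(z,0),(x,t-s)}\bigr)=L^{\uparrow}\bigl(\omega_{\lfloor ny\rfloor}\cap nR_{(z,s),(x,t)}\bigr),
\end{equation*}
after which all rectangles sit inside $nR_{(x,2s-t),(x,t)}$ and Lemma~\ref{lem:lis} applies directly to the original $\omega\in\Omega_0$.
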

\begin{proof}
	By definition of $S_n$ and by Lemma \ref{lem:varformPNG},
	\begin{align*}
	&S_n(s,t,f;\omega)(x,y) = \frac{1}{n} h(nx,\lfloor ny
	\rfloor,
	n(t-s);\varphi_n^f,\tau_{ns}\omega)\\
	& = \sup_{|z-x|\leq t-s}\left\{\frac{1}{n}\varphi_n^f(nz,y) +
	\frac{1}{n}L^{\uparrow}\left( (\tau_{ns}\omega)^{\varphi_n^f}_{\lfloor ny
		\rfloor} \cap R_{(nz,0),(nx,n(t-s))}\right) \right\}\\
	& \leq \sup_{|z-x|\leq t-s}\left\{f(z,y)+\frac{2}{n} +
	\frac{1}{n}L^{\uparrow}\left( (\tau_{ns} \omega)_{\lfloor ny \rfloor} \cap
	nR_{(z,0),(x,t-s)}\right) \right\} \qquad \text{by \eqref{eq:approxphiGW} and
	since $\omega^{\varphi_n^f} \subseteq \omega$}\\
	& = \sup_{|z-x|\leq t-s}\left\{f(z,y)+\frac{2}{n} +
	\frac{1}{n}L^{\uparrow}\left(  \omega_{\lfloor ny \rfloor} \cap
	nR_{(z,s),(x,s)}\right) \right\} \qquad \text{by definition of $\tau_{ns}$ in \eqref{eq:translation}}\\
	& \leq \sup_{|z-x|\leq t-s}f(z,y)+\frac{2}{n} +
	\frac{1}{n}L^{\uparrow}\left(\omega_{\lfloor ny
		\rfloor} \cap nR_{(x,2s-t),(x,t)}\right) \qquad \text{ since
		$R_{(z,s),(x,t)} \subseteq R_{(x,2s-t),(x,t)}$}.
	\end{align*}
	We conclude the proof with Lemma \ref{lem:lis} and
	$\Leb(R_{(x,2s-t),(x,t)}) = 2 (t-s)^2$.
\end{proof}

Now, we establish a crucial lemma that guarantees a priori that, at any time,
the asymptotic rescaled height function has at least the worst regularity
between
that of the initial height
profile and $1/2$-Hölder regularity. A posteriori, after the proof
of the main
theorem, we will have that it stays Lipschitz at any time if the initial
condition is itself Lipschitz since this is the case for viscosity solutions of
Hamilton-Jacobi equations.

\begin{prop}[Control on height differences along $x$]\label{prop:difhGW}
	There exists a constant $C$ (that depends on the time horizon $T$) such that for all
	$\omega\in \Omega_0$
	, all
	$f
	\in \bar{\Gamma}$, all $(x,y) \in
	\R^2$ , and all $\delta \in [0,1]$,
	\begin{equation}
	\begin{aligned}
	\limsup_{n \to \infty} \sup_{\substack{x_1,x_2 \in [x-\delta,x+\delta] \\
			0 \leq s \leq t \leq T}} &
	|S_n(s,t,f;\omega)(x_2,y)-S_n(s,t,f;\omega)(x_1,y)| \\
	& \leq
	\sup_{\substack{x_1,x_2\in[x-\delta-T,x+\delta+T] \\ |x_2-x_1| \leq
			2 \delta }} |f(x_1,y)-f(x_2,y)| +
	C \, \sqrt{\delta} .
	\end{aligned}
	\end{equation}
\end{prop}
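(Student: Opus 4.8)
\emph{Approach.} The plan is to use the directed-polymer representation of the PNG dynamics (Lemma~\ref{lem:varformPNG}), applied line by line, together with the longest-increasing-subsequence bound of Lemma~\ref{lem:lis}. Fix $\omega\in\Omega_0$, $f\in\bar{\Gamma}$, $(x,y)$ and $\delta\in[0,1]$, let $x_1,x_2\in[x-\delta,x+\delta]$ and $0\le s\le t\le T$, and write $\tau:=t-s$ and $\Delta:=|x_1-x_2|\le 2\delta$. By symmetry it is enough to bound $S_n(s,t,f;\omega)(x_2,y)-S_n(s,t,f;\omega)(x_1,y)$ from above. By Lemma~\ref{lem:varformPNG} there is a starting point $z_2$ with $|z_2-x_2|\le\tau$ attaining the supremum in the variational formula for $\tfrac1n h(nx_2,\lfloor ny\rfloor,n\tau;\varphi_n^f,\tau_{ns}\omega)$. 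The point of the argument is that the clipped competitor $z_1':=\min(\max(z_2,x_1-\tau),x_1+\tau)$ is admissible for the variational formula at $x_1$, satisfies $|z_1'-z_2|\le|x_1-x_2|=\Delta$, and the light-rectangle it determines differs from the one at $x_2$ only by a thin sliver. Plugging in $z_1'$ as a suboptimal competitor and subtracting bounds the difference by the sum of (i) $\tfrac1n\big(\varphi_n^f(nz_2,\lfloor ny\rfloor)-\varphi_n^f(nz_1',\lfloor ny\rfloor)\big)$ and (ii) $\tfrac1n$ times the difference of $L^{\uparrow}$ of the effective creations (on line $\lfloor ny\rfloor$) in the two light-rectangles $R_1$ and $R_2$ associated to $(z_2,x_2)$ and to $(z_1',x_1)$.

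\emph{Term (i).} Here $z_1',z_2\in[x-\delta-T,\,x+\delta+T]$ and $|z_1'-z_2|\le 2\delta$, so by the approximation bound \eqref{eq:approxphiGW} of Proposition~\ref{prop:defphiGW} term (i) is at most
\[
\sup\big\{|f(a,y)-f(b,y)|\;:\;a,b\in[x-\delta-T,x+\delta+T],\ |a-b|\le 2\delta\big\}+\tfrac{4}{n},
\]
which is precisely the first term on the right-hand side of the statement, up to a $\mathrm{O}(1/n)$ that is uniform in $x_1,x_2,s,t$.

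\emph{Term (ii).} I would use the elementary inequality $L^{\uparrow}(A)\le L^{\uparrow}(B)+L^{\uparrow}(A\setminus B)$ for finite planar point sets --- a light-path collecting $L^{\uparrow}(A)$ points of $A$ decomposes into a sub-chain inside $A\cap B$ (hence of length $\le L^{\uparrow}(B)$, by monotonicity under inclusion) and a sub-chain inside $A\setminus B$ --- so that term (ii) is at most $\tfrac1n L^{\uparrow}$ of \emph{all} creations on line $\lfloor ny\rfloor$ inside $R_1\setminus R_2$. Now $R_1$ and $R_2$ are light-rectangles whose corresponding vertices are at distance $\mathrm{O}(\Delta)=\mathrm{O}(\delta)$ (the two top vertices are at distance $\Delta$, the bottom ones at distance $\le\Delta$) and whose sides have length $\mathrm{O}(\tau)=\mathrm{O}(T)$; hence $R_1\setminus R_2$ lies in an $\mathrm{O}(\delta)$-neighbourhood of $\partial R_1$, and since thickening a side of $R_1$ (a segment parallel to $t=x$ or $t=-x$) perpendicularly again produces a light-rectangle, $R_1\setminus R_2$ is covered by $\mathrm{O}(1)$ light-rectangles of area $\mathrm{O}(T\delta)$, all contained in a bounded region that does not depend on $n$ or on $x_1,x_2,s,t$. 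Invoking Lemma~\ref{lem:lis} for each of these pieces (the $n$-dependence of $R_1,R_2$ being absorbed by the usual monotonicity and rational-approximation argument, which reduces matters to finitely many fixed light-rectangles) gives $\limsup_n$ of term (ii) $\le C\sqrt{\delta}$ with $C=C(T)$ of order $c\sqrt{T}$. Since none of the bounds above depends on $x_1,x_2,s,t$ except through $\Delta\le2\delta$ and $\tau\le T$, one may take the supremum over $x_1,x_2,s,t$ before taking $\limsup_n$, which yields the proposition (the reverse inequality by exchanging $x_1$ and $x_2$).

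\emph{Main obstacle.} The delicate point is term (ii). One would like to compare the two heights directly via the Markov property (Lemma~\ref{lem:markovGW}) by splitting the optimal polymer at an intermediate time, but this fails: the forward light cone of an optimal intermediate point for $x_2$ need not reach $x_1$ at the final time, so there is no clean nesting of the two variational problems. The workable route is the direct comparison above, and the real content is the geometric estimate that the symmetric difference of the two light-rectangles is covered by $\mathrm{O}(1)$ light-rectangles of area $\mathrm{O}(T\delta)$, together with enough uniformity --- in $n$ and in $(x_1,x_2,s,t)$ --- to feed this into Lemma~\ref{lem:lis}. The $\sqrt{\delta}$ rate is then forced: it is the square root of this $\mathrm{O}(\delta)$ area, exactly as PNG increments scale like the square root of the Poissonian area they see.
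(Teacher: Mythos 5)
Your proposal is correct and follows essentially the same route as the paper: a clipped competitor in the variational formula of Lemma~\ref{lem:varformPNG}, the subadditivity bound $L^{\uparrow}(A)-L^{\uparrow}(B)\le L^{\uparrow}(A\setminus B)$, containment of the resulting sliver in light-rectangles of area $\mathrm{O}(T\delta)$, and Lemma~\ref{lem:lis}; the paper's Lemma~\ref{lem:controlrect} merely splits your clipping into its two explicit cases and names the covering rectangles. The uniformity step you delegate to ``the usual argument'' (replacing the continuum of $(x_1,x_2,s,t)$-dependent slivers by finitely many fixed light-rectangles of comparable area before invoking Lemma~\ref{lem:lis}) is indeed the only delicate point; the paper handles it by exhibiting two fixed enclosing rectangles $R_{(x\pm T,-\delta),(x,T+\delta)}$.
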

\begin{proof}
	We start by showing the following Lemma.
	\begin{lem}\label{lem:controlrect}
		For all $y \in \Z$, all $x_1<x_2 \in \R$, all $t \geq 0$, all
		$\varphi \in \Gamma$,
		\begin{equation}
		\begin{aligned}
		|h(x_2,y,t;\varphi,\omega)&-h(x_1,y,t;\varphi,\omega)|
		\leq
		\sup_{\substack{z,z'\in[x_1-t,x_2+t] \\ |z-z'|\leq |x_2-x_1|}}
		|\varphi(z,y)-\varphi(z',y)|+ \max \left(L^{\uparrow}\left(\omega_y
		\cap R_1 \right),L^{\uparrow}\left(\omega_y \cap R_2 \right) \right),
		\end{aligned}
		\end{equation}
		with $R_1 := R_{\big(\frac{x_1+x_2}{2}-t,-\frac{x_2-x_1}{2}\big),\big(x_1,t\big)}$ and $R_2:=		R_{\big(\frac{x_1+x_2}{2}+t,-\frac{x_2-x_1}{2}\big),\big(x_2,t\big)}$.
	\end{lem}
        \begin{proof}
	We start by showing that
	\begin{equation}\label{eq:controlrectangle}
	h(x_2,y,t;\varphi,\omega)-h(x_1,y,t;\varphi,\omega) \leq
	\sup_{z\in[x_1+t,x_2+t]}
	|\varphi(z,y)-\varphi(x_1+t,y)| + L^{\uparrow}\left(\omega_y \cap
			R_2\right).
	\end{equation}
	By Lemma \ref{lem:varformPNG}, there exists $z \in [x_2-t,x_2+t]$ such that
	\[h(x_2,y,t;\varphi,\omega) = \varphi(z,y) +
	L^{\uparrow}(\omega^{\varphi}_y \cap R_{(z,0),(x_2,t)}) \ .
	\]
	Two cases can occur:
	\begin{enumerate}
		\item [(i)]	 If $z \in
		[x_1-t,x_1+t]$, then, by Lemma \ref{lem:varformPNG},
		$h(x_1,y,t;\varphi,\omega)
		\geq \varphi(z,y) +
		L^{\uparrow}(\omega^{\varphi}_y \cap R_{(z,0),(x_1,t)})$ hence
		\[
		h(x_2,y,t;\varphi,\omega)-h(x_1,y,t;\varphi,\omega) \leq
		L^{\uparrow}(\omega^{\varphi}_y \cap R_{(z,0),(x_2,t)}) -
		L^{\uparrow}(\omega^{\varphi}_y \cap R_{(z,0),(x_1,t)}).
		\]

		Now, for any $A,B
		\subseteq \R^2$ finite sets, it is not hard to show
		$
		L^{\uparrow}(A\cup B) \leq L^{\uparrow}(A) + L^{\uparrow}(B),
		$
		and thus for any $A,B
				\subseteq \R^2$ finite sets,
		\[
		L^{\uparrow}(A) - L^{\uparrow}(B) \leq L^{\uparrow}(A \setminus B).
		\]
		We apply this inequality with $A=\omega^{\varphi}_y \cap R_{(z,0),(x_2,t)}$ and $B=\omega^{\varphi}_y \cap R_{(z,0),(x_1,t)}$ (the creations inside the blue and red rectangles on Figure \ref{fig:control}). The set $A\setminus B$ is equal to the creations inside the green rectangle which is included in the light-rectangle $R_2$ (surrounded by dash lines on Figure \ref{fig:control}).

		\begin{figure}[htbp]
			\begin{center}
				\includegraphics[scale=0.8]{./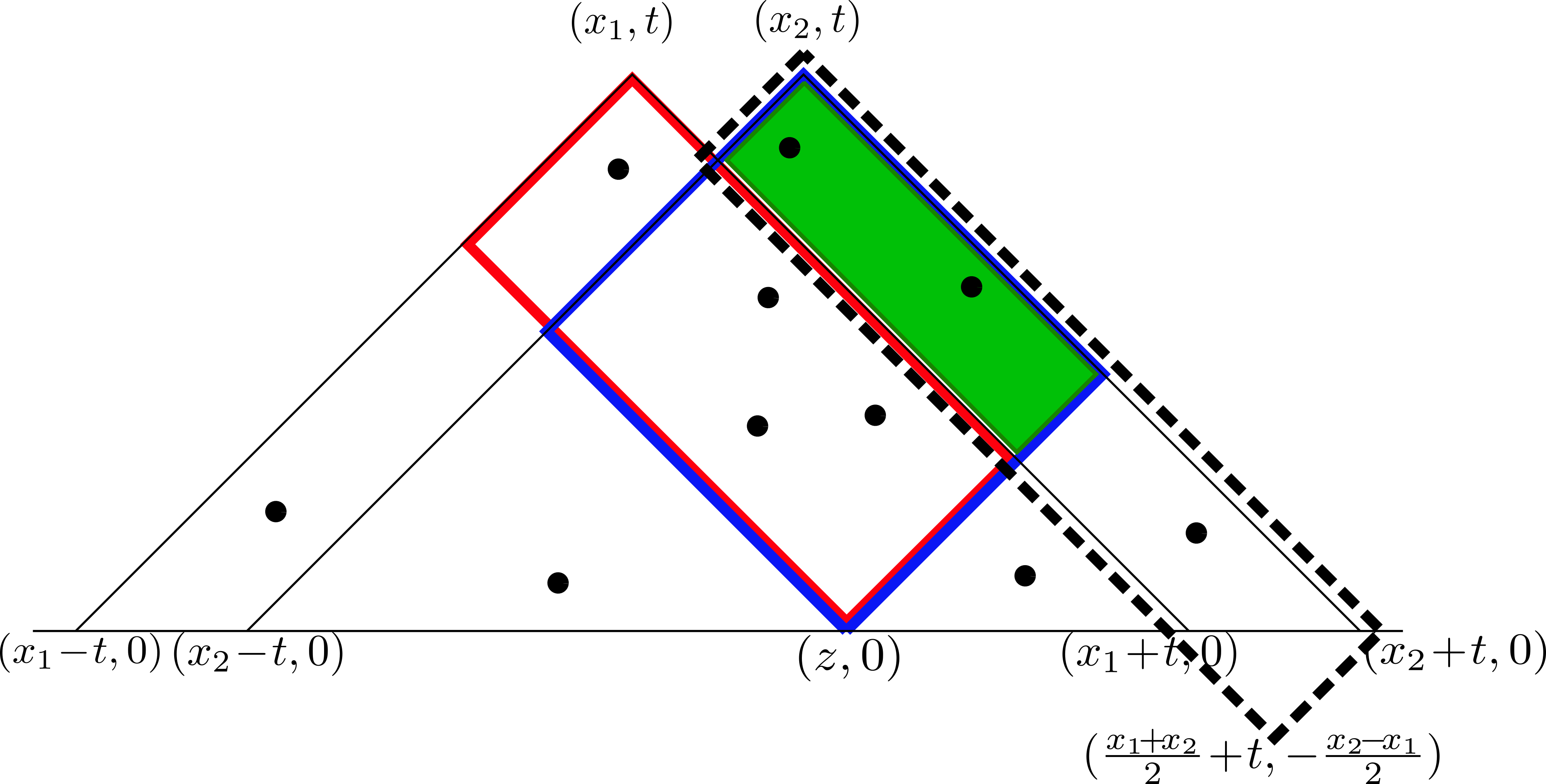}
			\end{center}
			\vspace{-0.5cm}
			\caption{\footnotesize Illustration of the proof of Lemma \ref{lem:controlrect}. The effective creations are depicted by black circles. The blue rectangle $R_{(z,0),(x_2,t)}$ and the red rectangle $R_{(z,0),(x_1,t)}$ are involved in the variational formula \eqref{eq:varform} applied to $h(x_2,y,t)$ and $h(x_1,y,t)$. The green rectangle corresponds to the set difference of the blue and red rectangles and is included in the larger dashed-line rectangle.}
			\label{fig:control}
		\end{figure}

		Altogether, we obtain $h(x_2,y,t;\varphi,\omega)-h(x_1,y,t;\varphi,\omega) \leq
		L^{\uparrow}\left(\omega^{\varphi}_y \cap R_2\right)$,
		which implies \eqref{eq:controlrectangle}.

		\item [(ii)]
		If $z \in
		[x_1+t,x_2+t]$, then by choosing $z' = x_1+t$ in the variational formula
		\eqref{eq:varform},
		we get that $h(x_1,y,t;\varphi,\omega) \geq \varphi(x_1+t,y)$ hence
		\begin{align*}
		& \hspace{-2cm} h(x_2,y,t;\varphi,\omega)-h(x_1,y,t;\varphi,\omega)\\
		& \leq
		\varphi(z,y) + L^{\uparrow}(\omega^{\varphi}_y \cap R_{(z,0),(x_2,t)})
		-
		\varphi(x_1+t,y)\\
		& \leq \sup_{z\in[x_1+t,x_2+t]}
		|\varphi(z,y)-\varphi(x_1+t,y)| +
		L^{\uparrow}\left(\omega_y^{\varphi} \cap R_2\right) \ ,
		\end{align*}
		since $$R_{(z,0),(x_2,t)} \subseteq
		R_{\big(\frac{x_1+x_2}{2}+t,-\frac{x_2-x_1}{2}\big),\big(x_2,t\big)} = R_2 \quad \text{for all $z \in [x_1+t,x_2+t]$},$$
		as shown in Figure \ref{fig:control}.	This also implies \eqref{eq:controlrectangle}.

	\end{enumerate}
	The proof of Lemma \ref{lem:controlrect} is concluded by showing similarly that
	$$h(x_1,y,t;\varphi,\omega)-h(x_2,y,t;\varphi,\omega) \leq
	\sup_{z\in[x_1-t,x_2-t]}
	|\varphi(z,y)-\varphi(x_2-t,y)| + L^{\uparrow}\left(\omega_y \cap
			R_1 \right).$$
\end{proof}
Lemma \ref{lem:controlrect} yields that for all $x-\delta \leq x_1 \leq x_2 \leq
x+
\delta$ and all $t \geq 0$,
\[
\begin{aligned}
& |h(x_2,y,t;\varphi,\omega)-h(x_1,y,t;\varphi,\omega)|\\
& \leq
\sup_{\substack{z,z'\in[x-\delta-t,x+\delta+t] \\ |z-z'|\leq 2\delta}}
|\varphi(z,y)-\varphi(z',y)|
+ \max \left(L^{\uparrow}(\omega_y \cap
R_{(x-t,-\delta),(x,t+\delta)}),L^{\uparrow}(\omega_y \cap
R_{(x+t,-\delta),(x,t+\delta)}) \right)
\end{aligned}
\]
since one can check that  $$R_{\big(\frac{x_1+x_2}{2}-t,-\frac{x_2-x_1}{2}\big),\big( x_1,t \big)} \subseteq R_{(x-
	t,-\delta),(x,t+\delta)}$$ and
$$R_{\big(\frac{x_1+x_2}{2}+t,-\frac{x_2-x_1}{2}\big),\big( x_2,t \big)} \subseteq R_{(x+
	t,-\delta),(x,t+\delta)}.$$ Therefore, for all
$x_1,x_2 \in [x-\delta,x+\delta]$, all
$s,t$ such that $0\leq s \leq t \leq T$, all $f \in \bar{\Gamma}$ and
all $n \in \N$,
\begin{align*}
|S_n(s,t,f;\omega)(x_2,y)& -S_n(s,t,f;\omega)(x_1,y)| \\
 = & \left|\frac{1}{n}h(nx_2,\lfloor ny
\rfloor,n(t-s),\varphi_n^f,\tau_{ns}\omega) - \frac{1}{n}h(nx_1,\lfloor ny
\rfloor,n(t-s),\varphi_n^f,\tau_{ns}\omega) \right| \\
 \leq & \sup_{\substack{z,z'\in[x-\delta-(t-s),x+\delta+(t-s)] \\ |z-z'|\leq 2\delta}}
\left| \frac{1}{n} \varphi_n^f(nz,\lfloor ny \rfloor)-\frac{1}{n}
\varphi_n^f(nz',\lfloor ny \rfloor)\right|\\
& + \max \left( \frac{1}{n}L^{\uparrow}(\omega_{\lfloor ny \rfloor} \cap
nR_{(x-(t-s),s-\delta),(x,t+\delta)}),\frac{1}{n}L^{\uparrow}(\omega_{\lfloor
ny \rfloor} \cap
nR_{(x+(t-s),s-\delta),(x,t+\delta)}) \right)\\
 \leq & \sup_{\substack{z,z'\in[x-\delta-T,x+\delta+T] \\ |z-z'|\leq 2\delta}}
\left| f(z,y)-f(z',y)\right| + 4/n\\
& + \max \left( \frac{1}{n}L^{\uparrow}(\omega_{\lfloor ny \rfloor} \cap
nR_{(x-T,-\delta),(x,T+\delta)}),\frac{1}{n}L^{\uparrow}(\omega_{\lfloor
ny \rfloor} \cap
nR_{(x+T,-\delta),(x,T+\delta)}) \right),
\end{align*}
where the last inequality holds because of \eqref{eq:approxphiGW} and $R_{(x\pm(t-s),s-\delta),(x,t+\delta)} \subseteq R_{(x\pm
	T,-\delta),(x,T+\delta)}$. Note that this upper bound is uniform in  $x_1,x_2 \in [x-\delta,x+\delta]$ and in
	$s,t$ such that $0 \leq s \leq t \leq T$. We conclude the proof of Proposition \ref{prop:difhGW} by applying Lemma
\ref{lem:lis} to $R_{(x\pm
	T,-\delta),(x,T+\delta)}$ which are of area  $2(\delta T + \delta^2) \leq 2
	(T+1) \, \delta$ since $\delta \in [0,1]$ and by choosing $C := c \,
\sqrt{2(T+1)}$ ($c$ is the same constant as in Lemma \ref{lem:lis}).
\end{proof}

\subsection{Choice of the metric}

We endow $\F(\R^2)$ and $\bar{\Gamma} \subseteq \mathcal{C}(\R^2)
\subseteq \F(\R^2)$ with the distance of uniform convergence on all compacts,
e.g.
\begin{equation}
\forall f_1,f_2 \in \F(\R^2), \; d_{\infty,c}(f_1,f_2) :=
\sum_{i=1}^{\infty}2^{-i} \left(\|f_1-f_2\|_{\infty}^{[-i,i]^2} \wedge 1\right).
\end{equation}
For this distance, a sequence of functions $(f_n)_{n \in \N}$ converges to $f
\in \F(\R^2)$ if and only if it converges uniformly on all compact sets of
$\R^2$ to $f$.

\begin{prop}\label{prop:metrique}
	The metric space $(\F(\R^2),d_{\infty,c})$ is complete. The metric space
	$(\bar{\Gamma},d_{\infty,c})$ is complete
	and separable (i.e. a Polish space).
\end{prop}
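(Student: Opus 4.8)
The plan is to establish the three assertions — completeness of $\F(\R^2)$, completeness of $\bar{\Gamma}$, and separability of $\bar{\Gamma}$ — in that order, each by a short soft-analysis argument; no input from the Gates--Westcott model is needed. Throughout I will use the elementary fact that, because of the truncation by $1$ and the geometric weights, a sequence $(f_n)$ is $d_{\infty,c}$-Cauchy (resp.\ $d_{\infty,c}$-convergent to $f$) if and only if it is uniformly Cauchy (resp.\ uniformly convergent to $f$) on every square $[-i,i]^2$: if $d_{\infty,c}(f_n,f_m)\to0$ then $\|f_n-f_m\|_{\infty}^{[-i,i]^2}\wedge1\to0$ for each fixed $i$, hence $\|f_n-f_m\|_{\infty}^{[-i,i]^2}\to0$ since it is eventually $<1$; the converse follows by splitting the series at a large index.

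\textbf{Completeness of $(\F(\R^2),d_{\infty,c})$.} Let $(f_n)$ be $d_{\infty,c}$-Cauchy. By the observation above, for every $x\in\R^2$ the sequence $(f_n(x))_n$ is Cauchy in $\R$, so $f_n$ converges pointwise to some function $f:\R^2\to\R$; letting $m\to\infty$ in the uniform Cauchy estimate on $[-i,i]^2$ gives $\|f_n-f\|_{\infty}^{[-i,i]^2}\to0$ for each $i$, i.e.\ $d_{\infty,c}(f_n,f)\to0$. Since $\F(\R^2)$ is the set of \emph{all} functions $\R^2\to\R$, the limit $f$ automatically belongs to it, so the space is complete.

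\textbf{Completeness of $(\bar{\Gamma},d_{\infty,c})$.} It suffices to show $\bar{\Gamma}$ is closed in $(\F(\R^2),d_{\infty,c})$, since a closed subset of a complete metric space is complete. Let $f_n\in\bar{\Gamma}$ with $d_{\infty,c}(f_n,f)\to0$. Then $f_n\to f$ uniformly on each $[-i,i]^2$; a uniform limit of continuous functions on a compact set is continuous, and every point of $\R^2$ lies in the interior of some $[-i,i]^2$, so $f\in\mathcal{C}(\R^2)$. Moreover, for fixed $x\in\R$ and $y_1\le y_2$, the quantity $f_n(x,y_2)-f_n(x,y_1)$ lies in the closed interval $[-(y_2-y_1),0]$ and converges (pointwise convergence) to $f(x,y_2)-f(x,y_1)$, which therefore lies there as well. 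Hence $f\in\bar{\Gamma}$, and $\bar{\Gamma}$ is closed.

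\textbf{Separability of $(\bar{\Gamma},d_{\infty,c})$.} One cannot argue inside $\F(\R^2)$, which is \emph{not} separable (the indicators $\1_{\{a\}}$, $a\in[-1,1]^2$, are pairwise at $d_{\infty,c}$-distance $1$); the plan is to route through $\mathcal{C}(\R^2)$. For each $i$ the Banach space $(\mathcal{C}([-i,i]^2),\|\cdot\|_{\infty})$ is separable (e.g.\ bivariate polynomials with rational coefficients are dense, by Stone--Weierstrass); the restriction map $f\mapsto(f|_{[-i,i]^2})_i$ is an isometry from $(\mathcal{C}(\R^2),d_{\infty,c})$ onto a subspace of the countable product $\prod_i\mathcal{C}([-i,i]^2)$ endowed with a metric of the same weighted form, and a countable product of separable metric spaces — together with all its subspaces — is separable, so $(\mathcal{C}(\R^2),d_{\infty,c})$ is separable. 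Since $\bar{\Gamma}\subseteq\mathcal{C}(\R^2)$ and separability passes to subspaces of metric spaces, $(\bar{\Gamma},d_{\infty,c})$ is separable, hence Polish by the previous step. The three steps are elementary; the only one needing a little care is this last one, precisely because $\F(\R^2)$ itself fails to be separable, which forces the detour through $\mathcal{C}(\R^2)$ rather than a direct subspace argument. (One could alternatively produce an explicit countable dense family in $\bar{\Gamma}$ from grid-piecewise-linear functions with rational data, but the defining $y$-monotonicity and $1$-Lipschitz constraints would then have to be checked by hand, which the abstract argument avoids.)
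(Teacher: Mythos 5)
Your proof is correct and follows essentially the same route as the paper: completeness of $\F(\R^2)$ as a standard fact, $\bar{\Gamma}$ closed in $\mathcal{C}(\R^2)$ (itself closed in $\F(\R^2)$ and separable via polynomials with rational coefficients), hence complete and separable. Your extra remarks — the explicit verification that the slope constraint is closed under pointwise limits, and the observation that $\F(\R^2)$ itself is not separable, which is why one must route through $\mathcal{C}(\R^2)$ — are accurate refinements of the same argument.
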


\begin{proof}
  The completeness of $(\F(\R^2),d_{\infty,c})$ is a classical
  fact. Since $\bar{\Gamma}$ is a closed subset of $\mathcal{C}(\R^2)$
  (which is complete because closed in $\F(\R^2)$ and separable by
  approximation by polynomials with rational coefficients on any
  compact set) it is in turn a complete separable metric space.
\end{proof}

Now, we denote $\F_{\T} := \F(\T,\F(\R^2))$ the set of functions from
$\T$ (defined in Definition \ref{defi:S_n}) into $\F(\R^2)$ which we endow with the uniform distance:
\begin{equation}
\forall F_1,F_2 \in \F_{\T}, \; D_{\infty}(F_1,F_2) := \sup_{0 \leq s \leq t
	\leq T}d_{\infty,c}(F_1(s,t),F_2(s,t)).
\end{equation}

The following Proposition is standard when dealing with functional spaces with a complete set of destination such as $\F(\R^2)$ (by Proposition \ref{prop:metrique}) and endowed with the uniform distance.

\begin{prop}\label{prop:metriquebis}
	\noindent The metric space $(\F_\T,D_{\infty})$ is complete.
\end{prop}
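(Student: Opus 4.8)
The plan is to run the textbook argument showing that a space of functions valued in a complete metric space, equipped with the uniform metric, is itself complete. The only point that deserves an explicit remark is that $D_\infty$ actually takes finite values on all of $\F_\T$: since each summand in the definition of $d_{\infty,c}$ is truncated at $1$ and the weights $2^{-i}$ sum to $1$, we have $d_{\infty,c} \le 1$, hence $D_\infty(F_1,F_2) \le 1$ for every $F_1,F_2 \in \F_\T$, so no boundedness hypothesis on the functions is needed.

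So let $(F_n)_{n\in\N}$ be a Cauchy sequence in $(\F_\T, D_\infty)$. First I would fix a pair $(s,t) \in \T$ and observe that, since $d_{\infty,c}(F_n(s,t), F_m(s,t)) \le D_\infty(F_n, F_m)$, the sequence $(F_n(s,t))_{n\in\N}$ is Cauchy in $(\F(\R^2), d_{\infty,c})$. By the completeness of $(\F(\R^2), d_{\infty,c})$ established in Proposition \ref{prop:metrique}, this sequence converges to a limit, which I denote $F(s,t) \in \F(\R^2)$. Doing this for every $(s,t) \in \T$ defines a function $F \colon \T \to \F(\R^2)$, i.e.\ an element $F \in \F_\T$; this is the natural candidate for the limit of $(F_n)$.

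It then remains to verify that $D_\infty(F_n, F) \to 0$. Given $\eps > 0$, pick $N$ so that $D_\infty(F_n, F_m) \le \eps$ for all $n, m \ge N$. Then for each fixed $(s,t) \in \T$ and each $n \ge N$, letting $m \to \infty$ in the inequality $d_{\infty,c}(F_n(s,t), F_m(s,t)) \le \eps$ and using continuity of $d_{\infty,c}$ in its second argument, we obtain $d_{\infty,c}(F_n(s,t), F(s,t)) \le \eps$. Taking the supremum over $(s,t) \in \T$ gives $D_\infty(F_n, F) \le \eps$ for all $n \ge N$, so $F_n \to F$ in $(\F_\T, D_\infty)$.

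I do not expect any genuine obstacle here: the argument is purely formal, uses only the completeness of the target space $\F(\R^2)$ from Proposition \ref{prop:metrique}, and the sole thing to keep track of is the boundedness of $d_{\infty,c}$, which guarantees that $D_\infty$ is a well-defined finite metric on all of $\F_\T$ and that the ``$m\to\infty$'' passage above causes no trouble.
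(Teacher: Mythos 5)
Your argument is the standard completeness proof for a space of functions with complete target under the uniform metric, which is exactly what the paper invokes (it states the proposition is "standard" and gives no further detail). The proposal is correct and matches the intended approach, including the useful observation that $d_{\infty,c}\le 1$ makes $D_\infty$ finite everywhere.
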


\subsection{Compactness for any fixed $\omega$ in a subset $\Omega_0$ of
probability $1$}

We recall that $\Omega_0$ is a subset of $\Omega$ of probability $1$
introduced in Corollary \ref{coro:asymlocal} and Lemma
  \ref{lem:lis}. The goal of this section is to show the following
proposition:
\begin{prop}\label{prop:compacity}
  For all $\omega \in \Omega_0$, and all sub-sequences
  $(n_k)_{k \in \N}$, we can extract a sub-sub-sequence
  $(n_{k_l})_{l \in \N}$ such that for all functions
  $f \in \bar{\Gamma}$, the sequence
  $(S_{n_{k_l}}(.,.;f,\omega))_{l\in \N}$ converges to a certain
  $S(\cdot,\cdot;f,\omega)$ in $\F_{\T}$, i.e,
	\[
	\forall R>0, \qquad \sup_{\substack{0 \leq s \leq t \leq T \\ |x|,|y| \leq
	R }}
	\left|
	S_{n_{k_l}}(s,t,f;\omega)(x,y)-S(s,t,f;\omega)(x,y) \right| \underset{l \to
		\infty}{\longrightarrow} 0.
	\]
	 Moreover, for all $(s,t) \in \T$,
	$f \mapsto  S(s,t;f,\omega)$ is continuous from $\bar{\Gamma}$ into itself
	and for all $f \in \bar{\Gamma}$,	$(s,t) \mapsto S(s,t;f,\omega)$ is
	continuous.
\end{prop}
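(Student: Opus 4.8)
The plan is to apply a diagonal extraction argument combined with an Arzelà–Ascoli type theorem for the family of functions $(S_n(\cdot,\cdot;f,\omega))$. Fix $\omega \in \Omega_0$. The first observation is that for a fixed $f \in \bar{\Gamma}$, the sequence $(S_n(\cdot,\cdot;f,\omega))_{n \in \N}$, viewed as functions from $\T$ into $\F(\R^2)$, is precompact in $(\F_\T, D_\infty)$ provided one has: (a) a uniform-in-$n$ bound on $|S_n(s,t,f;\omega)(x,y)|$ on compact sets of $(x,y)$ and $(s,t)$, and (b) an equicontinuity estimate controlling $|S_n(s',t',f;\omega)(x',y') - S_n(s,t,f;\omega)(x,y)|$ in terms of the moduli of $(s,t,x,y) \mapsto (s',t',x',y')$. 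Estimate (a) follows from Proposition \ref{prop:controlsgrowth} (upper bound) together with monotonicity (Lemma \ref{lem:monotoneGW}) and a symmetric lower bound obtained by bounding $L^\uparrow \geq 0$, giving $S_n(s,t,f;\omega)(x,y) \geq \inf_{|z-x|\leq t-s} f(z,y) - 2/n$. For (b), the control along $x$ is exactly Proposition \ref{prop:difhGW}; the control along $y$ comes from the constraint $h(x,y+1) - h(x,y) \in \{-1,0\}$ built into $\Gamma$ (so $S_n$ is automatically Lipschitz in $y$ with the right signs, landing in $\bar{\Gamma}$); and the control in the time variables $(s,t)$ follows again from Proposition \ref{prop:controlsgrowth} applied on short time intervals together with the semi-group/Markov property (Lemma \ref{lem:markovGW}) — moving $t$ a little changes the height by at most $O(\sqrt{2}c\,|t'-t|)$ plus a modulus-of-continuity term for $f$, and moving $s$ is handled similarly using time-translation of $\omega$ and the Markov property.

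The second step is to upgrade this per-$f$ precompactness to a statement that holds simultaneously for all $f \in \bar{\Gamma}$. Here one uses separability of $\bar{\Gamma}$ (Proposition \ref{prop:metrique}): pick a countable dense set $\{f_j\}_{j \in \N} \subseteq \bar{\Gamma}$. Given a subsequence $(n_k)$, apply the precompactness and a diagonal argument to extract $(n_{k_l})$ along which $S_{n_{k_l}}(\cdot,\cdot;f_j,\omega)$ converges in $\F_\T$ for every $j$. Then one transfers convergence to arbitrary $f$ by the asymptotic locality estimate of Corollary \ref{coro:asymlocal}: for $n$ large (depending on $\omega$, the compact set, and $f, f_j$), $\|S_n(s,t,f;\omega) - S_n(s,t,f_j;\omega)\|_\infty^{\mathcal{B}(x,R)} \leq \|\varphi_n^f(n\cdot) - \varphi_n^{f_j}(n\cdot)\|_\infty^{\mathcal{B}(x,R+\alpha T)}/n$, and the right-hand side is at most $\|f - f_j\|_\infty^{\mathcal{B}(x,R+\alpha T+1)} + 4/n$ by the approximation bound \eqref{eq:approxphiGW}. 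Since this can be made arbitrarily small by choosing $f_j$ close to $f$, a standard $3\varepsilon$-argument shows $(S_{n_{k_l}}(\cdot,\cdot;f,\omega))_l$ is Cauchy in $\F_\T$, hence convergent by completeness (Proposition \ref{prop:metriquebis}), to a limit $S(\cdot,\cdot;f,\omega)$.

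The third step is to establish the regularity of the limit $S$. That $S(s,t;f,\omega) \in \bar{\Gamma}$ follows because $\bar{\Gamma}$ is closed in $\F(\R^2)$ and each $S_n(s,t;f,\omega)$, after composition with the limiting modulus, satisfies the defining inequalities (the $y$-monotonicity passes to the limit, and the function is continuous in $(x,y)$ by the equicontinuity estimates of Proposition \ref{prop:difhGW} surviving the limit). Continuity of $(s,t) \mapsto S(s,t;f,\omega)$ in $\F_\T$-topology is inherited from the uniform time-modulus estimates in step (b), which hold uniformly in $n$ hence pass to the limit. Finally, continuity of $f \mapsto S(s,t;f,\omega)$ from $\bar{\Gamma}$ into itself follows from the same asymptotic-locality comparison: passing $n \to \infty$ in $\|S_n(s,t,f;\omega) - S_n(s,t,g;\omega)\|_\infty^{\mathcal{B}(x,R)} \leq \|f-g\|_\infty^{\mathcal{B}(x,R+\alpha T+1)} + 4/n$ gives the Lipschitz-type bound $\|S(s,t,f;\omega) - S(s,t,g;\omega)\|_\infty^{\mathcal{B}(x,R)} \leq \|f-g\|_\infty^{\mathcal{B}(x,R+\alpha T+1)}$, which is precisely continuity (indeed, the locality property 3 of Proposition \ref{prop:condaxiom}).

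The main obstacle I expect is step (b), specifically assembling a clean equicontinuity statement in all four variables $(s,t,x,y)$ simultaneously and uniformly in $n$ — the $x$-control requires the somewhat delicate polymer/LIS comparison of Proposition \ref{prop:difhGW} (which only gives $\tfrac12$-Hölder regularity, not Lipschitz, and with a constant depending on $T$), and one must carefully patch together the time-shift in $s$ (which acts on $\omega$ via $\tau_{ns}$, so the LIS bounds of Lemma \ref{lem:lis} must be invoked on translated Poisson points — fine since the law is translation-invariant and Lemma \ref{lem:lis} was arranged to hold on $\Omega_0$). Verifying that the resulting modulus genuinely controls $D_\infty$-distance on $\F_\T$, i.e. that the extracted limit is continuous jointly in $(s,t)$ as a map into $\F(\R^2)$ and not merely separately, is where the bookkeeping is heaviest; the Arzelà–Ascoli statement quoted as Proposition \ref{prop:ascoli} (which I would invoke here) is designed precisely to package this.
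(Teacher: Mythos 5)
Your proposal is correct and follows essentially the same route as the paper: the paper's proof is a triple nested application of Proposition \ref{prop:ascoli} (over $f\in\bar{\Gamma}$, then over $(s,t)\in\T$, then over $(x,y)\in\R^2$), using exactly the ingredients you list --- Proposition \ref{prop:controlsgrowth} and monotonicity in time for pointwise bounds, Proposition \ref{prop:difhGW} and the slope constraint in $y$ for spatial equicontinuity, Lemma \ref{lem:lis} plus the Markov property and Corollary \ref{coro:asymlocal} for the $(s,t)$-modulus, and Corollary \ref{coro:asymlocal} again for the dependence on $f$. The only difference is organizational: you unfold the outermost Ascoli application into an explicit diagonal extraction over a dense subset of $\bar{\Gamma}$ followed by a $3\varepsilon$-argument, which is precisely what Proposition \ref{prop:ascoli} does internally.
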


\begin{proof}
	In all this proof, we fix $\omega \in \Omega_0$. Let us apply Proposition
	\ref{prop:ascoli} to the sequence of functions $\bar{\Gamma} \ni f
	\longmapsto S_n(\cdot,\cdot,f;\omega) \in \F_{\T}$. From Propositions
	\ref{prop:metrique} and \ref{prop:metriquebis},
	$\bar{\Gamma}$ is separable and $\F_{\T}$ is complete. Therefore, the proof
	of Proposition \ref{prop:compacity} follows easily from Proposition
	\ref{prop:ascoli} together with the next two lemmas giving asymptotic
	equi-continuity and pointwise relative compactness.

	\begin{lem}[Asymptotic equi-continuity of $(f \mapsto
		S_n(\cdot,\cdot,f;\omega))_{n \in \N}$]\label{lem:asymplipschitz}
		For all $\omega\in \Omega_0$ and all $\eps>0$, there exists $N \in
		\N$ such that
		\begin{equation}
		\forall n \geq N \quad \forall f,g\in \bar{\Gamma} \qquad D_{\infty}
		\left(S_n(\cdot,\cdot,f;\omega),S_n(\cdot,\cdot,g;\omega)\right) \leq
		2^{\lceil \alpha T \rceil} \,
		d_{\infty,c}(f,g) + \eps.
              \end{equation}
	\end{lem}
\noindent The proof comes from an easy corollary of \eqref{eq:asymplipGW} (we
will prove
it in details at
the end of
this section).

	\begin{lem}[Pointwise relative compactness of $((s,t) \mapsto
		S_n(s,t,f;\omega))_{n \in \N}$ in $\F_T$]\label{lem:prc1}
		For any $\omega \in \Omega_0$ and $f \in \bar{\Gamma}$, the sequence
		$((s,t) \mapsto S_n(s,t,f;\omega))_{n \in \N}$ is contained in a
		compact set
		of
		$\F_{\T}$. Moreover, any limit point is continuous
		from~$\T$
		into $\bar{\Gamma}$.
	\end{lem}

\begin{proof}[Proof of Lemma \ref{lem:prc1}]
We want to show that for any fixed $f \in \bar{\Gamma}$, from any sub-sequence
of $((s,t) \mapsto S_n(s,t,f;\omega))_{n \in \N}$, we can find a uniformly
converging
sub-sub-sequence in $\F_{\T}$. We are going to apply once again Proposition
\ref{prop:ascoli}. The set $\F_{\T}$ is the set of functions from $\T$
which is
compact into $(\F(\R^2),d_{\infty,c})$ which is complete (by Proposition
\ref{prop:metrique}). Therefore, it is
enough to show asymptotic equi-continuity and pointwise relative compactness.

\begin{lem}[Pointwise relative compactness of $(
	S_n(s,t,f;\omega))_{n \in \N}$ in $\F(\R^2)$]\label{lem:prc2}
	For any $\omega \in \Omega_0$, $f \in \bar{\Gamma}$ and $0 \leq s \leq t
	\leq T$, the sequence
	$(S_n(s,t,f;\omega))_{n \in \N}$ is contained in a
	compact set
	of
	$\F(\R^2)$. Moreover, any limit point is in $\bar{\Gamma}$.
\end{lem}
\begin{proof}[Proof of Lemma \ref{lem:prc2}]
	As $\F(\R^2)$ is endowed with the topology of convergence on all
	compact sets of $\R^2$,
	it is enough to show asymptotic
	equi-continuity and pointwise relative compactness in order to apply
	Proposition
	\ref{prop:ascoli} once more.
	\begin{enumerate}
		\item \emph{Pointwise relative compactness:
			$(S_n(s,t,f;\omega)(x,y))_{n \in
				\N}\in
			\R$}\\
		By Bolzano-Weierstrass Theorem, it is enough to show that this sequence
		is bounded. The upper bound is a
		direct
		consequence
		of Proposition \ref{prop:controlsgrowth} while the lower bound is
		trivial since
		height functions are non-decreasing with time.
		\item \emph{Asymptotic equi-continuity of $((x,y) \mapsto
			S_n(s,t,f;\omega)(x,y))_{n \in \N}$}\\
		Let $(x,y) \in \R^2$. By the slope constraint for functions in
		$\Gamma$, it is easy to check that for any $n\in \N$, $x \in \R$ and
		$y<y'$:
		\begin{equation}\label{eq:controlslopey}
		S_n(s,t,f;\omega)(x,y')-S_n(s,t,f;\omega)(x,y)\in
		\left[-(\lfloor ny' \rfloor - \lfloor ny \rfloor)/n
		,0\right].
		\end{equation}
		By this and Proposition
		\ref{prop:difhGW},
		\begin{equation}\label{eq:asympteq,x,y}
		\begin{aligned}
		\limsup_{n \to \infty} \sup_{\substack{(x',y') \in \R^2 \\
				|x-x'|,|y-y'| \leq
				\delta}} &|S_n(s,t,f;\omega)(x,y) - S_n(s,t,f;\omega)(x',y')|\\
		&  \leq \limsup_{n \to \infty} \sup_{ \substack{x' \in
				[x-\delta,x+\delta]}}
		|S_n(s,t,f;\omega)(x,y) - S_n(s,t,f;\omega)(x',y)| + \delta \\
		& \leq  \sup_{\substack{x_1,x_2\in[x-\delta-T,x+\delta+T]
				\\ |x_2-x_1| \leq
				2 \delta}} |f(x_1,y)-f(x_2,y)| +
		C \, \sqrt{\delta} + \delta \ .
		\end{aligned}
		\end{equation}
		By uniform continuity of $f$ on any compact, the right-hand side tends
		to $0$
		when $\delta$ tends to~$0$.
	\end{enumerate}

		Therefore, by Proposition \ref{prop:ascoli}, any subsequence of
		$(S_n(s,t,f;\omega))_{n
			\in \N}$ has a subsequence that converges in
			$(\F(\R^2),d_{\infty,c})$ and any limit
		point is continuous. Actually, by taking the limit in
		\eqref{eq:controlslopey}, any limit point is in $\bar{\Gamma}$. This
		concludes the proof of Lemma \ref{lem:prc2}.
\end{proof}

To finish the proof of Lemma \ref{lem:prc1}, we are going to show
asymptotic equi-continuity of
$((s,t) \mapsto S_n(s,t,f;\omega))_{n \in \N}$.  Let us fix
$\omega \in \Omega_0$, $f \in \bar{\Gamma}$ and $(s,t) \in \T$. By
definition of $d_{\infty,c}$, it is enough to show that for any
$\eps>0$ and $R>0$ there exists $\delta>0$ such that:
\[
\limsup_{n \to \infty} \sup_{\substack{(s',t') \in \T \\ |s'-s|\leq
		\delta, \,  |t'-t|\leq \delta}}
\| S_n(s,t,f;\omega)-S_n(s',t',f;\omega)\|_{\infty}^{[-R,R]^2} \leq \eps \ .
\]
We claim that for any $(s',t')\in \T$, there exists some $r \leq t
\wedge t'$ and $u \geq s \vee s'$ such that
\begin{equation}\label{eq:r,u}
\begin{aligned}
\| S_n&(s,t,f;\omega)-S_n(s',t',f;\omega)\|_{\infty}^{[-R,R]^2}\\
& \leq \|
S_n(r,t,f;\omega)-S_n(r,t',f;\omega)\|_{\infty}^{[-R,R]^2} + \|
S_n(s,u,f;\omega)-S_n(s',u,f;\omega)\|_{\infty}^{[-R,R]^2}.
\end{aligned}
\end{equation}
Indeed, at least one of the two conditions occurs:
$s \leq t'$ or $s'
\leq t$. In the first case, \eqref{eq:r,u} holds with $(r,u)=(s,t')$ while in
the second case, $(r,u)=(s',t)$. Therefore, asymptotic equi-continuity of
$((s,t)
\mapsto
S_n(s,t,f;\omega))_{n \in \N}$ follows from the next Lemma.
\begin{lem}
	For all $\omega \in \Omega_0$, $(s,t) \in \T$,  $f \in \bar{\Gamma}$, $R>0$
	and $\eps>0$, there exists $\delta>0$ such that
	\begin{equation}\label{eq:s,t,t'}
	\limsup_{n \to \infty} \sup_{\substack{r,t' \in [0,T] \\ r \leq t \wedge
			t',
			\,  |t'-t|
			\leq \delta}} \|
	S_n(r,t,f;\omega)-S_n(r,t',f;\omega)\|_{\infty}^{[-R,R]^2}
	\leq \eps
	\end{equation}
	and
	\begin{equation}\label{eq:s,s',t}
	\limsup_{n \to \infty} \sup_{\substack{u,s' \in [0,T] \\ u \geq s \vee s' ,
			\, |s'-s|
			\leq
			\delta}} \|
	S_n(s,u,f;\omega)-S_n(s',u,f;\omega)\|_{\infty}^{[-R,R]^2} \leq \eps.
	\end{equation}
\end{lem}

\begin{proof}
	We are going to prove
	\eqref{eq:s,t,t'} first. Let $(x,y) \in \R^2$ and suppose first  that $r\leq
	t \leq t'$. By
	Lemma~\ref{lem:markovGW},
	\begin{align*}
	S_n(r,t',f;\omega)(x,y) &= \frac{1}{n}h\left(nx,\lfloor ny \rfloor,
	n(t'-r),
	\varphi_n^f; \tau_{nr}\omega\right) \\
	& =\frac{1}{n}h\left(nx,\lfloor ny \rfloor, n(t'-t), h(n\cdot,\lfloor n
	\cdot \rfloor,n(t-r),\varphi_n^f,\tau_{nr}\omega); \tau_{nt}\omega\right).
	\end{align*}
	Now, by Lemma \ref{lem:varformPNG} applied with initial condition $\psi :=
	h(n\cdot,\lfloor n
	\cdot \rfloor,n(t-r),\varphi_n^f,\tau_{nr}\omega)$, we get
	\begin{align*}
	S_n(r,t',f;\omega)(x,y) &= \sup_{|z-x|\leq |t'-t|} \left\{
	\frac{1}{n}\psi(z,y) + \frac{1}{n}
	L^{\uparrow}\left((\tau_{nt}\omega)^{\psi}_{\lfloor ny \rfloor} \cap n
	R_{(z,0),(x,t'-t)}\right)  \right\} \\
	& \leq \sup_{|z-x|\leq |t'-t|} \left\{
	\frac{1}{n}\psi(z,y) + \frac{1}{n}
	L^{\uparrow}\left((\tau_{nt}\omega)_{\lfloor ny \rfloor} \cap n
	R_{(z,0),(x,t'-t)}\right)  \right\}\\
	& =  \sup_{|z-x|\leq |t'-t|} \left\{
	S_n(r,t,f;\omega)(z,y) + \frac{1}{n}
	L^{\uparrow}\left(\omega_{\lfloor ny \rfloor} \cap n
	R_{(z,t),(x,t')}\right)  \right\} \\
	& \leq \sup_{|z-x|\leq |t'-t|} S_n(r,t,f;\omega)(z,y) +\frac{1}{n}
	L^{\uparrow}\left(\omega_{\lfloor ny \rfloor} \cap n
	R_{(x,t-|t'-t|),(x,t')}\right),
	\end{align*}
	since $R_{(z,t),(x,t')} \subseteq R_{(x,t-|t'-t|),(x,t')}$ for all
	$|z-x|\leq |t'-t|$. Similarly if $r \leq t' \leq t$,
	\[
	S_n(r,t,f;\omega)(x,y) \leq \sup_{|z-x|\leq |t'-t|} S_n(r,t',f;\omega)(z,y)
	+\frac{1}{n}
	L^{\uparrow}\left(\omega_{\lfloor ny \rfloor} \cap n
	R_{(x,t'-|t'-t|),(x,t)}\right).
	\]
	In any case, since $R_{(x,t-2\delta),(x,t+\delta)}$ contains both
	$R_{(x,t-|t'-t|),(x,t')}$ and $R_{(x,t'-|t'-t|),(x,t)}$,
	\begin{align*}
	&\sup_{\substack{r,t' \in [0,T] \\ r \leq t \wedge
			t',
			\,  |t'-t|
			\leq \delta}}\left| S_n(r,t',f;\omega)(x,y) -
	S_n(r,t,f;\omega)(x,y) \right| \\
	&\leq \sup_{\substack{z \in [x-\delta,x+\delta] \\ r \leq t\wedge t'}}
	\left|
	S_n(r,t\wedge t',f;\omega)(z,y) - S_n(r,t\wedge t',f;\omega)(x,y) \right|
	+ \frac{1}{n} L^{\uparrow}\left(\omega_{\lfloor ny \rfloor} \cap n
	R_{(x,t-2\delta),(x,t+\delta)}\right).
	\end{align*}
	Therefore, by Proposition \ref{prop:difhGW} and Lemma \ref{lem:lis}, since
	$\Leb \left(R_{(x,t-2\delta),(x,t+\delta)}\right) = (3\delta)^2/2$,
	\begin{equation}\label{eq:s,t,t',x,y}
	\begin{aligned}
	\limsup_{n \to \infty}\sup_{\substack{r,t' \in [0,T] \\ r \leq t \wedge
			t',
			\,  |t'-t|
			\leq \delta}}& \left| S_n(r,t',f;\omega)(x,y) -
	S_n(r,t,f;\omega)(x,y) \right|  \\
	& \leq \sup_{\substack{x_1,x_2 \in [x-\delta-T,x+\delta + T] \\
			|x_2-x_1|\leq 2 \delta}}|f(x_1,y)-f(x_2,y)| + C \, \sqrt{\delta} +
			c \,
	\frac{3}{\sqrt{2}} \delta \ .
	\end{aligned}
	\end{equation}
	The right-hand side tends to $0$ when $\delta$ goes to $0$. To finish off
	the proof of \eqref{eq:s,t,t'}, we need to get a uniform control in $(x,y)
	\in [-R,R]^2$. To do this, we cover the rectangle $[-R,R]^2$ by a finite
	union of balls of radius $\delta$. Let $(x_1,y_1),\cdots,(x_p,y_p)$ be the
	centers of these balls. By \eqref{eq:asympteq,x,y}, for any $i$,
	\[
	\begin{aligned}
	\limsup_{n \to \infty}\sup_{\substack{(r,r') \in \T \\ (x,y)\in
			\mathcal{B}((x_i,y_i),\delta)}} &\left| S_n(r,
			r',f;\omega)(x,y) -
	S_n(r, r',f;\omega)(x_i,y_i)
	\right| \\
	&\leq \sup_{\substack{x_1,x_2\in[x-\delta-T,x+\delta+T]
			\\ |x_2-x_1| \leq
			2 \delta}} |f(x_1,y)-f(x_2,y)| +
	C \, \sqrt{\delta} + \delta,
	\end{aligned}
	\]
	This bound proves the uniform control in $(x,y) \in
	\mathcal{B}((x_i,y_i),\delta)$. Since
	\eqref{eq:s,t,t',x,y} holds simultaneously for all $(x_i,y_i)$,
	\eqref{eq:s,t,t'} holds for any $\delta>0$ chosen small enough.

	\medskip

	Let us now prove
	\eqref{eq:s,s',t}. If $s
	\leq s' \leq u$, by Lemma \ref{lem:markovGW},
	\begin{align*}
	&& S_n(s,u,f;\omega) &= \frac{1}{n}h\left(n\cdot,\lfloor n \cdot
	\rfloor,n(u-s'),h(n\cdot,\lfloor n \cdot \rfloor,n(s'-s),\varphi_n^f;
	\tau_{ns} \omega); \tau_{ns'} \omega \right),\\
	\text{and} && S_n(s',u,f;\omega) &= \frac{1}{n} h(n\cdot,\lfloor n \cdot \rfloor,n(u-s'),\varphi_n^f;
	\tau_{ns'} \omega).
	\end{align*}
	Therefore, by Corollary \ref{coro:asymlocal}, there exists $N(\omega) \in \N$ such that for all
	$n \geq N(\omega)$ and all $s \leq s' \leq u \leq T$,
	\begin{align*}
	\| S_n(s,u,f;\omega)&-S_n(s',u,f;\omega)\|_{\infty}^{[-R,R]^2} \\ &\leq
	\left\|
	\frac{1}{n}h(n\cdot,\lfloor n \cdot \rfloor,n(s'-s),\varphi_n^f; \tau_{ns}
	\omega)-\frac{1}{n}\varphi_n^f(n\cdot,\lfloor n \cdot
	\rfloor)\right\|_{\infty}^{[-R-\alpha (u-s'),R + \alpha (u-s')]^2} \\
	& \leq \left\|
	S_n(s,s',f;\omega)-S_n(s,s,f;\omega)\right\|_{\infty}^{[-R-\alpha T,R +
		\alpha T]^2}.
	\end{align*}
	We can do similarly for $s'\leq s \leq u$ and finally get that for all $n
	\geq N(\omega)$,
	\begin{align*}
	\sup_{\substack{u,s' \in [0,T] \\ u \geq s \vee s' ,
			\, |s'-s|
			\leq
			\delta}}& \|
	S_n(s,u,f;\omega)-S_n(s',u,f;\omega)\|_{\infty}^{[-R,R]^2} \\
	&\leq \sup_{\substack{s' \in [0,T] \\ |s-s'| \leq \delta}} \| S_n(s\wedge
	s',s,f;\omega)-S_n(s\wedge s',s',f;\omega)\|_{\infty}^{[-R-\alpha T,R +
		\alpha T]^2}
	\end{align*}
	and the proof is concluded by the first case \eqref{eq:s,t,t'} (with
	$(t,t')=(s,s')$ and $r= s \wedge s'$).
\end{proof}

This shows the asymptotic equi-continuity of $((s,t) \mapsto
S_n(s,t,f;\omega))_{n \in \N}$. Together with Lemma \ref{lem:prc2} and
Proposition \ref{prop:ascoli}, this concludes the proof of Lemma \ref{lem:prc1}.
\end{proof}

The proof of Proposition \ref{prop:compacity} is complete up to showing
Lemma \ref{lem:asymplipschitz}.
\end{proof}

\begin{proof}[Proof of Lemma \ref{lem:asymplipschitz}]
	Let $\omega \in \Omega_0$, $\eps>0$ and $I \in \N$ such that $2^{-I}
	\leq \eps/2$. By
	definition of the
	metric $D_{\infty}$, for any $f,g \in \bar{\Gamma}$,
	\begin{align*}
	D_{\infty}
	\left(S_n(\cdot,\cdot,f;\omega),S_n(\cdot,\cdot,g;\omega)\right) & =
	\sup_{0\leq s \leq t \leq T}
	\sum_{i=1}^{\infty}
	2^{-i} \left(
	\|S_n(s,t,f;\omega)-
	S_n(s,t,g;\omega)\|_{\infty}^{[-i,i]^2}\wedge 1  \right)\\
	&  \hspace{-1cm}\leq \sup_{0\leq s \leq t \leq T}
	\sum_{i=1}^{I}
	2^{-i} \left(
	\|S_n(s,t,f;\omega)-
	S_n(s,t,g;\omega)\|_{\infty}^{[-i,i]^2}\wedge 1  \right) + \eps/2.
	\end{align*}
	Now, by \eqref{eq:asymplipGW}, there exists $N(\omega)\in \N$ such that for
	all $n \geq N(\omega)$ for all $f,g \in \bar{\Gamma}$ and all $0 \leq s
	\leq t \leq T$,
	\begin{align*}
	\|S_n(s,t,f;\omega)-
	S_n(s,t,g;\omega)\|_{\infty}^{[-i,i]^2} &\leq
	\left\|\frac{1}{n}\varphi_n^f(n\cdot,\lfloor n\cdot \rfloor)-
	\frac{1}{n}\varphi_n^g(n\cdot,\lfloor n\cdot
	\rfloor)\right\|_{\infty}^{[-i-\alpha T,i+\alpha T]^2}\\
	& \leq \|f-g\|_{\infty}^{[-i-\alpha T,i+\alpha T]^2} + 4/n. &&
	\text{by \eqref{eq:approxphiGW}}
	\end{align*}
	Therefore, for all $n \geq \max(N(\omega),\eps/8)$ and all $f,g \in
	\bar{\Gamma}$,
	\begin{align*}
	D_{\infty}
	\left(S_n(\cdot,\cdot,f;\omega),S_n(\cdot,\cdot,g;\omega)\right) &\leq
	\sum_{i=1}^{I} 2^{-i} \left(
	\|f-g\|_{\infty}^{[-i-\alpha T,i+\alpha T]^2} \wedge 1  \right) +
	4/n + \eps/2\\
	& \leq 2^{\lceil \alpha T \rceil} \,  \sum_{i=1}^{\infty} 2^{-i-\lceil
		\alpha T
		\rceil} \left(
	\|f-
	g\|_{\infty}^{[-i-\lceil \alpha T
		\rceil,i+\lceil \alpha T
		\rceil]^2}\wedge 1  \right)+ \eps\\
	& \leq 2^{\lceil \alpha T \rceil} \,
	d_{\infty,c}(f,g) + \eps.
	\end{align*}
\end{proof}

\section{Identification of the limit}

\subsection{Properties of the limit points}

In this section, we are going to show that any subsequential limit of
$(S_{n}(.,.;f,\omega))_{n\in \N}$ (as in Definition
\ref{defi:S_n}) satisfies the sufficient conditions of Proposition
\ref{prop:condaxiom}, most of these properties being automatically
satisfied by the analogous microscopic properties stated in Section
\ref{sec:microscopic} or by Proposition \ref{prop:compacity}
concerning continuity.

\begin{prop}\label{prop:proplimiteGW}
	Let $\omega \in \Omega_0$ and $(n_k)_{k \in \N}$ a subsequence such that
	for all $f \in \bar{\Gamma}$, $(S_{n_k}(\cdot,\cdot;f,\omega))_{k \in \N}$
	converges to a certain $S(\cdot,\cdot;f,\omega)$ in $\F_{\T}$, i.e
	\[
	\forall R>0, \qquad \sup_{\substack{(s,t) \in \T \\ |x|,|y| \leq R }}
	\left|
	S_{n_k}(s,t,f;\omega)(x,y)-S(s,t,f;\omega)(x,y) \right| \underset{k \to
	\infty}{\longrightarrow} 0.
	\]
	Any such limit $(f \mapsto S(s,t,f;\omega))_{0\leq s \leq t \leq T}$
 is a family
	of
	continuous functions from $\bar{\Gamma}$ into itself satisfying the first
	four properties listed in	Proposition \ref{prop:condaxiom}.
        Moreover, for any $f
	\in
	\bar{\Gamma}$, $(s,t,x,y) \mapsto
	S(s,t,f;\omega)(x,y)$ is continuous.
\end{prop}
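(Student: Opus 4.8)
**

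The plan is to verify the first four properties of Proposition~\ref{prop:condaxiom} one at a time, in each case passing the corresponding microscopic property through the uniform-on-compacts limit. Since $S_{n_k}(s,t,f;\omega) \to S(s,t,f;\omega)$ uniformly on every compact set of $\R^2$, pointwise limits of inequalities and equalities are preserved, so the strategy is mechanical: (i) \emph{Translation invariance}: for $c\in\R$ one uses Proposition~\ref{prop:defphiGW} to relate $\varphi_n^{f+c}$ and $\varphi_n^f$. Strictly speaking the clean identity $\varphi_n^{f+n^{-1}\lfloor nc\rfloor}=\varphi_n^f+\lfloor nc\rfloor$ holds only for the shifts $n^{-1}\lfloor nc\rfloor$, so I would first establish translation invariance of $S(\cdot,\cdot,\cdot;\omega)$ along that sequence of shifts using Lemma~\ref{lem:transinv} at the discrete level, then extend to arbitrary $c\in\R$ by continuity of $f\mapsto S(s,t,f;\omega)$ (which is part of Proposition~\ref{prop:compacity}) together with $n^{-1}\lfloor nc\rfloor\to c$. (ii) \emph{Monotonicity}: if $f\le g$ in $\bar\Gamma$, then for each fixed $n_k$ one needs $\varphi_{n_k}^f\le\varphi_{n_k}^g$ — this is not literally guaranteed by Proposition~\ref{prop:defphiGW}, so the cleanest route is to note $\|\frac1n\varphi_n^f(n\cdot,\lfloor n\cdot\rfloor)-f\|_\infty\le 2/n$, hence $\varphi_n^f\le \varphi_n^g + 4$ on all of $\R\times\Z$; by Lemma~\ref{lem:monotoneGW} and Lemma~\ref{lem:transinv} this gives $S_n(s,t,f;\omega)\le S_n(s,t,g;\omega)+4/n$, and letting $n=n_k\to\infty$ yields $S(s,t,f;\omega)\le S(s,t,g;\omega)$.

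(iii) \emph{Locality}: this is exactly the content of Corollary~\ref{coro:asymlocal} (via Lemma~\ref{lem:locality}). For $\omega\in\Omega_0$, fixed $x\in\R^2$, $R\ge 0$ and $s\le t$, Corollary~\ref{coro:asymlocal} gives an $N(\omega)$ such that for all $n\ge N(\omega)$ and all $f,g\in\bar\Gamma$,
\[
\sup_{z\in\mathcal{B}(x,R)}|S_n(s,t,f;\omega)(z)-S_n(s,t,g;\omega)(z)|
\le \|\varphi_n^f(n\cdot,\lfloor n\cdot\rfloor)-\varphi_n^g(n\cdot,\lfloor n\cdot\rfloor)\|_\infty^{\mathcal{B}(x,R+\alpha(t-s))}/n,
\]
using the Markov property (Lemma~\ref{lem:markovGW}) with $u=ns$, $v=nt$ to rewrite $S_n(s,t,\cdot;\omega)$ in the form appearing in \eqref{eq:asymplipGW}. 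Bounding the right-hand side by $\|f-g\|_\infty^{\mathcal{B}(x,R+\alpha(t-s))}+4/n$ via \eqref{eq:approxphiGW} and sending $n_k\to\infty$ gives the locality inequality for $S$ with the same $\alpha>1$. (iv) \emph{Semi-group}: $S(r,t,\cdot;\omega)=S(s,t,\cdot;\omega)\circ S(r,s,\cdot;\omega)$ for $r\le s\le t$ should follow from the Markov property Lemma~\ref{lem:markovGW}: at the discrete level $S_{n}(r,t,f;\omega)=\frac1n h(n\cdot,\lfloor n\cdot\rfloor,n(t-r);\varphi_n^f,\tau_{nr}\omega)$ equals $\frac1n h(n\cdot,\lfloor n\cdot\rfloor,n(t-s);\psi_n,\tau_{ns}\omega)$ with $\psi_n:=h(n\cdot,\lfloor n\cdot\rfloor,n(s-r);\varphi_n^f,\tau_{nr}\omega)=nS_n(r,s,f;\omega)$, whereas the composition $S_n(s,t,S_n(r,s,f;\omega);\omega)$ uses instead $\varphi_n^{g_n}$ with $g_n:=S_n(r,s,f;\omega)$. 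The discrepancy between $\psi_n/n$ and $g_n$ is zero (they are the same function) but the discrepancy between $\psi_n$ and $\varphi_n^{g_n}$ is at most $2$ uniformly, by \eqref{eq:approxphiGW}; combining with monotonicity and translation invariance as in step (ii) shows $|S_n(r,t,f;\omega)-S_n(s,t,S_n(r,s,f;\omega);\omega)|\le 4/n$ on any ball. To pass to the limit along $n_k$ one also needs continuity of $g\mapsto S(s,t,g;\omega)$ and that $S_{n_k}(r,s,f;\omega)\to S(r,s,f;\omega)$ uniformly on compacts — both available — so that $S_{n_k}(s,t,S_{n_k}(r,s,f;\omega);\omega)\to S(s,t,S(r,s,f;\omega);\omega)$. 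The identity $S(t,t,f;\omega)=f$ is immediate since $S_n(t,t,f;\omega)=\frac1n\varphi_n^f(n\cdot,\lfloor n\cdot\rfloor)\to f$. The final continuity claim for $(s,t,x,y)\mapsto S(s,t,f;\omega)(x,y)$ is already furnished by Proposition~\ref{prop:compacity}.

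I expect the main technical obstacle to be the semi-group property: unlike the other three, it is not a single inequality but an identity between two compositions that are \emph{not} literally equal at the discrete level, because the reconstruction map $f\mapsto\varphi_n^f$ does not commute with the dynamics. The key point to handle carefully is the uniform $O(1/n)$ control on the mismatch between $\varphi_n^{g_n}$ and the true intermediate height function $nS_n(r,s,f;\omega)$, together with the continuity of $g\mapsto S(s,t,g;\omega)$ needed to legitimately pass the limit through the composition — this continuity is exactly why Proposition~\ref{prop:compacity} was stated with $f\mapsto S(s,t,f;\omega)$ continuous from $\bar\Gamma$ into itself. Everything else is routine limit-passing. (Note that property~5, compatibility with linear solutions, is deliberately \emph{not} part of this proposition — it requires the equilibrium analysis and is treated separately.)
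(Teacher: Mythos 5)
Your treatment of continuity, translation invariance, monotonicity and locality is essentially identical to the paper's: the $\varphi_n^{f}\le\varphi_n^{g}+4$ trick for monotonicity, the shift $n^{-1}\lfloor nc\rfloor$ combined with the asymptotic Lipschitz bound for translation invariance, and Corollary~\ref{coro:asymlocal} plus \eqref{eq:approxphiGW} for locality are exactly the paper's steps. (One small caveat: to pass a \emph{moving} argument such as $f+n_k^{-1}\lfloor n_kc\rfloor$ through $S_{n_k}$ you need the uniform asymptotic equi-continuity of Lemma~\ref{lem:asymplipschitz}, not merely continuity of the limit map $f\mapsto S(s,t,f;\omega)$; both are available, but they are not the same statement.)

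The semi-group step, however, contains a genuine gap as written. You form the composition $S_n(s,t,S_n(r,s,f;\omega);\omega)$ with $g_n:=S_n(r,s,f;\omega)$, which requires constructing $\varphi_n^{g_n}$. But $S_n(\cdot,\cdot,\cdot;\omega)$ is only defined on $\bar{\Gamma}$ (Definition~\ref{defi:S_n}), and $g_n$ is a rescaled \emph{discrete} height profile, hence a discontinuous step function not in $\bar{\Gamma}$; the construction of $\varphi_n^{g}$ in Proposition~\ref{prop:defphiGW} uses the continuity of $g$ in an essential way (the intermediate value property is what guarantees that the levels $nf(X_i^y/n,y/n)$ are integers and that consecutive jumps are $\pm1$). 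So the object whose discrepancy with $S_n(r,t,f;\omega)$ you estimate by $4/n$ is not defined. The paper's fix is to compose with the \emph{limit} instead: since $S(r,s,f;\omega)\in\bar{\Gamma}$ (Proposition~\ref{prop:compacity}), the quantity $S_{n_k}(s,t,S(r,s,f;\omega);\omega)$ makes sense, and by the Markov property $S_{n_k}(r,t,f;\omega)$ is the dynamics run from the true microscopic intermediate profile $\psi_{n_k}=h(n\cdot,\lfloor n\cdot\rfloor,n(s-r),\varphi_{n_k}^f;\tau_{nr}\omega)\in\Gamma$. Corollary~\ref{coro:asymlocal} then bounds
\[
\bigl\|S_{n_k}(r,t,f;\omega)-S_{n_k}(s,t,S(r,s,f;\omega);\omega)\bigr\|_{\infty}^{[-R,R]^2}
\le \bigl\|S_{n_k}(r,s,f;\omega)-S(r,s,f;\omega)\bigr\|_{\infty}^{[-R-\alpha T,R+\alpha T]^2}+\tfrac{2}{n_k},
\]
which tends to $0$ by hypothesis, and $S_{n_k}(s,t,S(r,s,f;\omega);\omega)\to S(s,t,S(r,s,f;\omega);\omega)$ by the assumed convergence applied to the fixed function $S(r,s,f;\omega)\in\bar{\Gamma}$. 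This sidesteps both the domain problem and the moving-argument problem in one stroke. Your argument becomes correct once you replace $g_{n}$ by $S(r,s,f;\omega)$ in the composition; the remaining items are fine.
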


\begin{proof}
	\noindent \underline{- Continuity :} By Proposition \ref{prop:compacity},
	 for all $f \in
	\bar{\Gamma}$, $(s,t) \mapsto S(s,t,f;\omega)$ is continuous from $\T$ into
	$\bar{\Gamma}$ (which is composed of continuous functions) hence $(s,t,x,y)
	\mapsto
	S(s,t,f;,\omega)(x,y)$ is continuous.
	\noindent \underline{- Translation invariance :}
        For any $c \in \R$,
	$s \leq t$
	and $k \in \N$, by Lemma
	\ref{lem:transinv} and by translation invariance property of $
	\varphi_n^f$ stated in Proposition \ref{prop:defphiGW},
	\begin{align*}
	S_{n_k}(s,t;f+ n_k^{-1} \lfloor n_kc \rfloor,\omega) & =
	S_{n_k}(s,t;f,\omega) + \frac{1}{n_k}\lfloor
	n_kc\rfloor .
	\end{align*}
	When $k$ goes to infinity, the right-hand side tends to
	$S(s,t;f,\omega) + c$ in $(\F(\R^2),d_{\infty,c})$ while the
	left-hand side goes to $S(s,t;f+c,\omega)$ by  Lemma
	\ref{lem:asymplipschitz}.

	\noindent \underline{- Monotonicity :} By \eqref{eq:approxphiGW}, if $f
	\leq g$, then for all $k \in \N$, $\varphi_{n_k}^f \leq
	\varphi_{n_k}^g +
	4$ so by Lemmas \ref{lem:monotoneGW} and \ref{lem:transinv},
	\[
	S_{n_k}(s,t,f;\omega) \leq
	S_{n_k}(s,t,g;\omega) + 4/n_k.
	\]
	Monotonicty follows by taking the limit $k\to\infty$.

	\noindent \underline{- Locality :} It is a direct consequence of Corollary
	\ref{coro:asymlocal} and \eqref{eq:approxphiGW}.

	\noindent \underline{- Semi-group :} the fact that $S(t,t,f)=f$, for all $t
	\in [0,T]$ and $f
	\in \bar{\Gamma}$ is an immediate consequence of \eqref{eq:approxphiGW}.
	Now, for any $0
	\leq r \leq s \leq t \leq T$, we have by Lemma \ref{lem:markovGW},
	\begin{align*}
	S_{n_k}(r,t,f;\omega) = \frac{1}{n_k} h(n\cdot, \lfloor n \cdot
	\rfloor,n(t-s),h(\cdot,
	\cdot,n(s-r),\varphi_{n_k}^f;\tau_{nr}\omega);\tau_{ns}\omega),
	\end{align*}
	and since $S(r,s,f;\omega) \in \bar{\Gamma}$, we can apply
	$S_{n_k}(s,t,\cdot; \omega)$
	and write
	\[
	S_{n_k}(s,t,S(r,s,f;\omega);\omega) = \frac{1}{n_k} h(n\cdot, \lfloor n
	\cdot
	\rfloor,n(t-s),\varphi_{n_k}^{S(r,s,f;\omega)};\tau_{ns}\omega).
	\]
	Therefore, by Corollary \ref{coro:asymlocal}, for all $R\geq0$ and $k$
	large
	enough,
	\begin{align*}
	&\|S_{n_k}(r,t,f;\omega)-S_{n_k}(s,t,S(r,s,f;\omega);\omega)
	\|_{\infty}^{[-R,R]^2} \\
	& \hspace{2cm} \leq \left\| \frac{1}{n_k}h(n\cdot, \lfloor n
	\cdot
	\rfloor,n(s-r),\varphi_{n_k}^f;\tau_{nr}\omega) - \frac{1}{n_k}
	\varphi_{n_k}^{S(r,s,f;\omega)}(n\cdot, \lfloor n
	\cdot
	\rfloor)\right\|_{\infty}^{[-R-\alpha T,R + \alpha T]^2}\\
	& \hspace{2cm}  \leq \left\| S_{n_k}(r,s,f;\omega) -
	S(r,s,f;\omega)\right\|_{\infty}^{[-R-\alpha T,R + \alpha T]^2} +
	\frac{2}{n_k}
	&&\text{by \eqref{eq:approxphiGW}}
	\end{align*}
	which tends to zero when $k$ goes to infinity. Consequently, for all $R\geq 0$,
	\begin{align*}
	& \left\| S(r,t,f;\omega) -
	S(s,t,S(r,s,f;\omega);\omega) \right\|_{\infty}^{[-R,R]^2}  \\
	& \hspace{3cm}= \lim_{k \to
	\infty} \left\| S_{n_k}(r,t,f;\omega) -
	S_{n_k}(s,t,S(r,s,f;\omega);\omega) \right\|_{\infty}^{[-R,R]^2} = 0,
	\end{align*}
	which concludes the
	proof of the semi-group property.
\end{proof}

\subsection{Hydrodynamic limit for linear initial profiles}
The only condition missing to apply Proposition \ref{prop:condaxiom} is the
compatibility with linear initial profiles. We start with the following result:
\begin{prop}\label{prop:limitelinearGW}
  For all $\rho \in \R \times (-1,0)$, all $t\in[0,T]$
	and all $(x,y) \in
	\R^2$:
	\begin{equation}\label{eq:convergelimhydrolinear}
	\omega-\mathrm{a.s} \qquad S_{n}(0,t;f_{\rho},\omega)(x,y) \underset{n \to
	\infty}{\longrightarrow}
	f_{\rho}(x,y) + t \, v(\rho),
      \end{equation}
	with $f_{\rho} := (x,y) \mapsto \rho\cdot(x,y)$.
      \end{prop}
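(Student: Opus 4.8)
The plan is to sandwich the evolution from the discretisation $\varphi_n^{f_\rho}$ of the linear profile between evolutions started from the Prähofer--Spohn stationary ensemble $\varphi_{M,N,\rho}$ of the same slope, shifted vertically by $\pm\lceil\eps n\rceil$, and then to use that for the stationary ensemble the macroscopic height at any point is known explicitly from \eqref{eq:speedequilibrum}. Fix $\rho\in\R\times(-1,0)$, $t\in[0,T]$, $(x,y)\in\R^2$ and write $\ell:=\rho\cdot(x,y)+t\,v(\rho)=f_\rho(x,y)+t\,v(\rho)$. It suffices to prove, $\omega$-a.s., the two one-sided bounds $\limsup_n S_n(0,t;f_\rho,\omega)(x,y)\le\ell$ and $\liminf_n S_n(0,t;f_\rho,\omega)(x,y)\ge\ell$; by symmetry I only detail the first.

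\medskip
\noindent\emph{The comparison.} Fix $\eps>0$ and set $k_n:=\lceil\eps n\rceil$. Choose torus sizes $M(n)=N(n)$ growing linearly in $n$, large enough that (i) the space--time box $\mathcal B\big((nx,\lfloor ny\rfloor),n(1+\alpha t)\big)\times[0,nt]$ lies in the fundamental domain $[-M(n),M(n))\times\llbracket-N(n),N(n)-1\rrbracket\times\R_+$, so that $[\omega]^{M(n),N(n)}=\omega$ there by \eqref{eq:poissonperio}, and (ii) the finite-volume mean of the stationary height at $(nx,\lfloor ny\rfloor,nt)$ differs from its infinite-volume value $n\ell+\bigo(1)$ (given by \eqref{eq:speedequilibrum}) by $\mathrm{o}(n)$. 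From Proposition \ref{prop:defphiGW} one has $|\varphi_n^{f_\rho}(a,b)-\rho\cdot(a,b)|\le1$, and from a concentration estimate for $\varphi_{M(n),N(n),\rho}$ on that box (see below) one gets, outside an event of summable probability, $\varphi_n^{f_\rho}\le\varphi_{M(n),N(n),\rho}+k_n$ on $n\mathcal B\big((x,y),1+\alpha t\big)$ (in rescaled coordinates). Put $\psi_n^+:=\varphi_n^{f_\rho}\vee(\varphi_{M(n),N(n),\rho}+k_n)\in\Gamma$, which coincides with $\varphi_{M(n),N(n),\rho}+k_n$ on that ball. Then, for $n$ larger than the a.s.-finite threshold of Corollary \ref{coro:asymlocal} and on the event $A_{nx,\,n,\,nt,\,\alpha}$ of Proposition \ref{prop:prolinGW},
\begin{align*}
h(nx,\lfloor ny\rfloor,nt;\varphi_n^{f_\rho},\omega)
&\le h(nx,\lfloor ny\rfloor,nt;\psi_n^+,\omega)
&&\text{(Lemma \ref{lem:monotoneGW})}\\
&\le h(nx,\lfloor ny\rfloor,nt;\varphi_{M(n),N(n),\rho}+k_n,\omega)
&&\text{(Corollary \ref{coro:asymlocal})}\\
&= h(nx,\lfloor ny\rfloor,nt;\varphi_{M(n),N(n),\rho},\omega)+k_n
&&\text{(Lemma \ref{lem:transinv})}\\
&= h(nx,\lfloor ny\rfloor,nt;\varphi_{M(n),N(n),\rho},[\omega]^{M(n),N(n)})+k_n,
\end{align*}
the last line because $\omega$ and $[\omega]^{M(n),N(n)}$ agree on the truncated space--time cone that determines the value at $(nx,\lfloor ny\rfloor,nt)$. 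Dividing by $n$ and letting $n\to\infty$: $k_n/n\to\eps$, the expectation of the last term tends to $\ell$ by \eqref{eq:speedequilibrum} and (ii), and its variance is $\bigo(\log n)$ — combine time-stationarity \eqref{eq:stationary} with the logarithmic spatial bound \eqref{eq:logfluct} applied to $h(nx,\lfloor ny\rfloor,nt)-h(0,0,nt)$ and with the logarithmic bound on the temporal fluctuation $h(0,0,nt)$ of the stationary process (Lemma \ref{lem:varh0}).

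\medskip
\noindent\emph{Passing to almost-sure convergence.} The display bounds $\Pro\big(S_n(0,t;f_\rho,\omega)(x,y)>\ell+3\eps\big)$ by a sum of three terms: the probability that the sandwich fails, the term $\bigo(e^{-\gamma nt})$ from Proposition \ref{prop:prolinGW}, and a deviation probability for the stationary height; the auxiliary randomness of $\varphi_{M(n),N(n),\rho}$ is then marginalised out since $S_n$ depends only on $\omega$. To apply Borel--Cantelli, each of these must be summable in $n$; this is where one needs, beyond the mere $\bigo(\log)$ variances recalled in Section \ref{sec:equilibrium}, genuine (Gaussian-type) concentration of the linear statistics $\varphi_{M,N,\rho}(a,b)$ — and of the occupation/kink counts entering the supremum over the box — which follows from the determinantal structure of the equilibrium ensemble (negative association), together with a one-dimensional maximal inequality to handle the continuum in the $x$-direction and the logarithmic control of the temporal fluctuation from Lemma \ref{lem:varh0}. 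Borel--Cantelli then gives $\limsup_n S_n(0,t;f_\rho,\omega)(x,y)\le\ell+3\eps$ for a.e.\ $\omega$, and intersecting over $\eps=1/j$ yields $\limsup\le\ell$ a.s. The lower bound is proved identically, sandwiching from below with $\psi_n^-:=\varphi_n^{f_\rho}\wedge(\varphi_{M(n),N(n),\rho}-k_n)$, which gives $\liminf\ge\ell$ a.s.

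\medskip
\noindent I expect the main difficulty to lie in this final almost-sure upgrade: the comparison argument itself yields convergence in probability directly from the $\bigo(\log)$ variance and Chebyshev, but obtaining summable tails — hence Borel--Cantelli and the strong statement — requires extracting honest exponential concentration from the explicit determinantal kernel of Prähofer--Spohn (and likewise for the temporal-fluctuation estimate of Lemma \ref{lem:varh0}), and then carrying the nested approximations (discretised linear profile $\to$ vertically shifted equilibrium on a large box $\to$ periodised equilibrium $\to$ infinite-volume Prähofer--Spohn value) through with all ball and torus sizes chosen consistently.
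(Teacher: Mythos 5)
Your overall strategy is the same as the paper's: compare the evolution started from the discretised linear profile with the evolution started from the Pr\"ahofer--Spohn stationary profile of the same slope (via monotonicity and locality), then use the stationary mean \eqref{eq:speedequilibrum} and the variance bounds to conclude by Borel--Cantelli. Your sandwich with $\pm\lceil\eps n\rceil$ vertical shifts is a cosmetic variant of the paper's direct use of the Lipschitz-in-initial-condition bound (Lemma \ref{lem:locality}). However, there are genuine gaps in how you close the argument.

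First and most importantly, your claim that the $\bigo(\log)$ variance bounds are insufficient for Borel--Cantelli and that one must extract ``honest exponential concentration'' from the determinantal kernel is a misconception, and the exponential concentration you invoke is never established --- you only gesture at ``negative association'' and an unspecified maximal inequality. In fact no such input is needed: the relevant quantities ($h(nx,\lfloor ny\rfloor,nt;\varphi_{M,N,\rho})$, the kink counts, etc.) are of macroscopic size $n$ with variance $\bigo(\log n)$, so Chebyshev gives deviation probabilities of order $\log n/n^2$, which is summable. This is exactly what the paper does in terms B and C of \eqref{eq:decompequilibre}. As written, your proof rests on an unproven (and unnecessary) concentration claim, so it does not close.

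Second, you wave at ``a concentration estimate for $\varphi_{M(n),N(n),\rho}$ on that box'' to get the uniform comparison $\varphi_n^{f_\rho}\le\varphi_{M,N,\rho}+k_n$ on a ball of radius $\bigo(n)$. This uniform-in-space control is precisely Lemma \ref{lem:uniformequilibrium}, and it is not routine: controlling the height variation along a single line $\{Y\}$ via the kink count on that line only gives $\bigo(n)$, which is useless; the paper's proof needs the averaging trick over $Y$ in a window of width $2\delta n$ combined with the two-dimensional kink-count variance \eqref{eq:varkinks}. Your proposal does not address this. Third, coupling the torus sizes $M(n)=N(n)$ linearly to $n$ requires quantitative rates for the convergence of finite-volume means and variances to their infinite-volume limits (your condition (ii)), which the iterated-limit statements of Section \ref{sec:equilibrium} do not provide; the paper avoids this by fixing $n$, letting $M,N\to\infty$ first, and absorbing the discrepancy between $\omega$ and $[\omega]^{M,N}$ into the vanishing term A via Proposition \ref{prop:prolinGW}.
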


Before proving this Proposition, let us show the following Corollary that gives
the compatibility with linear solutions.
\begin{coro}
	\label{coro:complinearGW}
	There exists $\Omega_1 \subseteq \Omega$ of probability one such that for
	all $\omega \in \Omega_0 \cap \Omega_1$, if $(n_k)_{k \in \N}$ is a
	subsequence such that for all $f \in \bar{\Gamma}$,
	$(S_{n_k}(\cdot,\cdot;f,\omega))_{k \in \N}$ converges towards
	$S(\cdot,\cdot;f,\omega)$ in $\F{_\T}$, then
	\begin{equation}
	\forall \rho \in \R \times [-1,0] \quad \forall 0 \leq s \leq t \leq T
	 \qquad S(s,t,f_{\rho};\omega) =
	f_{\rho} +(t-s)v(\rho).
	\end{equation}
\end{coro}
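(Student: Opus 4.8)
\subsection*{Proof plan for Corollary \ref{coro:complinearGW}}

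The plan is to upgrade the parameterwise almost-sure convergence of Proposition~\ref{prop:limitelinearGW} into a single probability-one statement valid simultaneously for all linear profiles, all $s\le t$ and all $(x,y)$, by combining a density argument with the continuity and algebraic properties of the limit $S$ recorded in Propositions~\ref{prop:proplimiteGW} and~\ref{prop:compacity}.

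First I would fix a countable dense family of parameters, for instance $\mathcal D := \big(\Q^2\cap(\R\times(-1,0))\big)\times\big(\Q\cap[0,T]\big)\times\Q^2$, and for each $(\rho,t,x,y)\in\mathcal D$ invoke Proposition~\ref{prop:limitelinearGW} to obtain a full-probability event on which $S_n(0,t;f_\rho,\omega)(x,y)\to f_\rho(x,y)+t\,v(\rho)$; intersecting these countably many events defines $\Omega_1$ with $\Pro(\Omega_1)=1$. Now fix $\omega\in\Omega_0\cap\Omega_1$ and a subsequence $(n_k)$ along which $S_{n_k}(\cdot,\cdot;f,\omega)\to S(\cdot,\cdot;f,\omega)$ in $\F_\T$ for every $f\in\bar\Gamma$. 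On $\Omega_1$ the \emph{full} sequence converges at each parameter of $\mathcal D$, so the subsequence does too; comparing this with the convergence in $\F_\T$ (which forces pointwise convergence of $S_{n_k}(0,t,f_\rho;\omega)(x,y)$ to $S(0,t,f_\rho;\omega)(x,y)$) yields
\[
S(0,t,f_\rho;\omega)(x,y)=f_\rho(x,y)+t\,v(\rho)\qquad\text{for all }(\rho,t,x,y)\in\mathcal D.
\]

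Next I would remove the rationality restrictions one at a time using the regularity of $S$. Since $S(0,t,f_\rho;\omega)$ is continuous in $(x,y)$ and $(x,y)\mapsto f_\rho(x,y)+t\,v(\rho)$ is continuous, the identity extends to all $(x,y)\in\R^2$; since $(s,t)\mapsto S(s,t,f_\rho;\omega)$ is continuous (Proposition~\ref{prop:compacity}), it extends to all $t\in[0,T]$; and since $f\mapsto S(0,t,f;\omega)$ is continuous from $\bar\Gamma$ into itself, approximating an arbitrary $\rho\in\R\times[-1,0]$ (note $f_\rho\in\bar\Gamma$ also on the boundary) by rational interior slopes $\rho^{(j)}\to\rho$, for which $f_{\rho^{(j)}}\to f_\rho$ in $d_{\infty,c}$ and $v(\rho^{(j)})\to v(\rho)$ by continuity of $v$, gives $S(0,t,f_\rho;\omega)=f_\rho+t\,v(\rho)$ for all $\rho\in\R\times[-1,0]$ and all $t\in[0,T]$. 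Finally I would pass to general $s$ purely algebraically: for $0\le s\le t\le T$ the semi-group property together with the case just proved at time $s$ gives
\[
f_\rho+t\,v(\rho)=S(0,t,f_\rho;\omega)=S\big(s,t,S(0,s,f_\rho;\omega);\omega\big)=S\big(s,t,f_\rho+s\,v(\rho);\omega\big),
\]
and translation invariance turns the last term into $S(s,t,f_\rho;\omega)+s\,v(\rho)$, whence $S(s,t,f_\rho;\omega)=f_\rho+(t-s)\,v(\rho)$.

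The only genuinely substantive ingredient here is Proposition~\ref{prop:limitelinearGW} (the stationary-process analysis performed below); everything else is the already-established structure of $S$ together with a density/continuity bookkeeping. The one place to be careful is that every approximation must remain inside $\bar\Gamma$ and be taken in $d_{\infty,c}$, which is why boundary slopes are approached from the interior and why one relies on the continuity of $f\mapsto S(s,t,f;\omega)$ and of $v$ rather than on any uniform estimate; note also that the identity for $s>0$ is obtained for free from the semi-group and translation-invariance properties, so Proposition~\ref{prop:limitelinearGW} is only needed at $s=0$. I expect this bookkeeping, rather than any conceptual difficulty, to be the only thing requiring attention.
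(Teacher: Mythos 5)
Your proposal is correct and follows essentially the same route as the paper: define $\Omega_1$ by intersecting the full-probability events of Proposition~\ref{prop:limitelinearGW} over a countable dense set of parameters, extend the identity successively by continuity in $(x,y)$, $t$ and $\rho$ (using Propositions~\ref{prop:compacity} and~\ref{prop:proplimiteGW}), and obtain the case $s>0$ from the semi-group and translation-invariance properties. No gaps.
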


\begin{proof}[Proof of Corollary \ref{coro:complinearGW}]
	By Proposition \ref{prop:limitelinearGW}, there exists a subset $\Omega_1
	\subseteq \Omega$ of probability one such that
	\eqref{eq:convergelimhydrolinear} holds for any	$\rho,t,x,y$ in a countable
	dense subset of their respective set of definition. Therefore,  for all
	$\omega \in \Omega_0 \cap \Omega_1$,
	any subsequential limit $S(\cdot,\cdot,\cdot;\omega)$ of
	$S_n(\cdot,\cdot,\cdot;\omega)$ satisfies
	that
	for any such
	$\rho,t,x,y$,
	\[
	S(0,t,f_{\rho};\omega)(x,y) =
	f_{\rho}(x,y) +t \, v(\rho).
	\]
	By continuity with respect to $(t,x,y)$ of both sides (by Proposition
	\ref{prop:proplimiteGW}), this holds actually for all $(x,y) \in \R^2$ and
	$t \in [0,T]$. Similarly, by continuity of $\rho \mapsto v(\rho)$ (defined
	in \eqref{eq:speedfunction}) and of $\rho \mapsto f_{\rho}$	 on $\R \times
	[-1,0]$
	(including the endpoints of the interval)
	for the
	topology of convergence on all compact sets
	and by continuity of $f \mapsto
	S(0,t,f;\omega)$ for the same topology (still by Proposition
	\ref{prop:proplimiteGW}) we
	deduce that it holds also for all $\rho \in \R \times [-1,0]$. Finally, we
	get the result for any $s>0$ by the semi-group property
	satisfied by $S$ (by Proposition \ref{prop:proplimiteGW}):
	\begin{align*}
	f_{\rho} +t \, v(\rho) &= S(0,t,f_{\rho};\omega) =
	S(s,t,S(0,s,f_{\rho},\omega);\omega) \\
	& = S(s,t,f_{\rho}+s \, v(\rho) ;\omega) = S(s,t,g_{\rho} ;\omega) +s \,
	v(\rho) \qquad \text{by translation
	invariance property}
	\end{align*}
	and thus  $S(s,t,f_{\rho} ;\omega) = f_{\rho} +(t-s) \,
	v(\rho)$.
\end{proof}

\begin{proof}[Proof of Proposition \ref{prop:limitelinearGW}]
  This proof requires the knowledge on equilibrium measures developed
  by Prähofer and Spohn in
  \cite{prahoferthesis,prahofer1997exactly}. As in section
  \ref{sec:equilibrium}, we note $\varphi_{M,N,\rho}$ the height
  function with asymptotic average slope
  $\rho \in \R \times (-1,0)$ (in the thermodynamic limit
    $N\to\infty,M\to\infty$) and whose gradients are stationary w.r.t
  time for the periodised Gates-Westcott dynamic (i.e the Poisson
  point process is periodised on a torus of size $2M$ and $2N$ and
  noted $[\omega]^{M,N}$ as in \eqref{eq:poissonperio}). There are two
  key ingredients in this proof: to show that, in the limit
  $M,N \to \infty$, $\varphi_{M,N,\rho}$ approaches $f_{\rho}$ in the
  sense of \eqref{eq:hypcondinitialGW} and that
  $n^{-1}h(0,0, nt;\varphi_{M,N,\rho},[\omega]^{M,N})$ approaches
  $f_{\rho}+t \, v(\rho)$. From \eqref{eq:moyequi} and
  \eqref{eq:speedequilibrum}, this is true on average. It remains to
  show concentration via variance estimates as in the next Lemmas.

\begin{lem}\label{lem:varh0}
	For any $\rho \in \R \times (-1,0)$ and $t \geq 0$,
	\begin{equation}
	\limsup_{N \to \infty} \limsup_{M \to \infty}  \Var \left(h\left(0,0,
	t;\varphi_{M,N,\rho},[\omega]^{M,N}\right)\right) = \underset{t \to \infty} \bigo
	\left( \log
	t \right).
	\end{equation}
\end{lem}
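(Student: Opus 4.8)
The plan is to combine a local conservation law for the volume under the interface with the equilibrium estimates \eqref{eq:logfluct} and \eqref{eq:varkinks}. Write $h_s := h(\cdot,\cdot,s;\varphi_{M,N,\rho},[\omega]^{M,N})$. By the time-stationarity \eqref{eq:stationary}, for every $(x,y)$ one has $h_s(x,y)-h_s(0,0)\overset{\mathrm{law}}{=}\varphi_{M,N,\rho}(x,y)$, hence $\Var\!\big(h_s(x,y)-h_s(0,0)\big)=\Var\!\big(\varphi_{M,N,\rho}(x,y)\big)$; likewise the number of kinks, resp. antikinks, of $h_s$ in any fixed box has the same joint law, for every $s$, as for $\varphi_{M,N,\rho}$. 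Fix $t\geq 1$, set $R:=\lceil t\rceil$, and take $M,N$ large enough that $\Lambda_R=[-R,R]\times\llbracket -R,R\rrbracket$ embeds in the torus without wrapping; put $|\Lambda_R|=2R(2R+1)\asymp R^2$.

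First I would establish the conservation law: for a fixed line $y$, the map $s\mapsto\int_{-R}^R h_s(x,y)\,\mathrm{d}x$ is continuous and its derivative, at almost every $s$, equals the number of kinks plus the number of antikinks of $h_s(\cdot,y)$ lying in $(-R,R)$. Indeed a kink moves to the right at unit speed with its higher side on the left, so it injects volume into $[-R,R]$ at unit rate as long as it sits in $(-R,R)$, and symmetrically for an antikink; the $+1$ jumps produced by creations are supported at single points and do not affect the $x$-integral, and a kink or antikink crossing $x=\pm R$ produces no discontinuity of the integral, hence no boundary term. Summing over $y\in\llbracket -R,R\rrbracket$ and integrating in $s$,
\[
I(t)-I(0)=\int_0^t\!\big(N^-_{\Lambda_R}(s)+N^+_{\Lambda_R}(s)\big)\,\mathrm{d}s,\qquad I(s):=\sum_{y=-R}^{R}\int_{-R}^{R} h_s(x,y)\,\mathrm{d}x,
\]
where $N^\pm_{\Lambda_R}(s)$ is the number of antikinks/kinks of $h_s$ in $\Lambda_R$, so $N^\pm_{\Lambda_R}(s)\overset{\mathrm{law}}{=}N^\pm_{M,N,\rho}(\Lambda_R)$ for all $s$.

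Next I would isolate $h_t(0,0)$. Writing $h_s(x,y)=h_s(0,0)+(h_s(x,y)-h_s(0,0))$ and using $\varphi_{M,N,\rho}(0,0)=0$ (so $I(0)=\sum_y\int_{-R}^R\varphi_{M,N,\rho}(x,y)\,\mathrm{d}x$), the identity above rearranges into
\[
|\Lambda_R|\,h_t(0,0)=\int_0^t\!\big(N^-_{\Lambda_R}(s)+N^+_{\Lambda_R}(s)\big)\mathrm{d}s+\sum_{y=-R}^{R}\int_{-R}^{R}\varphi_{M,N,\rho}(x,y)\,\mathrm{d}x-\sum_{y=-R}^{R}\int_{-R}^{R}\big(h_t(x,y)-h_t(0,0)\big)\mathrm{d}x.
\]
Applying the $L^2$ triangle inequality, then Minkowski's integral inequality together with time-stationarity to the first term, and the crude bound $\Var(\sum_y\int g_{x,y})^{1/2}\leq\sum_y\int\Var(g_{x,y})^{1/2}$ to the other two (with $g_{x,y}$ equal to $\varphi_{M,N,\rho}(x,y)$, resp. $h_t(x,y)-h_t(0,0)$, both of variance $\Var(\varphi_{M,N,\rho}(x,y))$), and Cauchy--Schwarz for the sum $N^-+N^+$, one gets
\[
\Var\!\big(h_t(0,0)\big)^{1/2}\leq\frac{t}{|\Lambda_R|}\,\Var\!\big(N^-_{M,N,\rho}(\Lambda_R)+N^+_{M,N,\rho}(\Lambda_R)\big)^{1/2}+2\sup_{|x|,|y|\leq R}\Var\!\big(\varphi_{M,N,\rho}(x,y)\big)^{1/2}.
\]

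Finally I would pass to the limit and optimise. Taking $\limsup_{M\to\infty}$ and then $\limsup_{N\to\infty}$, \eqref{eq:varkinks} makes the first variance $\bigo(R^2\log R)$ and \eqref{eq:logfluct} makes the supremum over the (now fixed) box $\Lambda_R$ equal to $\bigo(\log R)$, whence
\[
\limsup_{N\to\infty}\limsup_{M\to\infty}\Var\!\big(h(0,0,t;\varphi_{M,N,\rho},[\omega]^{M,N})\big)^{1/2}=\bigo\!\Big(\tfrac{t\sqrt{\log R}}{R}\Big)+\bigo\!\big(\sqrt{\log R}\big);
\]
choosing $R=\lceil t\rceil$ makes both terms $\bigo(\sqrt{\log t})$, and squaring yields the lemma. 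The main obstacles are the bookkeeping behind the conservation law — checking that there really is no flux at $x=\pm R$ and that the instantaneous creation jumps contribute nothing to the $x$-integral — and the balance between the two fluctuation mechanisms: the $\bigo(R^2\log R)$ variance of the kink/antikink counts (divided by $|\Lambda_R|^2\asymp R^4$ but multiplied by $t^2$) against the $\bigo(\log R)$ spatial fluctuation of the equilibrium profile. This competition is precisely what pins the optimal scale at $R\asymp t$ and turns the a priori power-law fluctuation of a single bare PNG line into a logarithmic one.
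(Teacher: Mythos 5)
Your proposal is correct and follows essentially the same route as the paper: the same volume conservation law over $\Lambda_R$ with $R\asymp t$, the same rearrangement isolating $|\Lambda_R|\,h_t(0,0)$, and the same inputs \eqref{eq:stationary}, \eqref{eq:logfluct} and \eqref{eq:varkinks}. The only difference is cosmetic — you work with standard deviations via Minkowski's inequality where the paper bounds variances of sums by constants times sums of variances.
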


\begin{lem}\label{lem:uniformequilibrium}
	For any $\rho \in \R \times (-1,0)$, any $\eps>0$, any $n \in \N^*$ and any compact set $K \subseteq \R^2$,
	\begin{equation}
		\limsup_{N \to \infty} \limsup_{M \to \infty} \Pro \left( \sup_{(x,y)
		\in
		K}\left|\frac{1}{n}\varphi_{M,N,\rho}(nx,\lfloor
	ny \rfloor) - f_{\rho}(x,y)\right| \geq \eps \right) = \underset{n \to
	\infty}{\bigo} \left( \frac{\log n}{n^2}\right).
	\end{equation}
\end{lem}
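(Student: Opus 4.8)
I would prove this by reducing the supremum over the compact set $K$ to a supremum over a finite, $n$-independent net, controlling $\varphi_{M,N,\rho}$ at the net points by a variance (Chebyshev) estimate and its oscillation between them by a kink/antikink count. Fix $R$ with $K\subseteq[-R,R]^2$, fix $\eps>0$ and $n$, and normalise $\varphi_{M,N,\rho}(0,0)=0$. In the $y$-direction the reduction is deterministic: by condition~2 of Definition~\ref{defi:statespace}, $m\mapsto\varphi_{M,N,\rho}(p,m)$ is non-increasing with steps in $\{-1,0\}$, so $\tfrac1n\varphi_{M,N,\rho}(nx,\lfloor ny\rfloor)$ is non-increasing in $y$ and $1$-Lipschitz up to an error $1/n$, while $f_\rho$ is Lipschitz; a mesh $\delta=\delta(\eps,\rho)$ in $y$ therefore costs only an additive error $O(\delta+1/n)$ and no probability. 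In the $x$-direction there is no monotonicity, so I take a net $x_0<\dots<x_p$, $y_0<\dots<y_q$ of mesh $\delta$ covering a neighbourhood of the projections of $K$, with $p,q=O(R/\delta)$ (a constant once $\delta$ is fixed, and $\lfloor ny_k\rfloor$ distinct for $n$ large). For $(x,y)$ in the cell $[x_j,x_{j+1}]\times[y_k,y_{k+1}]$ and $m_k:=\lfloor ny_k\rfloor$, the triangle inequality together with monotonicity in $y$ and the fact that along a line a height difference is at most the number of jumps crossed gives
\[
\Bigl|\tfrac1n\varphi_{M,N,\rho}(nx,\lfloor ny\rfloor)-f_\rho(x,y)\Bigr|
\le \Bigl|\tfrac1n\varphi_{M,N,\rho}(nx_j,m_k)-f_\rho(x_j,y_k)\Bigr|
+\tfrac1n\,\mathcal N_{m_k,j}
+(|\rho_1|+2)\,\delta+\tfrac1n ,
\]
where $\mathcal N_{m_k,j}$ is the number of kinks and antikinks of $\varphi_{M,N,\rho}(\cdot,m_k)$ in $(nx_j,nx_{j+1}]$. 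Choosing $\delta$ so that $(|\rho_1|+2)\delta\le\eps/4$, the event $\{\sup_K|\cdot|\ge\eps\}$ is contained, for $n$ large, in the union over the $O((R/\delta)^2)$ cells of the events $\{|\tfrac1n\varphi_{M,N,\rho}(nx_j,m_k)-f_\rho(x_j,y_k)|\ge\eps/4\}$ and $\{\tfrac1n\mathcal N_{m_k,j}\ge\eps/4\}$.

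It then remains to estimate these two families of events using the equilibrium facts of Section~\ref{sec:equilibrium}. For the height at a net point: by \eqref{eq:moyequi}, $\lim_N\lim_M\Esp[\varphi_{M,N,\rho}(nx_j,m_k)]=\rho_1 nx_j+\rho_2 m_k$, which after division by $n$ is within $1/n$ of $f_\rho(x_j,y_k)$, while by \eqref{eq:logfluct}, $\limsup_N\limsup_M\Var(\varphi_{M,N,\rho}(nx_j,m_k))=O(\log n)$ uniformly over the finitely many net points (their norm being $O(n)$); Chebyshev then yields $\limsup_N\limsup_M\Pro(|\tfrac1n\varphi_{M,N,\rho}(nx_j,m_k)-f_\rho(x_j,y_k)|\ge\eps/4)=O(\log n/n^2)$. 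For the count: since the law of $\varphi_{M,N,\rho}$ has (in the limit) translation invariant spatial differences, the density of kinks plus antikinks is constant along every line and equals the growth velocity $v(\rho)$, so $\lim_N\lim_M\Esp[\mathcal N_{m_k,j}]=v(\rho)\,n\delta$, which is $\le\eps n/8$ provided $\delta$ is also chosen with $v(\rho)\delta\le\eps/8$. Because the kink/antikink point process is determinantal (Prähofer and Spohn), the number of its points in a bounded region is, in distribution, a sum of independent Bernoulli variables, and thus obeys a Bernstein-type bound with variance at most its mean; hence $\limsup_N\limsup_M\Pro(\tfrac1n\mathcal N_{m_k,j}\ge\eps/4)\le e^{-c(\eps,\rho)\,n}$. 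Summing over the constantly many cells gives
\[
\limsup_{N}\limsup_{M}\Pro\!\left(\sup_{(x,y)\in K}\Bigl|\tfrac1n\varphi_{M,N,\rho}(nx,\lfloor ny\rfloor)-f_\rho(x,y)\Bigr|\ge\eps\right)
= O\!\left(\frac{\log n}{n^2}\right)+O\!\left(e^{-cn}\right)= O\!\left(\frac{\log n}{n^2}\right).
\]

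The main obstacle is the $x$-direction. Since the equilibrium height is not monotone in $x$, one cannot pin it down between net points by a deterministic Lipschitz argument as in the $y$-direction, and one is forced to bound the within-cell $x$-oscillation by the count $\mathcal N_{m_k,j}$, whose mean over an interval of length $n\delta$ is of order $n\delta$ — comparable to the threshold $\eps n$ unless $\delta$ is taken small. The delicate point is that applying Chebyshev to $\mathcal N_{m_k,j}$ (whose variance is of order $n$, even using the $1/r^2$ decay of kink/antikink correlations) only yields a probability $O(1/n)$, which is not summable; the summable bound genuinely requires the sub-Poissonian concentration coming from the determinantal structure of the equilibrium measure (equivalently, from the negative association of determinantal point processes). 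Keeping the net $n$-independent, so that the union bound over cells costs only a constant, while still extracting an $O(\log n/n^2)$ probability, is the balance the argument must strike; everything else (the $y$-monotonicity reduction, the bookkeeping of inclusions of light-rectangles is not even needed here, and Chebyshev at the net points) is routine.
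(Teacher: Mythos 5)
Your covering strategy, the $y$-monotonicity reduction, and the Chebyshev estimate at the net points via \eqref{eq:moyequi} and \eqref{eq:logfluct} all coincide with the paper's argument. The proposal diverges, however, exactly at the step you yourself identify as the crux: the control of the within-cell $x$-oscillation by the single-line count $\mathcal N_{m_k,j}$ of kinks and antikinks in $(nx_j,nx_{j+1}]\times\{m_k\}$. Your claim that this count ``is, in distribution, a sum of independent Bernoulli variables'' because the process is determinantal is not justified by anything established in the paper, and it is doubtful as stated. The Bernoulli-decomposition theorem for determinantal processes (and likewise negative association) requires the kernel to be Hermitian, locally trace class, with spectrum in $[0,1]$. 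The Gates--Westcott correlation kernel $S_{M,N,\rho}(x',y';x,y)$ written in Appendix~\ref{sec:kak_cor_eq} is an \emph{extended} (space-like/time-like) kernel, given by two different integral formulas according to the sign of $x'-x$, and is not Hermitian; moreover the quantity you need to concentrate is the \emph{superposition} $N^++N^-$ of the kink and antikink processes, which is not itself obviously a determinantal count even if each marginal were. Without that input, the only available tool is the second moment: the kink/antikink covariance decays like $1/r^2$ (equation \eqref{eq:bound_struct_func}), so the variance of the single-line count over a segment of length $n\delta$ is only $\bigo(n)$, and Chebyshev against the threshold $\eps n/4$ gives $\bigo(1/n)$ --- precisely the insufficiency the paper points out in the sentence preceding \eqref{eq:contrunif1}.

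The paper's way around this is elementary and worth noting: instead of fixing $Y=\lfloor ny_i\rfloor$, it \emph{averages} the cell estimate over all $Y\in\llbracket\lfloor n(y_i-\delta)\rfloor,\lfloor n(y_i+\delta)\rfloor\rrbracket$ (this is legitimate because the $y$-monotonicity lets one pass between lines at cost controlled by the slope bound). The single-line count is thereby replaced by the two-dimensional count $(N^++N^-)((nx_i,\lfloor ny_i\rfloor)+\Lambda_{n\delta+1})$ divided by $(2\delta n-1)n$; its variance is $\bigo(n^2\log n)$ by \eqref{eq:varkinks}, its mean is $\sim(2n\delta)^2 v(\rho)$, and Chebyshev then yields the $\bigo(\log n/n^2)$ rate directly, with the extra $2\delta v(\rho)$ contribution absorbed into the constant $C_\rho$ before choosing $\delta=\eps/(2C_\rho)$. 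If you wish to keep your route, you would have to actually prove a concentration inequality for the line count from the non-Hermitian determinantal structure (or establish that the restriction to a line is a Hermitian determinantal process), which is a substantial additional task; as written, this step is a genuine gap.
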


  Let us admit first these Lemmas and finish the proof of Proposition
  \ref{prop:limitelinearGW}. Let us fix $\rho,t,x,y$ as in Proposition
  \ref{prop:limitelinearGW}. We also fix $\eps>0$ and $n \in \N$. For any $M,N
  \in
  \R_+$,
\begin{equation}\label{eq:decompequilibre}
\begin{aligned}
\Pro (|S_{n}(0,t;f_{\rho},\omega)&(x,y) -
f_{\rho}(x,y) -
t v(\rho)| \geq 2 \eps ) \\
& \leq \Pro\left(S_{n}(0,t;f_{\rho},\omega)(x,y)
\neq
S_{n}(0,t;f_{\rho},[\omega]^{M,N})(x,y)  \right) && \rbrace
\mathrm{A} \\
& + \Pro \left(|n^{-1}h(nx,\lfloor ny
\rfloor, nt;\varphi_{M,N,\rho},[\omega]^{M,N}) - f_{\rho}(x,y) -
t v(\rho)| \geq \eps \right) && \rbrace \mathrm{B}  \\
& + \Pro \left(|S_{n}(0,t;f_{\rho},[\omega]^{M,N})(x,y) - n^{-1}h(nx,\lfloor ny
\rfloor, nt;\varphi_{M,N,\rho},[\omega]^{M,N})| \geq \eps \right). && \rbrace \mathrm{C}
\end{aligned}
\end{equation}
Let us bound the limsup when $M,N$ goes to infinity of the three terms of the
r.h.s called A, B and C.

\medskip

\noindent A) The first term is easy to control thanks to the linear propagation
of
	information. For any $M,N$ large enough,
	$[-M,M) \times \llbracket -N , N -1 \rrbracket$ contains $\mathcal{B}((nx,ny),
	\alpha_{M,N} \, nt)$ with $\alpha_{M,N}:=(M\wedge N)/(2nt)$. For such
	$M,N$, if
	$\omega
	\in A_{n(x,y),0,nt,\alpha_{M,N}}$
	(defined in \eqref{eq:defA}), then
	$S_{n}(0,t;f_{\rho},\omega)(x,y) =
	S_{n}(0,t;f_{\rho},[\omega]^{M,N})(x,y)$. Consequently, by
	\eqref{eq:unionlocal} and since $\alpha_{M,N}$ tends to infinity when $M,N$
	tend to infinity,
	\begin{equation}\label{eq:contr1}
	\Pro \left(
	S_{n}(0,t;f_{\rho},\omega)(x,y) \neq
	S_{n}(0,t;f_{\rho},[\omega]^{M,N})(x,y) \right) \leq \Pro \left(
	^c\!A_{n(x,y),0,nt,\alpha_{M,N}} \right) \underset{M,N \to
	\infty}{\longrightarrow}0.
	\end{equation}

	\medskip

\noindent B)
	 Let us write $h(nx,\lfloor ny
	\rfloor, nt;\varphi_{M,N,\rho})$ for $h(nx,\lfloor ny
	\rfloor, nt;\varphi_{M,N,\rho},[\omega]^{M,N})$. By Chebyshev's inequality,
	\begin{align*}
	&\Pro ( |n^{-1}h(nx,\lfloor ny
	\rfloor, nt;\varphi_{M,N,\rho}) - f_{\rho}(x,y) -
	t v(\rho)| \geq \eps ) \\
	& \leq \eps^{-2} \, \Esp \left[
	|n^{-1}h(nx,\lfloor ny
	\rfloor, nt;\varphi_{M,N,\rho}) - f_{\rho}(x,y) -
	t v(\rho)|^2 \right] \\
	& = \eps^{-2} \left( \Esp \left[ n^{-1}h(nx,\lfloor ny
	\rfloor, nt;\varphi_{M,N,\rho}) \right] - f_{\rho}(x,y) -
	t v(\rho) \right)^2  + \eps^{-2} \frac{\Var \left(h(nx,\lfloor ny
	\rfloor, nt;\varphi_{M,N,\rho}) \right)}{n^2}.
	\end{align*}
	By \eqref{eq:speedequilibrum}, the first term of the r.h.s in the last
	equality goes to zero when $M,N$ tends to infinity. To treat the second term, we write $h(nx,\lfloor ny
		\rfloor, nt;\varphi_{M,N,\rho}) = h(0, 0
			, nt;\varphi_{M,N,\rho}) +  h(nx,\lfloor ny
				\rfloor, nt;\varphi_{M,N,\rho})-h(0, 0
				, nt;\varphi_{M,N,\rho})$ and use that the variance of the sum is smaller than twice the
	sum of the variances:
	\begin{align*}
	\Var (h(nx,\lfloor ny &
	\rfloor, nt;\varphi_{M,N,\rho}) ) \\
	&\leq 2 \,\Var
	\left(h(0, 0
	, nt;\varphi_{M,N,\rho}) \right) + 2\,\Var
	\left(h(nx,\lfloor ny
	\rfloor, nt;\varphi_{M,N,\rho})-h(0, 0
	, nt;\varphi_{M,N,\rho}) \right)  \\
	& = 2 \, \Var
	\left(h(0, 0
	, nt;\varphi_{M,N,\rho}) \right) + 2\, \Var
	\left(\varphi_{M,N,\rho}(nx,\lfloor ny
	\rfloor) \right). \qquad \text{by \eqref{eq:stationary}}
	\end{align*}
	The first term of the r.h.s is controlled by Lemma \ref{lem:varh0} and the
	second by \eqref{eq:logfluct}. Therefore,
	\begin{equation}\label{eq:contr2}
	\limsup_{N \to \infty} \limsup_{M \to \infty} \Pro \left(
	\left|n^{-1}h(nx,\lfloor ny
	\rfloor, nt;\varphi_{M,N,\rho}) - f_{\rho}(x,y) -
	t v(\rho)\right| \geq \eps \right) = \underset{n \to \infty}{\bigo}\left( \frac{\log
	n}{n^2} \right).
	\end{equation}
	\medskip

\noindent C)
	 By Lemma \ref{lem:locality} and  by \eqref{eq:approxphiGW}, for any
	$n \geq 2/\eps$
	\begin{align*}
	 \Pro &\left( |S_{n}(0,t;f_{\rho},[\omega]^{M,N})(x,y) - n^{-1}h(nx,\lfloor
	ny
	\rfloor, nt;\varphi_{M,N,\rho},[\omega]^{M,N})| \geq \eps \right) \\
	& \leq \Pro \left( \sup_{(x',y') \in \mathcal{B}((x,y),\alpha
	t)}\left|f_{\rho}(x',y') -
	\frac{1}{n}\varphi_{M,N,\rho}(nx',\lfloor
	ny' \rfloor)\right| \geq \eps/2 \right) + \underset{n \to
	\infty}{\bigo}\left(
	e^{-\gamma
	\, n} \right).
	\end{align*}
	Consequently, by Lemma \ref{lem:uniformequilibrium},
	\begin{equation}\label{eq:contr3}
	\limsup_{N \to \infty} \limsup_{M \to \infty} \Pro \left(
	|S_{n}(0,t;f_{\rho},[\omega]^{M,N})(x,y) - n^{-1}h(nx,\lfloor
	ny
	\rfloor, nt;\varphi_{M,N,\rho},[\omega]^{M,N})| \geq \eps \right) =
	\underset{n \to \infty}{\bigo}\left( \frac{\log
		n}{n^2} \right).
	\end{equation}
Altogether, by taking the limsup when $M,N$ goes to infinity in
\eqref{eq:decompequilibre} and by \eqref{eq:contr1}, \eqref{eq:contr2} and
\eqref{eq:contr3},
\[
\Pro \left( |S_{n}(0,t;f_{\rho},\omega)(x,y) -
f_{\rho}(x,y) -
t v(\rho)| \geq 2\eps \right) = \underset{n \to \infty}{\bigo}\left( \frac{\log
	n}{n^2} \right),
\]
and the proof of Proposition \ref{prop:limitelinearGW} follows from
Borel-Cantelli Lemma.
\end{proof}

Now, as promised, we prove Lemmas \ref{lem:varh0} and
\ref{lem:uniformequilibrium}.

\begin{proof}[Proof of Lemma \ref{lem:varh0}]
	Again we write $h(x,y,t;\varphi_{M,N,\rho})$ instead of
	$h(x,y,t;\varphi_{M,N,\rho},[\omega]^{M,N})$. For any rectangle
	$\Lambda_R= [-R,R] \times \llbracket -R , R
	\rrbracket$ with $R>0$ and any $t
	\geq 0$ if we define
	\[
	h(\Lambda_R,t;\varphi_{M,N,\rho}) := \sum_{y=-R}^R \int_{-R}^R
	h(x,y,t;\varphi_{M,N,\rho}) \d{x},
	\]
	then it is easy to see
	\begin{equation}
	h(\Lambda_R,t;\varphi_{M,N,\rho}) - h(\Lambda_R,0;\varphi_{M,N,\rho})  =
	\int_0^t
	\left(N^+_{M,N,\rho}(\Lambda_R,s)+ N^-_{M,N,\rho}(\Lambda_R,s)\right)
	\d{s},
	\end{equation}
	where $N^{\pm}_{M,N,\rho}(\Lambda_R,s)$ is the number of antikinks/kinks in the
	domain $\Lambda_R$ at time $s$ for the dynamic starting from
	$\varphi_{M,N,\rho}$. Then,
	\begin{align*}
	\Var &\left( h(\Lambda_R,t;\varphi_{M,N,\rho})  -
	h(\Lambda_R,0;\varphi_{M,N,\rho}) \right) \\
	&  = \int_0^t \int_0^t
	\Cov \left((N^{+}+N^{-})_{M,N,\rho}(\Lambda_R,s),
	(N^{+}+N^{-})_{M,N,\rho}(\Lambda_R,s') \right) \d{s} \d{s'} \\
	& \leq t^2 \, \Var \left( (N^{+}+N^{-})_{M,N,\rho}(\Lambda_R,0)
	\right) \leq 2 t^2 \, \left(\Var \left( N^{+}_{M,N,\rho}(\Lambda_R,0)
	\right) + \Var \left( N^{-}_{M,N,\rho}(\Lambda_R,0)
	\right) \right),
	\end{align*}
	where the two last inequalities hold by Cauchy-Schwarz inequality and by
	stationarity with respect to time.
	Therefore,  by \eqref{eq:varkinks} applied for $R=t$,
	\begin{equation}\label{eq:varhlambda}
	\limsup_{N \to \infty} \limsup_{M \to \infty} \Var \left(
	h(\Lambda_t,t;\varphi_{M,N,\rho})  -
	h(\Lambda_t,0;\varphi_{M,N,\rho}) \right) =\underset{R \to
		\infty}{\bigo} \left(t^4 \log t\right).
	\end{equation}
	Now we are going to compare $h(\Lambda_t,t;\varphi_{M,N,\rho})  -
	h(\Lambda_t,0;\varphi_{M,N,\rho})$ with
	$t^2 \,
	h(0,0,t;\varphi_{M,N,\rho})$, using the    logarithmic bound
	\eqref{eq:logfluct} on fluctuations. We can write
	\begin{equation}\label{eq:comparhlambda}
	\begin{aligned}
	2t(2\lfloor t \rfloor +1)
	h(0,0,t;\varphi_{M,N,\rho})	& = \sum_{y=- \lfloor t \rfloor}^{\lfloor t
	\rfloor} \int_{-t}^t
	(h(0,0,t;\varphi_{M,N,\rho}) - h(x,y,t;\varphi_{M,N,\rho})) \d{x} \\
	& \hspace{0.5cm} + \sum_{y=- \lfloor t \rfloor}^{\lfloor t \rfloor}
	\int_{-t}^t
	\varphi_{M,N,\rho}(x,y)\d{x} +
	h(\Lambda_t,t;\varphi_{M,N,\rho})-h(\Lambda_t,0;\varphi_{M,N,\rho}).
	\end{aligned}
	\end{equation}
	By Cauchy-Schwarz inequality and by stationarity \eqref{eq:stationary}, for any
	$(x,y),(x',y') \in \Lambda_t$,
	\begin{align*}
	\limsup_{N \to \infty} \limsup_{M \to \infty}
	\Cov&(h(0,0,t;\varphi_{M,N,\rho}) -
	h(x,y,t;\varphi_{M,N,\rho}),h(0,0,t;\varphi_{M,N,\rho}) -
	h(x',y',t;\varphi_{M,N,\rho}))\\
	& \leq \limsup_{N \to \infty} \limsup_{M \to \infty} \sqrt{\Var \left(
	\varphi_{M,N,\rho}(x,y)\right)} \, \sqrt{\Var \left(
	\varphi_{M,N,\rho}(x',y')\right)}  \\
	&= \underset{t \to \infty}{\bigo}(\log t), \qquad \text{by \eqref{eq:logfluct}}
	\end{align*}
	and thus
	\[
	\limsup_{N \to \infty} \limsup_{M \to \infty} \Var \left( \sum_{y=- \lfloor
	t
		\rfloor}^{\lfloor t \rfloor} \int_{-t}^t
	(h(0,0,t;\varphi_{M,N,\rho}) - h(x,y,t;\varphi_{M,N,\rho})) \d{x} \right) =
	\underset{t \to \infty}{\bigo}(t^4\log t).
	\]
	By the same argument, we get
		\[
	\limsup_{N \to \infty} \limsup_{M \to \infty} \Var \left( \sum_{y=- \lfloor
		t
		\rfloor}^{\lfloor t \rfloor} \int_{-t}^t
	\varphi_{M,N,\rho}(x,y) \d{x} \right) =
	\underset{t \to \infty}{\bigo}(t^4\log t).
	\]
	Therefore, using \eqref{eq:comparhlambda}, \eqref{eq:varhlambda} and that
 	the variance of the sum of three terms is less than three times the sum of
 	the variances,
	\begin{align*}
	(2t(2\lfloor t \rfloor +1))^2 \limsup_{N\to \infty} \limsup_{M \to \infty}
	\Var
	\left(
	h(0,0,t;\varphi_{M,N,\rho}) \right) = \underset{t \to
	\infty}{\bigo}(t^4\log t),
	\end{align*}
	which concludes the proof of the Lemma.
\end{proof}

\begin{proof}[Proof of Lemma \ref{lem:uniformequilibrium}]
	Since $K$ is compact, for any $\delta>0$, we can cover $K$ by a finite
	number $l_{\delta}\in \N$ of balls $\mathcal{B}((x_i,y_i),\delta)_{1 \leq i
	\leq l_{\delta}}$. Fix
	$i \in \llbracket 1 , l_{\delta}
	\rrbracket$ and $(x,y) \in \mathcal{B}((x_i,y_i),\delta)$. For all $Y \in
	\llbracket\lfloor n(y_i-\delta) \rfloor, \lfloor n(y_i+\delta) \rfloor
	\rrbracket$,
	\begin{align*}
	&\left|f_{\rho}(x,y) -
	\frac{1}{n}\varphi_{M,N,\rho}(nx,\lfloor
	ny \rfloor)\right| \leq \left|f_{\rho}(x,y)
	-f_{\rho}(x_i,y_i)\right|
	+ \left|f_{\rho}(x_i,y_i) -
	\frac{1}{n}\varphi_{M,N,\rho}(nx_i,\lfloor
	ny_i \rfloor)\right| \\
	& \hspace{1.5cm} + \left|\frac{1}{n}\varphi_{M,N,\rho}(nx_i,\lfloor
	ny_i \rfloor)
	-\frac{1}{n}\varphi_{M,N,\rho}(nx_i,Y)\right| +
	\left|\frac{1}{n}\varphi_{M,N,\rho}(nx_i,Y)
	-\frac{1}{n}\varphi_{M,N,\rho}(nx,Y)\right| \\
	& \hspace{1.5cm}+
	\left|\frac{1}{n}\varphi_{M,N,\rho}(nx,Y)
	-\frac{1}{n}\varphi_{M,N,\rho}(nx,\lfloor ny \rfloor)\right|\\
	& \leq (|\rho_1|+|\rho_2|+3) \, \delta + \left|f_{\rho}(x_i,y_i) -
	\frac{1}{n}\varphi_{M,N,\rho}(nx_i,\lfloor
	ny_i \rfloor)\right| + \frac{1}{n}
	\left|(N^+_{M,N,\rho}-N^-_{M,N,\rho})(nI_{x,x_i}\times\{Y\})\right|\\
	& \leq (|\rho_1|+|\rho_2|+3) \, \delta + \left|f_{\rho}(x_i,y_i) -
	\frac{1}{n}\varphi_{M,N,\rho}(nx_i,\lfloor
	ny_i \rfloor)\right| + \frac{1}{n}
	(N^+_{M,N,\rho}+N^-_{M,N,\rho})(n[x_i-\delta,x_i+\delta]\times\{Y\}),
	\end{align*}
	where $\rho = (\rho_1,\rho_2)$, $N^{\pm}_{M,N,\rho}(D)$ is the number of antikinks/kinks of
	$\varphi_{M,N,\rho}$ in
	a domain $D$ and $I_{x,x_i} = [x\wedge x_i, x \vee x_i]$ (the second
	inequality  holds
         because the height slope in the  $y$ direction is bounded by $1$). One
         could
	simply choose
	$Y=\lfloor n y_i \rfloor$ in the last inequality and try to control the
	variance of
	$(N^+_{M,N,\rho}+N^-_{M,N,\rho})(n[x_i-\delta,x_i+\delta]\times\{\lfloor n
	y_i \rfloor\})$ for large $n$ (after sending $M,N$ to infinity) but it
	is not obvious to get a bound better than $\bigo(n)$ (which is
	insufficient).
	Instead, we	average the last
	inequality for all possible values of $Y$ in $
	\llbracket\lfloor n(y_i-\delta) \rfloor, \lfloor n(y_i+\delta) \rfloor
	\rrbracket$ in order to get
		\begin{equation}\label{eq:contrunif1}
	\begin{aligned}
	\sup_{(x,y) \in \mathcal{B}((x_i,y_i),\delta)} \left|f_{\rho}(x,y) -
	\frac{1}{n}\varphi_{M,N,\rho}(nx,\lfloor
	ny \rfloor)\right|& \leq (|\rho_1| + |\rho_2|+3) \, \delta +
	\left|f_{\rho}(x_i,y_i) -
	\frac{1}{n}\varphi_{M,N,\rho}(nx_i,\lfloor
	ny_i \rfloor)\right|\\
& 	+ \frac{1}{(2\delta n -1)n}
	\left(N^+_{M,N,\rho} + N^-_{M,N,\rho}\right) ((nx_i,\lfloor ny_i
	\rfloor)+\Lambda_{n\delta+1}),
	\end{aligned}
	\end{equation}
	where $\Lambda_{n\delta+1}$ is the rectangle defined as at the beginning of the proof of Lemma \ref{lem:varh0}. Now, we know
	from \eqref{eq:moyequi}, from \eqref{eq:logfluct} and from
	Bienaymé–Chebyshev inequality that
	\begin{equation}\label{eq:contrunif2}
	\limsup_{N \to \infty} \limsup_{M \to \infty} \Pro \left(
	\left|f_{\rho}(x_i,y_i)
	-
	\frac{1}{n}\varphi_{M,N,\rho}(nx_i,\lfloor
	ny_i \rfloor)\right| \geq \eps/4 \right) = \underset{n \to
	\infty}{\bigo}\left( \frac{\log
		n}{n^2} \right).
	\end{equation}
	Moreover, by \eqref{eq:varkinks} and by invariance by
        translation of the stationary measures,
	\[
	\lim_{N \to \infty} \lim_{M \to \infty} \Var \left(
	(N_{M,N,\rho}^++N_{M,N,\rho}^-)((nx_i,\lfloor ny_i\rfloor
	)+\Lambda_{n\delta+1}) \right) =
	\underset{n \to
		\infty}{\bigo}(n^2\log n).
            \]
            Besides, since the sum of the asymptotic kink and antikink
            densities is equal to the average speed~$v(\rho)$,
	\[
	\lim_{N \to \infty} \lim_{M \to \infty} \Esp \left[
	\left( N_{M,N,\rho}^++N_{M,N,\rho}^-\right)((nx_i,\lfloor ny_i\rfloor
	)+\Lambda_{n\delta+1}) \right] \underset{n \to \infty}{\sim} (2n\delta)^2
	\, v(\rho).
	\]
	Note that the two previous limits exist as explained in Appendix \ref{sec:kak_cor_eq}.
	Dividing by $(2\delta n -1)n$ and using Bienaymé–Chebyshev inequality yields
	\begin{equation}\label{eq:contrunif3}
	\limsup_{N \to \infty} \limsup_{M \to \infty} \Pro \left(
	\frac{1}{(2\delta n
	-1)n}(N_{M,N,\rho}^{+}+N_{M,N,\rho}^-)\left((nx_i,\lfloor ny_i\rfloor
	)+\Lambda_{n\delta+1} \right) \geq 2\delta v(\rho) + \eps/4 \right) =
	\underset{n
	\to
		\infty}{\bigo}\left( \frac{\log
		n}{n^2} \right).
	\end{equation}
	From \eqref{eq:contrunif1}, \eqref{eq:contrunif2} and
	\eqref{eq:contrunif3},  we get that for any $\delta>0$,
	\begin{align*}
	& \limsup_{N \to \infty} \limsup_{M \to \infty} \Pro \left( \sup_{(x,y) \in
K} \left|f_{\rho}(x,y) -
	\frac{1}{n}\varphi_{M,N,\rho}(nx,\lfloor
	ny \rfloor)\right| \geq C_{\rho} \delta +  \eps/2 \right)\\
	& \leq \sum_{i=1}^{l_{\delta}} \limsup_{N \to \infty} \limsup_{M \to
	\infty} \Pro
	\left( \sup_{(x,y) \in
		\mathcal{B}((x_i,y_i),\delta)} \left|f_{\rho}(x,y) -
	\frac{1}{n}\varphi_{M,N,\rho}(nx,\lfloor
	ny \rfloor)\right| \geq C_{\rho} \delta +  \eps/2 \right)  = \underset{n
	\to
		\infty}{\bigo}\left( \frac{\log
		n}{n^2} \right),
	\end{align*}
	with $C_{\rho} := |\rho_1| + |\rho_2| + 3 + 2 v(\rho)$ which concludes the
	proof by setting $\delta = \eps/(2C_{\rho})$.
      \end{proof}

\subsection{Conclusion of the proof of Theorem \ref{theo:principalGW}}
Propositions \ref{prop:compacity} (compactness) and Proposition
\ref{prop:proplimiteGW} together with Corollary
\ref{coro:complinearGW} provide all necessary ingredients to conclude the
proof of Theorem \ref{theo:principalGW}.

\begin{prop}\label{prop:conclGW}
	For all $\omega \in \Omega_0 \cap \Omega_1$, all $f \in \bar{\Gamma}$ and
	all $R,T>0$,
	\begin{equation}\label{eq:convergenceS_n}
	\sup_{|x|,|y| \leq R, t \in [0,T]}|S_{n}(0,t;f,\omega)(x,y)-
	u(x,y,t)| \underset{n \to \infty}{\longrightarrow} 0,
	\end{equation}
	where $u$ is the unique viscosity solution of \eqref{eq:hamilton-jacobiGW}.
\end{prop}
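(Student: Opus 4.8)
The plan is to assemble the pieces already established — compactness (Proposition~\ref{prop:compacity}), identification of the subsequential limits (Proposition~\ref{prop:proplimiteGW} and Corollary~\ref{coro:complinearGW}), the sufficient conditions of Proposition~\ref{prop:condaxiom}, and the uniqueness Theorem~\ref{theo:unique} — and then to promote subsequential convergence to convergence of the full sequence by a routine sub-subsequence argument.

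First I would fix $\omega\in\Omega_0\cap\Omega_1$ and $f\in\bar{\Gamma}$, and prove the following intermediate claim: along \emph{every} subsequence $(n_k)_{k\in\N}$ there is a sub-subsequence $(n_{k_l})_{l\in\N}$ such that, for every $R>0$, $\sup_{|x|,|y|\le R,\ t\in[0,T]}|S_{n_{k_l}}(0,t;f,\omega)(x,y)-u(x,y,t)|\to 0$ as $l\to\infty$. To get this, I would use Proposition~\ref{prop:compacity} to extract $(n_{k_l})_l$ along which $S_{n_{k_l}}(\cdot,\cdot;f,\omega)$ converges in $(\F_{\T},D_\infty)$ to some limit $S(\cdot,\cdot;f,\omega)$. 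By Proposition~\ref{prop:proplimiteGW}, this limit is a family of continuous maps from $\bar{\Gamma}$ into itself satisfying the translation invariance, monotonicity, locality and semi-group properties of Proposition~\ref{prop:condaxiom} — in particular $S(0,0;f,\omega)=f$ — and $(s,t,x,y)\mapsto S(s,t,f;\omega)(x,y)$ is continuous; by Corollary~\ref{coro:complinearGW} it also satisfies the compatibility with linear solutions. Proposition~\ref{prop:condaxiom} then applies and shows that $(x,t)\mapsto S(0,t;f,\omega)(x)$ is a viscosity solution of \eqref{eq:hamilton-jacobiGW} on $\R^2\times[0,T]$ (this is where existence of a solution is actually obtained). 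Since $v$ in \eqref{eq:speedfunction} is globally Lipschitz, Theorem~\ref{theo:unique} forces $S(0,t;f,\omega)(x,y)=u(x,y,t)$ for all $(x,y)\in\R^2$ and $t\in[0,T]$. Finally, unwinding the definition of $D_\infty$ and restricting the convergence $S_{n_{k_l}}\to S$ in $\F_{\T}$ to $s=0$ yields the claimed uniform-on-compacts convergence along the sub-subsequence.

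Then I would conclude by contradiction. If \eqref{eq:convergenceS_n} failed for some $R,T>0$, there would exist $\eps>0$ and a subsequence $(n_k)_{k}$ along which $\sup_{|x|,|y|\le R,\ t\in[0,T]}|S_{n_k}(0,t;f,\omega)(x,y)-u(x,y,t)|\ge \eps$ for all $k$; applying the intermediate claim to this particular subsequence produces a sub-subsequence along which the same supremum tends to $0$, a contradiction. Hence \eqref{eq:convergenceS_n} holds for all $\omega\in\Omega_0\cap\Omega_1$ and all $f\in\bar{\Gamma}$.

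I do not expect a genuine obstacle: all of the real work has been carried out in the compactness and identification sections. The only point worth stressing is the logical structure — precisely because the viscosity solution of \eqref{eq:hamilton-jacobiGW} is unique, every sub-subsequential limit of $(S_n(0,\cdot;f,\omega))_n$ is forced to coincide with the \emph{same} function $u$, independently of the extracted subsequence, and it is exactly this fact that upgrades the relative compactness of Proposition~\ref{prop:compacity} (together with the subsequential identification) into convergence of the whole sequence.
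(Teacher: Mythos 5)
Your proposal is correct and follows essentially the same route as the paper: extract a convergent sub-subsequence via Proposition~\ref{prop:compacity}, identify its limit as the unique viscosity solution $u$ using Propositions~\ref{prop:proplimiteGW}, \ref{prop:condaxiom}, Corollary~\ref{coro:complinearGW} and Theorem~\ref{theo:unique}, and conclude by the standard sub-subsequence/contradiction argument. The only difference is cosmetic — the paper opens with the contradiction hypothesis before extracting, while you state the intermediate claim first — and your explicit emphasis on uniqueness being what upgrades subsequential convergence to full convergence matches the paper's intent exactly.
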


\begin{proof}
	Assume that convergence \eqref{eq:convergenceS_n}
	does not hold for some $\omega \in \Omega_0 \cap \Omega_1$, $f
	\in \bar{\Gamma}$ and $R,T>0$. Then, there exists $\eps>0$ and a subsequence
	$(n_k)_{k \in \N}$
	such that
	\begin{equation}\label{eq:contradiconvergence}
	\sup_{|x|,|y| \leq R, t \in
		[0,T]}|S_{n_{k}}(0,t;f,\omega)(x,y)-
	u(x,y,t)| \geq \eps.
	\end{equation}
	By Proposition \ref{prop:compacity}, we can extract
	another subsequence $(n_{k_l})_{l \in \N}$ such that for all $g \in
	\bar{\Gamma}$, the sequence $(S_{n_{k_l}}(\cdot,\cdot;g,\omega))_{l \in \N}$ converges
	towards a certain $S(\cdot,\cdot;g,\omega)$ in $\F{_\T}$. By Proposition
	\ref{prop:proplimiteGW} and Corollary
	\ref{coro:complinearGW}, $(S(s,t,\cdot;\omega))_{0\leq s \leq t \leq T}$
	satisfies all sufficient conditions of Proposition \ref{prop:condaxiom}.
	Therefore, $(x,y,t)\mapsto S(0,t,f;\omega)(x,y) = u(x,y,t)$ is the unique
	viscosity solution of \eqref{eq:hypcondinitialGW} and thus $(x,y,t) \mapsto
	S_{n_{k_l}}(0,t;g,\omega)(x,y)$ converges on all compact sets of $\R^2
	\times [0,T]$ towards $u$ when $l$ goes to infinity which is a
	contradiction with
	\eqref{eq:contradiconvergence}.
\end{proof}

The full proof of
Theorem \ref{theo:principalGW} follows from Proposition \ref{prop:conclGW} and
the fact that, by locality (Corollary~\ref{coro:asymlocal}),
\begin{equation}
\sup_{|x|,|y| \leq R, t \in
		[0,T]} \left|S_{n}(0,t;f,\omega)(x,y)-
	\frac{1}{n} h(n\cdot,\lfloor n\cdot \rfloor,nt,\varphi_n;\omega)\right|  \underset{n \to \infty}{\longrightarrow} 0,
\end{equation}
since both rescaled
initial
height functions
$n^{-1}\varphi_n^f(n\cdot, \lfloor n \cdot \rfloor)$ and
$n^{-1}\varphi_n(n\cdot, \lfloor n \cdot \rfloor)$ converges to
$f$ uniformly on $[-R-\alpha T,R+ \alpha
T]^2$ by
\eqref{eq:approxphiGW} and assumption \eqref{eq:hypcondinitialGW}.

%-----------------------------------------------------------
\appendix

\section{Sufficient conditions for viscosity solutions of Hamilton-Jacobi equations}\label{sec:sufcond}
In this section, we give a self-contained proof of Proposition
\ref{prop:condaxiom} which is inspired from \cite[Lemma
5.3]{rezakhanlou2001continuum}
and \cite[Proposition
7.1]{zhang2018domino}.
\begin{proof}
Let us show that $u : (x,t) \mapsto S(0,t,f)(x)$ defined from $\R^2 \times [0,T]$ to $\R$ is a viscosity solution of \eqref{eq:hamilton-jacobiGW}. First of all, by assumption, $u$ in continuous on $\R^d \times [0,T]$. Then, by the Semi-group property:
\[
u(\cdot,0) = S(0,0,g) = g.
\]
We are left to show that $u$ is a subsolution (the proof that $u$ is a
supersolution being identical). Let $\phi \in \mathcal{C}^{\infty}(\R^d
\times(0,T))$ and $(x_0,t_0) \in \R^d \times (0,T)$ such that $\phi(x_0,t_0) =
u(x_0,t_0)$ and $\phi \geq u$ on a neighbourhood of $(x_0,t_0)$.
At first, we introduce the following affine approximation of~$\phi$
around~$x_0$:
\[
\psi(x,t) := \phi(x_0,t) + \nabla \phi(x_0,t_0).(x-x_0).
\]
As $\psi$ and $\phi$ have the same value and derivatives at $(x_0,t_0)$, it is
enough to show that
\begin{equation}\label{eq:eqpsi}
\partial_t \psi(x_0,t_0) \leq v(\nabla \psi(x_0,t_0)),
\end{equation}
by studying $\psi(x_0,t_0) - \psi(x_0,t_0-\delta)$ for small positive $\delta$.

On the one hand, by the semi-group property and the definition of $u$,
\begin{equation}\label{eq:semigrouppsy}
\psi(x_0,t_0) = u(x_0,t_0) = S(t_0-\delta,t_0,u(\cdot,t_0-\delta))(x_0).
\end{equation}

On the other hand, it is easy to show that $\nabla \phi(x_0,t_0)\in \R \times
[-1,0]$ thanks to the assumptions~$\phi \geq u$ around $(x_0,t_0)$ with
equality at $(x_0,t_0)$ and the slopes constraints satisfied by functions in
$\bar{\Gamma}$
such as $u(\cdot,t_0)$. Therefore, by compatibility
with linear
solutions and translation
invariance,
\begin{equation}\label{eq:Spsi}
S(t_0-\delta,t_0,\psi(.,t_0-\delta))(x_0) = \psi(x_0,t_0-\delta) + \delta \,
v(\nabla \psi(x_0,t_0)).
\end{equation}

We are left to compare $S(t_0-\delta,t_0,\psi(.,t_0-\delta))(x_0)$ with
$S(t_0-\delta,t_0,u(\cdot,t_0-\delta))(x_0)$. Thanks to locality and monotony,
this can be done by comparing $\psi(.,t_0-\delta)$ with $u(\cdot,t_0-\delta)$
in
the ball~$\mathcal{B}(x_0,\alpha \, \delta)$. By Taylor expansion of $\phi$ and
$\psi$ at order $2$ around $(x_0,t_0)$,
\begin{align*}
\phi(x,t) & = \psi(x,t) + \mathrm{O}\left( \|x-x_0\|_{\infty}^2 + |t-t_0|^2
\right).
\end{align*}
Moreover, $u \leq \phi$ on a neighbourhood of $(x_0,t_0)$ hence
$u(\cdot,t_0-\delta) \leq \phi(\cdot,t_0-\delta)$ on $\mathcal{B}(x_0,\alpha \,
\delta)$ for $\delta$ small enough. Altogether, there exists $C>0$ such that
for
all $\delta$ small enough,
\begin{equation}\label{eq:condaxiompsi}
\forall x \in \mathcal{B}(x_0,\alpha \, \delta) \qquad u(x,t_0-\delta) \leq
\psi(x,t_0-\delta) + C \, \delta^2.
\end{equation}

Now, we set~$g :=
u(.,t_0-\delta)
\wedge \psi(.,t_0-\delta)$. By locality property (applied at $x_0$ with $R =0$),
\begin{equation}\label{eq:compareS}
 |S(t_0-\delta,t_0,u(.,t_0-\delta))(x_0) - S(t_0-\delta,t_0,g)(x_0)| \leq
 \sup_{x \in \mathcal{B}(x_0,\alpha \, \delta)} |u(x,t_0-\delta) - g(x)| \leq C
 \delta^2,
\end{equation}
where the last inequality holds because of \eqref{eq:condaxiompsi}. Since
$\psi \geq g$,
\begin{flalign*}
S(t_0-\delta,t_0,\psi(.,t_0-\delta))(x_0) & \geq S(t_0-\delta,t_0,g)(x_0) &&
\text{by monotonicity} \\
& \geq S(t_0-\delta,t_0,u(.,t_0-\delta))(x_0) - C \delta^2 && \text{by
\eqref{eq:compareS}} \\
&= \psi(x_0,t_0) - C \delta^2. && \text{by \eqref{eq:semigrouppsy}}
\end{flalign*}
Using \eqref{eq:Spsi}, we finally get
\begin{flalign*}
&& \psi(x_0,t_0-\delta) + \delta v(\nabla \psi(x_0,t_0)) \geq \psi(x_0,t_0) - C
\delta^2 && \ .&&
\end{flalign*}
and then
\[
\partial_t \psi(x_0,t_0) = \lim_{\delta \to 0} \frac{\psi(x_0,t_0) -
\psi(x_0,t_0-\delta)}{\delta} \leq  v(\nabla \psi(x_0,t_0)).
\]

\end{proof}

\section{Stationary kink/antikink correlations and proof of equation \eqref{eq:varkinks}}\label{sec:kak_cor_eq}

In this section, we give more details about the determinantal structure of the stationary measures introduced in Section \ref{sec:equilibrium} and show that the kink/antikink correlations are bounded by the inverse of the distance squared in order to deduce \eqref{eq:varkinks}.

Let us first fix $M$ and $N$, the sizes of the torus, and a slope $\rho = (\rho_1,\rho_2) \in \R \times (-1,0)$. The existence of a stationary height profile $\varphi_{M,N,\rho}$ (with value fixed e.g to $0$ at the origin) whose average slope approaches $\rho$ was already discussed in Section \ref{sec:equilibrium}. The height function (and in particular the kinks and antikinks)  are totally determined by the occupation variables $\eta(x,y)$ for $(x,y)\in \R \times \Z$ that take value $1$ if there is a level line of the height function passing by $(x,y)$ (i.e if $\varphi_{M,N,\rho}(x,y+1) - \varphi_{M,N,\rho}(x,y) = -1$) and $0$ otherwise. In \cite{prahoferthesis}, the author showed, that any moments of the occupation variables can be computed thanks to a determinant: for any $(x_1,y_1),\cdots,(x_m,y_m) \in \R \times \Z$,
\begin{equation}
	\Esp \left[ \eta(x_1,y_1) \cdots \eta(x_m,y_m)  \right] = \det \left(S_{M,N,\rho}(x_k,y_k;x_l,y_l)\right)_{1\leq k,l \leq m},
\end{equation}
where $S_{M,N,\rho}$ is an explicit kernel that somehow simplifies in the infinite volume limit:
\begin{equation}
	\lim_{N \to \infty} \lim_{M \to \infty} S_{M,N,\rho}(x',y';x,y) =
	\left\{
	\begin{aligned}
		& \frac{1}{2\pi} \int_{-\pi \rho_2}^{\pi \rho_2} e^{(x'-x) \eps(k)} e^{i(y'-y)k} \d k && & \text{for} && & x' \geq x \\
		& - \frac{1}{2\pi} \int_{\pi \rho_2}^{2\pi - \pi \rho_2}  e^{(x'-x) \eps(k)} e^{i(y'-y)k} \d k && & \text{for} && & x' < x,
	\end{aligned}
	\right.
\end{equation}
with $\eps(k) = - \eta_s \cos(k) + i \eta_a \sin(k)$ and where $\eta_s >0$, $\eta_a \in \R$ are parameters uniquely determined by $\rho$. In particular, the law of $\varphi_{M,N,\rho}$ admits
an infinite volume limit in the sense that the average of any local function
has a limit as $N\to\infty$ after $M\to\infty$.

Thanks to this determinental structure, Prähofer and Spohn computed the infinite volume limit of the densities of kinks and antikinks and deduced the speed of growth $v(\rho)$ (defined in \eqref{eq:speedfunction}) depending on the slope $\rho$. Furthemore, they computed the covariance (or "structure function") between kinks, antikinks and occupation variables (see  \cite[Equation (6.30)]{prahoferthesis} and \cite[Equation (27) and
(29)]{prahofer1997exactly}). For our purposes, we only need the antikink/antikink and kink/kink covariances between the origin and $(x,y)$ denoted respectively by $S^+_{\rho}(x,y)$ and $S^-_{\rho}(x,y)$ and which can be written as:
\begin{equation}\label{eq:structure_function}
	S^{\pm}_{\rho}(x,y) =  \frac{\eta_{\pm}^2}{(2\pi)^2} \underbrace{\int_{-\rho_2 \pi}^{\rho_2 \pi} e^{|x| \eps(k)}e^{i(\frac{x}{|x|}y \pm 1)k} \d k}_A \times \underbrace{\int_{\rho_2 \pi}^{2\pi -\rho_2 \pi} e^{-|x|\eps(k')}e^{i(\frac{x}{|x|}y \pm 1)k'} \d k'}_B,
\end{equation}
where $\eta_{\pm}$ are positive constants determined by $\rho$. Let us show that
\begin{equation}\label{eq:bound_struct_func}
	S^{\pm}_{\rho}(x,y) = \underset{\|(x,y)\| \to \infty}{\mathrm{O}}\left(\frac{1}{\|(x,y)\|^2}\right).
\end{equation}
Without loss of generality, let us treat the case of $S^+$ and $x \geq 0$. First of all, the modulus of $A$ in \eqref{eq:structure_function} is bounded by $2 \int_{0}^{\rho_2 \pi} e^{-\eta_s x \cos(k) } \d k$ whose asymptotic behavior for large $x$ only depends on the behavior of the integrand around $\rho_2 \pi$ where it attains its maximum. Therefore, by a Taylor approximation, we get that for all $(x,y)$,
\begin{equation}\label{eq:bound_mod_struct}
	|A| \leq 2\int_0^{+\infty} e^{-(\eta_s\cos(\rho_2 \pi)-k/C)x} \d k = 2C \, \frac{e^{-\eta_s\cos( \rho_2 \pi) x}}{x} ,
\end{equation}
for some  constant $C>0$. Now, by integration by parts, we get that
\begin{align*}
A  &= \frac{1}{i(y+1)} \left(\left[ e^{x \eps(k)}e^{i(y + 1)k}\right]_{-\rho_2 \pi}^{\rho_2 \pi} - x \int_{-\rho \pi}^{\rho \pi} (\eta_s \sin(k)+i\eta_a \cos(k)) e^{x \eps(k)}e^{i(y + 1)k} \d k \right)
\end{align*}
and thus, by using \eqref{eq:bound_mod_struct} to bound the second term, we obtain
\begin{equation}
|A|  \leq \frac{1}{|y+1|} \left( 2 e^{-\eta_s\cos( \rho_2 \pi) x} + x \,  \sqrt{\eta_s^2 + \eta_a^2} \, 2C\, \frac{e^{-\eta_s\cos( \rho_2 \pi) x}}{x}\right) \leq C' \frac{e^{-\eta_s\cos( \rho_2 \pi) x}}{|y|},
\end{equation}
for some constant $C'>0$. In any case, we have  that
\begin{equation}
|A| = \underset{\|(x,y)\|\to \infty}{\bigo} \left( \frac{e^{-\eta_s\cos( \rho_2 \pi) |x|}}{\max(|x|,|y|)} \right),
\end{equation}
and similar computations show that
\begin{equation}
|B| = \underset{\|(x,y)\|\to \infty}{\bigo} \left( \frac{e^{\eta_s\cos( \rho_2 \pi) |x|}}{\max(|x|,|y|)} \right),
\end{equation}
which concludes the proof of \eqref{eq:bound_struct_func}, by equivalence of norms on $\R^2$.

Now, let us show how we can deduce \eqref{eq:varkinks}. The variance of the number of antikinks/kinks in the domain $\Lambda_R$ is given by:
\[
\lim_{N \to \infty} \lim_{M \to \infty} \Var( N^{\pm}_{M,N,\rho}(\Lambda_R) ) = \int_{[-R,R]^2} \sum_{y,y' \in \llbracket -R, R \rrbracket } S^{\pm}_{\rho}(x'-x,y'-y) \d x \d x'.
\]
By standard approximation of sums by integrals arguments and by \eqref{eq:bound_struct_func}, the proof of \eqref{eq:varkinks} is concluded thanks to the following inequality:
\[
\int_{[-R,R]^4} \frac{C_1}{\|(x'-x,y'-y)\|^2} \vee M \d x  \d y \d x' \d y' \leq \int_{[-R,R]^2} C_2 \,\log R \d x \d y \leq C_2 \, R^2 \log R,
\]
where $M$ is the sup norm of $S^+_{\rho}$ and $C_1,C_2>0$ are constants chosen large enough.

\section{Longest light-chain of Poisson points}\label{sec:lis}

In this section we give a control on the maximal length of Poisson points in a domain that can be collected by a light-path (as in Definition \ref{defi:light}). Let $\omega \in \Omega$, $k \in \N$, $\ubar{y}=(y_1,\cdots,y_k) \in \Z^k$ and $D$ a bounded domain of $\R^2$. We define the event
\begin{equation}
C^{\uparrow}_{\omega,\ubar{y}}(D) := \left\{ \omega \in \Omega, \ \exists (x_i,t_i)_{1 \leq i \leq k} \in \prod_{i=1}^k \left( \omega_{y_i} \cap D \right), \ \forall i \in \llbracket 1, k-1 \rrbracket \ |x_{i+1}-x_i| \leq t_{i+1}-t_i \right\},
\end{equation}
which means that there exists a light-path that collects at least one point per
set $\omega_{y_i} \cap D$ in a precise order (from $i=1$ to $i=k$). The link
with $L^{\uparrow}$ of Definition \ref{defi:light}  is the following. If
$\ubar{y} = (y,\cdots,y)$ where $y\in
\Z$ appears $k$ times, then
\[
\left\{L^{\uparrow} \left( \omega_y \cap D \right) \geq k \right\} = C^{\uparrow}_{\omega,\ubar{y}}(D).
\]
The next Lemma gives a control on the probability of this event when $D$ is a
light-rectangle (see Definition \ref{defi:light}).

\begin{lem}\label{lem:lisr}
	For any light-rectangle $R \subseteq \R^2$, any $k \in \N$ and any
	$\ubar{y}=(y_1,\cdots,y_k) \in \Z^k$,
	\[
	\Pro\left(C^{\uparrow}_{\omega,\ubar{y}}(R) \right) \leq
	\left(\frac{2e^2\,  \Leb(R)}{k^2}\right)^k.
	\]
\end{lem}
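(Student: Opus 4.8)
The plan is to reduce the event $C^{\uparrow}_{\omega,\ubar{y}}(R)$ to a statement about an ordered $k$-tuple of Poisson points lying inside $R$ and satisfying the light-cone compatibility constraint $|x_{i+1}-x_i|\le t_{i+1}-t_i$, and then to bound its probability by a first-moment (union-bound over configurations) computation. Writing $\lambda$ for the intensity of $\omega$ (here $\lambda=2$) and noting that on each line $y_i$ the point set $\omega_{y_i}$ is a Poisson process on $\R^2$ of intensity $\lambda$, the event $C^{\uparrow}_{\omega,\ubar{y}}(R)$ is contained in the event that one can \emph{choose} points $p_i=(x_i,t_i)\in\omega_{y_i}\cap R$, $i=1,\dots,k$, forming an increasing light-chain. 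By the multivariate Mecke formula (or simply by expanding factorial moments of a Poisson process), for any lines $y_1,\dots,y_k$ — whether or not they are distinct — the expected number of such ordered $k$-tuples is
\[
\Esp\Big[\#\{(p_1,\dots,p_k): p_i\in\omega_{y_i}\cap R,\ \text{light-chain}\}\Big]
= \lambda^{k}\int_{R^{k}} \1\{\text{$(p_1,\dots,p_k)$ is an increasing light-chain}\}\, \d p_1\cdots\d p_k .
\]
Since $C^{\uparrow}_{\omega,\ubar{y}}(R)$ occurs iff this count is $\ge 1$, Markov's inequality gives $\Pro(C^{\uparrow}_{\omega,\ubar{y}}(R))\le \lambda^{k}\, V_k$, where $V_k$ denotes the above volume integral.

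The main task is then the geometric estimate $V_k\le (e^2\Leb(R)/k^2)^k$ (so that with $\lambda=2$ one gets the claimed $(2e^2\Leb(R)/k^2)^k$). First I would rotate coordinates by $45^\circ$ so that the two light-cone directions become the coordinate axes: set $u=(t+x)/\sqrt2$, $w=(t-x)/\sqrt2$; the constraint $|x_{i+1}-x_i|\le t_{i+1}-t_i$ becomes exactly $u_i\le u_{i+1}$ and $w_i\le w_{i+1}$, i.e.\ the chain must be coordinatewise nondecreasing in the new coordinates, and $R$ becomes an axis-parallel rectangle $[0,a]\times[0,b]$ with $ab=\Leb(R)$ (the Jacobian of the rotation is $1$). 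Thus
\[
V_k=\int_{([0,a]\times[0,b])^k}\1\{u_1\le\cdots\le u_k,\ w_1\le\cdots\le w_k\}\ \prod_i \d u_i\,\d w_i
=\frac{a^k}{k!}\cdot\frac{b^k}{k!}=\frac{\Leb(R)^k}{(k!)^2},
\]
because the $u$-coordinates and $w$-coordinates decouple and each monotone-ordering constraint on $k$ uniform points in an interval of length $a$ (resp.\ $b$) contributes a factor $1/k!$. Finally, Stirling's bound $k!\ge (k/e)^k$ gives $V_k\le (e^2\Leb(R)/k^2)^k$, and multiplying by $\lambda^k=2^k$ finishes the proof.

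The only delicate point is the passage from ``there exist such points in $\omega_{y_i}$'' to the first-moment bound when the lines $y_i$ are not all distinct: if $y_i=y_j$ for $i\neq j$ one is choosing several points from the \emph{same} Poisson process and must either require them distinct (the relevant case, since a light-chain visits distinct points) or allow repetitions. I would handle this by noting that the factorial-moment identity for a single Poisson process, $\Esp[\,(N)_m\,]=\lambda^m (\Leb)^m$ for the number of ordered $m$-tuples of \emph{distinct} points, combined with independence across distinct lines, yields exactly the integral $\lambda^k V_k$ above as an upper bound for the expected number of ordered light-chains with points in the prescribed lines; a light-chain with $u_i\le u_{i+1}$, $w_i\le w_{i+1}$ and all points distinct automatically has the $p_i$ pairwise distinct, so Markov applies. (If one instead wants to be fully elementary, one can condition on the total number of points of $\omega$ in $R$ on each line, use that conditionally they are i.i.d.\ uniform, and sum the resulting binomial-type series, arriving at the same bound; I would mention this as the alternative but carry out the Mecke-formula version.)
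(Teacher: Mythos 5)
Your proposal is correct and follows essentially the same route as the paper: the paper's proof is exactly this first-moment/union bound over ordered configurations (written informally as an integral over infinitesimal boxes rather than via the Mecke formula), followed by the same $45^\circ$ rotation reducing light-chains to coordinatewise nondecreasing chains in $[0,a]\times[0,b]$, the computation $(2ab)^k/(k!)^2$, and the bound $k!\ge (k/e)^k$. The subtlety you raise about repeated lines $y_i=y_j$ is handled no more carefully in the paper (which simply treats the $\omega_{y_i}$ as independent and integrates over the a.e.\ strictly ordered simplex), so your discussion of it is, if anything, more explicit than the original.
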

\begin{proof}
	This probability is invariant by translation of $R$ and up to a rotation of angle
	$-\pi/4$, we can suppose that $R=[0,a]\times[0,b]$ and that where are
	considering non-decreasing paths instead of light-path in the definition of
	$C^{\uparrow}$. Therefore, by the union bound inequality,
	\begin{align*}
	\Pro\left(C^{\uparrow}_{\omega,\ubar{y}}(R) \right)
	&= \Pro\left( \exists (r_i,s_i)_{1 \leq i \leq k} \in \prod_{i=1}^k \omega_{y_i}, \ 0 \leq r_1
	\leq \cdots \leq r_k \leq a,\; 0 \leq s_1 \leq \cdots \leq s_k \leq b
	\right) \\
	& \leq \int_{0 \leq r_1 \leq \cdots \leq r_k \leq a} \int_{0 \leq s_1 \leq
		\cdots \leq s_k \leq b} \Pro \left( \bigcap_{i=1}^k \# \left\{\omega_{y_i}
		\cap
	[r_i,r_i+\mathrm{d}r_i] \times [s_i, s_i + \d{s_i}] \right\}=1
	\right) \\
	&\leq \int_{0 \leq r_1 \leq \cdots \leq r_k \leq a} \int_{0 \leq s_1 \leq
	\cdots \leq s_k \leq b} 2^k \d{r_1} \cdots \d{r_k} \d{s_1} \cdots \d{s_k}\\
	& \hspace{1cm} \text{(since the $\omega_{y_i}$ are independent PPPs of
	intensity $2$ on $\R \times \Z \times \R_+$)}\\
	& = \frac{(2ab)^k}{(k !)^2} \leq  \left(\frac{2e^2\,
	\Leb(R)}{k^2}\right)^k.
	\end{align*}
	In the last inequality, we used that $k ! \geq (k/e)^k$ valid for all $k\in
	\N$
	(this classical inequality can be obtained from $e^x \geq x^k/(k!)$
	evaluated
	at $x=k$).
\end{proof}
Now we give a Corollary that can be useful when dealing with domains different
from light-rectangles (the upper bound obtain is not optimal, yet enough for our purposes).
\begin{coro}\label{coro:lis}
	For any domain $D \subseteq \R^2$, any $k \in \N$ and any $\ubar{y}=(y_0,\cdots,y_k) \in \Z^{k+1}$,
	\[
	\Pro\left(C^{\uparrow}_{\omega,\ubar{y}}(D) \right) \leq  2 \, \Leb(D) \,
	\left(\frac{4e^2\, \mathrm{v}(D)^2 }{k^2}\right)^k,
	\]
	where $\mathrm{v}(D)$ is the vertical diameter of $D$ i.e the longest
	distance between two points in $D$ aligned vertically.
\end{coro}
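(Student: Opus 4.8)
The plan is to reduce the statement to Lemma~\ref{lem:lisr} by showing that, on the event $C^{\uparrow}_{\omega,\ubar{y}}(D)$, the whole collected light-chain is contained in a single light-rectangle of area at most $2\,\mathrm{v}(D)^2$ whose bottom vertex ranges over $D$, and then by running the same first-moment computation as in the proof of Lemma~\ref{lem:lisr} with one extra integration over that bottom vertex.

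First I would record an elementary geometric fact. Let $(x_i,t_i)_{0\le i\le k}$ be a light-chain with $(x_i,t_i)\in D$ for all $i$ and $|x_{i+1}-x_i|\le t_{i+1}-t_i$ for $i\in\llbracket 0,k-1\rrbracket$; in particular $t_0\le t_1\le\cdots\le t_k$. Telescoping the light-constraints gives $|x_i-x_0|\le t_i-t_0$ for every $i$, and since $(x_0,t_0),(x_i,t_i)\in D$ we also have $t_i-t_0\le\mathrm{v}(D)$. Set $\hat R(x_0,t_0):=R_{(x_0,t_0),\,(x_0,\,t_0+2\mathrm{v}(D))}$. After the rotation by $-\pi/4$ used in the proof of Lemma~\ref{lem:lisr} one checks that the truncated forward light cone $\{(x,t):\ t_0\le t\le t_0+\mathrm{v}(D),\ |x-x_0|\le t-t_0\}$ is exactly the lower half of the (square) light-rectangle $\hat R(x_0,t_0)$, so $(x_i,t_i)\in\hat R(x_0,t_0)$ for all $i$; and by Remark~\ref{rem:lightrectangle} the area of $\hat R(x_0,t_0)$ is $\tfrac12\big((2\mathrm{v}(D))^2-0\big)=2\,\mathrm{v}(D)^2$, independently of $(x_0,t_0)$.

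Consequently, on $C^{\uparrow}_{\omega,\ubar{y}}(D)$ there exist $(x_0,t_0)\in\omega_{y_0}\cap D$ and $(x_i,t_i)\in\omega_{y_i}\cap\hat R(x_0,t_0)$ for $i\in\llbracket 1,k\rrbracket$ with $|x_{i+1}-x_i|\le t_{i+1}-t_i$ for $i\in\llbracket 1,k-1\rrbracket$ (I simply drop the constraint linking $(x_0,t_0)$ to $(x_1,t_1)$, which only enlarges the event). I would then bound the probability of this enlarged event by the first-moment/union-bound integral, exactly as in the proof of Lemma~\ref{lem:lisr}:
\[
\Pro\big(C^{\uparrow}_{\omega,\ubar{y}}(D)\big)\ \le\ \int_{(x_0,t_0)\in D}2\,\d{x_0}\,\d{t_0}\ \int_{\substack{(x_i,t_i)\in\hat R(x_0,t_0),\ 1\le i\le k\\ |x_{i+1}-x_i|\le t_{i+1}-t_i}}2^{k}\,\prod_{i=1}^{k}\d{x_i}\,\d{t_i}\ .
\]
The outer integral equals $2\,\Leb(D)$, while the inner integral is precisely the one estimated in the proof of Lemma~\ref{lem:lisr} applied to the light-rectangle $\hat R(x_0,t_0)$ (the light-ordering becomes coordinatewise monotonicity after rotation), hence it is at most $\big(2e^2\,\Leb(\hat R(x_0,t_0))/k^2\big)^k=\big(4e^2\,\mathrm{v}(D)^2/k^2\big)^k$, independently of $(x_0,t_0)$. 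Multiplying the two estimates gives the claimed bound.

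The one genuinely delicate point is that the entries of $\ubar{y}$ need not be distinct (indeed in the application to Proposition~\ref{prop:prolinGW} consecutive entries may be equal), so one cannot ``peel off'' the point $(x_0,t_0)$ via the Mecke formula and then quote Lemma~\ref{lem:lisr} for the remaining $k$-chain, because an added point on line $y_0$ could be reused by that sub-chain. Performing the single simultaneous integration over all $k+1$ points, as above, sidesteps this: the bound $\Pro\big(\bigcap_{i=0}^{k}\{\#(\omega_{y_i}\cap B_i)=1\}\big)\le\prod_{i=0}^{k}2\,\Leb(B_i)$ on infinitesimal boxes $B_i$ around the $(x_i,t_i)$ uses only the independence of the Poisson process over disjoint regions and therefore holds regardless of coincidences among the $y_i$, while coincidences among the points $(x_i,t_i)$ occur on a Lebesgue-null set and can be ignored in the integral.
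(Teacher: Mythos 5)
Your proof is correct and follows essentially the same route as the paper's: contain the truncated forward light-cone issued from the first point $(x_0,t_0)$ of the chain in the light-square $R_{(x_0,t_0),(x_0,t_0+2\mathrm{v}(D))}$ of area $2\,\mathrm{v}(D)^2$, then integrate over the position of that first point to pick up the factor $2\,\Leb(D)$. The only real difference is organizational: the paper peels off $(x_0,t_0)$ by a union-bound/Mecke step and then quotes Lemma \ref{lem:lisr} verbatim for the remaining $k$-chain inside $R_{x_0,t_0}$, whereas you run a single simultaneous first-moment integral over all $k+1$ points. Your version is marginally more careful on the point you flag: when some $y_i$ coincides with $y_0$ (which does happen in the application to Proposition \ref{prop:prolinGW}, where only $|y_{i+1}-y_i|\le 1$ is imposed), the added point of the Mecke formula could in principle be reused by the sub-chain, so quoting Lemma \ref{lem:lisr} for $\omega$ rather than $\omega\cup\{(x_0,y_0,t_0)\}$ requires a (short) extra argument; the simultaneous integration avoids this entirely and yields the same constant. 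One remark: your step ``$t_i-t_0\le\mathrm{v}(D)$ since $(x_0,t_0),(x_i,t_i)\in D$'' is exactly the geometric reduction the paper also makes; under the literal definition of $\mathrm{v}(D)$ it implicitly uses that the vertical diameter controls the full $t$-extent of $D$, which holds for the trapezoid $T_{R,t}$ to which the corollary is applied, so this is a shared (and harmless, in context) imprecision rather than a gap in your argument.
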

\begin{proof}
In order to realise the event $C^{\uparrow}_{\omega,\ubar{y}}(D)$, once we have
chosen $(x_0,t_0) \in \omega_{y_0} \cap D$, then the rest of the points
$(x_1,t_1)\cdots(x_k,t_k)$ must be in the intersection between $D$ and the cone
$\{ (x,t), \ |x-x_0| \leq t-t_0\}$ which is included in a certain light-square
denoted $R_{x_0,t_0}$ whose diagonal is of length less than $2\mathrm{v}(D)$,
hence is of area less than $2 \mathrm{v}(D)^2$.
By the union bound inequality and Lemma \ref{lem:lisr},
\begin{align*}
\Pro\left(C^{\uparrow}_{\omega,\ubar{y}}(D) \right) & \leq \int_{D}
\Pro\left(C^{\uparrow}_{\omega,(y_1\cdots y_k)}(R_{x_0,t_0}) \right)
2\mathrm{d}x_0\mathrm{d}t_0\\
& \leq \int_{D}
	\left(\frac{2e^2\,  2 \mathrm{v}(D)^2}{k^2}\right)^k
	2\mathrm{d}x_0\mathrm{d}t_0 = 2 \, \Leb(D) \,
		\left(\frac{4e^2\, \mathrm{v}(D)^2 }{k^2}\right)^k.
\end{align*}
\end{proof}

\section{Compactness for asymptotically continuous functions}\label{sec:compact}
In this section, we show a generalisation of Arzelà-Ascoli theorem,
that gives sufficient conditions for ``almost continuous functions''
(e.g.  sequences of functions with jumps of size tending to $0$) to
converge uniformly on all compact sets.
\begin{prop}\label{prop:ascoli}
	Let $(f_n)_{n \in \N}$ be a sequence of functions from a separable metric
	space $(E,d)$ to a complete metric space $(F,d')$ such that:
	\begin{enumerate}
		\item \emph{Asymptotic equi-continuity}: For all $x\in E$ and all
		$\eps>0$, there exists $\delta>0$ such that
		\begin{equation}\label{eq:hypasymequi}
		\limsup_{n \to \infty}
		\sup_{\substack{y \in E \\ d(x,y) \leq \delta}}d'(f_n(x),f_n(y)) \leq
		\eps.
		\end{equation}
		\item \emph{Pointwise relative compactness}: For all $x \in E$,
		the sequence $(f_n(x))_{n \in \N}$ is contained in a compact set of $F$.
	\end{enumerate}
	Then, for any subsequence $(n_k)_{k \in \N}$, $(f_{n_k})_{k \in \N}$ has a
	subsequence that converges uniformly on all compact subsets of $E$ to a
	function $f:(E,d)\mapsto (F,d')$. Moreover, any limit point is continuous.
\end{prop}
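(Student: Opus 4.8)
The plan is to adapt the classical Arzelà--Ascoli argument, the only twist being that the $f_n$ need not be continuous, only ``asymptotically'' so in the sense of \eqref{eq:hypasymequi}. First I would fix a countable dense subset $\{x_j\}_{j\in\N}$ of $(E,d)$. Since $(F,d')$ is complete and, by pointwise relative compactness, each sequence $(f_n(x_j))_{n\in\N}$ lies in a compact subset of $F$, a Cantor diagonal extraction applied to the given subsequence $(n_k)_{k\in\N}$ yields a further subsequence, which I do not relabel, along which $f_{n_k}(x_j)$ converges in $F$ for every $j\in\N$.

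Next I would upgrade this to convergence at \emph{every} point of $E$ and establish continuity of the limit. Given $x\in E$ and $\eps>0$, asymptotic equi-continuity at $x$ provides $\delta>0$ with $\limsup_k \sup_{d(x,y)\le\delta}d'(f_{n_k}(x),f_{n_k}(y))\le\eps$. Picking a dense point $x_j$ with $d(x,x_j)\le\delta$ and writing
\[
d'(f_{n_k}(x),f_{n_l}(x))\le d'(f_{n_k}(x),f_{n_k}(x_j))+d'(f_{n_k}(x_j),f_{n_l}(x_j))+d'(f_{n_l}(x_j),f_{n_l}(x)),
\]
the first and last terms are $\le 2\eps$ for $k,l$ large while the middle one is $\le\eps$ for $k,l$ large (as $(f_{n_k}(x_j))_k$ converges, hence is Cauchy); thus $(f_{n_k}(x))_k$ is Cauchy and converges in the complete space $F$. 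I set $f(x):=\lim_k f_{n_k}(x)$. Continuity of $f$ follows from the same estimate: for $d(x,y)\le\delta$, continuity of the metric gives $d'(f(x),f(y))=\lim_k d'(f_{n_k}(x),f_{n_k}(y))\le\limsup_k\sup_{d(x,y')\le\delta}d'(f_{n_k}(x),f_{n_k}(y'))\le\eps$. Since any uniform-on-compacts subsequential limit of $(f_n)$ is produced exactly this way, this also proves the last assertion of the proposition.

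Finally I would prove uniform convergence of $(f_{n_k})_k$ on a fixed compact $K\subseteq E$. Given $\eps>0$, for each $x\in K$ choose $\delta_x>0$ from asymptotic equi-continuity at $x$ and extract a finite subcover $K\subseteq\bigcup_{i=1}^m \mathcal{B}(x_i,\delta_{x_i})$. There exists $k_0$ such that for all $k\ge k_0$ and all $i\le m$ simultaneously one has $\sup_{d(x_i,y)\le\delta_{x_i}}d'(f_{n_k}(x_i),f_{n_k}(y))\le 2\eps$ by \eqref{eq:hypasymequi}, $d'(f_{n_k}(x_i),f(x_i))\le\eps$ by pointwise convergence at the finitely many points $x_i$, and $d'(f(x_i),f(y))\le\eps$ for $d(x_i,y)\le\delta_{x_i}$ by the continuity estimate above. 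For arbitrary $y\in K$, choosing $i$ with $d(x_i,y)\le\delta_{x_i}$ and applying the triangle inequality to $f_{n_k}(y),f_{n_k}(x_i),f(x_i),f(y)$ gives $d'(f_{n_k}(y),f(y))\le 4\eps$ uniformly in $y\in K$ for all $k\ge k_0$; letting $\eps\to0$ yields uniform convergence on $K$.

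The step requiring the most care is the passage from the pointed $\limsup$ hypothesis \eqref{eq:hypasymequi} to a genuinely uniform statement: that hypothesis is not a uniform-in-$n$ equicontinuity bound, so each inequality it produces holds only for $k$ beyond a threshold depending on the point and on $\eps$. One must therefore be scrupulous with the order of quantifiers --- first fix the finite cover, then a single threshold $k_0$ valid for all of its (finitely many) centres, and only afterwards let $y$ range over $K$ --- in order not to accidentally invoke a uniform bound that is not available.
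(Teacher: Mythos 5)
Your proof is correct and follows essentially the same route as the paper's: diagonal extraction over a countable dense set, a Cauchy argument at arbitrary points using completeness and the asymptotic equi-continuity, continuity of the limit by passing to the limit in \eqref{eq:hypasymequi}, and uniform convergence on compacts via a finite cover with a single threshold valid for all the finitely many centres. Your closing remark about the order of quantifiers is exactly the point where care is needed, and you handle it the same way the paper does.
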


\begin{proof}
	For the sake of simplicity and since any subsequence $(f_{n_k})_{k \in \N}$
	still satisfies assumptions 1
	and
	2, we can assume that $(f_{n_k})_{k \in \N} =
	(f_{n})_{n \in \N}$ without loss of generality.

	Let $E_0$ be a dense countable subset of $E$. By pointwise relative
	compactness and
	a diagonal extraction argument, we can find a subsequence
	$(n_l)_{l \in \N}$ such that for every $x\in E_0$,
	$(f_{n_l}(x))_{l \in \N}$ converges in $F$.  Let us show that
	actually, for all $x\in E$, $(f_{n_l}(x))_{l \in \N}$ is a
	Cauchy sequence, hence converges in $F$. Let $x \in E$ and
	$\eps>0$. By assumption, there exists $\delta>0$ such that
	\eqref{eq:hypasymequi} is satisfied. By density, we can find
	$x_0 \in E_0$ such that $d(x,x_0)\leq \delta$. As
	$(f_{n_l}(x_0))_{l \in \N}$ converges, it is a Cauchy sequence
	so for all $l,m$ large enough,
	$d'(f_{n_l}(x_0),f_{n_{m}}(x_0)) \leq \eps$ and thus
	\begin{align*}
	d'(f_{n_l}(x),f_{n_{m}}(x))	&\leq d'(f_{n_l}(x),f_{n_{l}}(x_0)) +
	d'(f_{n_l}(x_0),f_{n_{m}}(x_0)) +
	d'(f_{n_m}(x_0),f_{n_{m}}(x))\\
	& \leq 3
	\eps ,
	\end{align*}
	for $l,m$ large enough by \eqref{eq:hypasymequi}. Let us call $f$ the
	pointwise limit. By taking the limit in
	\eqref{eq:hypasymequi}, we get
	immediately that any such limit point is continuous.

	Now, let $K$ be a compact subset of $E$ and let us show that $f_{n_l}$
	converges to $f$ uniformly on $K$. Let $\eps>0$. By compactness and
	asymptotic equi-continuity assumption, we can find a
	covering of $K$ by a finite number $p \in \N$ of balls of centers
	$x_1,\cdots x_p$ and radius
	$\delta_1,\cdots,\delta_p$ such that \eqref{eq:hypasymequi} is satisfied
	with $(x,\delta)=(x_i,\delta_i)$ for any $i \in  \{1,\cdots p\}$. Therefore, we
	can find $N \in \N$ such that for all $l \geq N$,
	\begin{equation}\label{eq:distance_balls}
		 \forall i \in
	\{1,\cdots,p\} \quad \forall y \in \mathcal{B}(x_i,\delta)
	\qquad
	d'(f_{n_l}(x_i),f_{n_l}(y)) \leq \eps .
	\end{equation}
	 Moreover,
	by point-wise convergence we can assume that for all $l \geq N$ and all $i
	\in \{1, \cdots, p \}$,
	$d'(f_{n_l}(x_i),f(x_i)) \leq \eps$. Therefore, for all
	$l \geq N$ and all $y \in K$, if we choose the index $i$ such that $d(y,x_i) \leq \delta_i$, then
	\begin{align*}
	d'(f_{n_l}(y),f(y)) & \leq d'(f_{n_l}(y),f_{n_l}(x_i)) +
	d'(f_{n_l}(x_i),f(x_i)) +d'(f(x_i),f(y))\\
	&  \leq 3 \, \eps,
	\end{align*}
	where we used \eqref{eq:distance_balls} and point-wise convergence in the last inequality.
\end{proof}

\section*{Acknowledgement}
I would like to thank my advisor Fabio Toninelli for introducing me to the
domain of hydrodynamic limits, for frequent discussions and for very careful
proofreading. I also want to thank Xufan Zhang for explaining details about his
article~\cite{zhang2018domino}.
This work was partially supported by  ANR-15-CE40-0020-03 Grant LSD.

%=======================================================================

\bibliographystyle{plain}
\bibliography{biblio}

\Addresses

\end{document}